\DeclareMathAlphabet{\mathpzc}{OT1}{pzc}{m}{it}
\theoremstyle{plain}
\newtheorem{thm}{Theorem}[section] 
\theoremstyle{definition}
\newtheorem{defn}[thm]{Definition} 
\newtheorem{lem}[thm]{Lemma}
\newtheorem{prop}[thm]{Proposition}
\newtheorem{rem}[thm]{Remark}
\newtheorem{cor}[thm]{Corollary}
\def\XXint#1#2#3{{\setbox0=\hbox{$#1{#2#3}{\int}$ }
		\vcenter{\hbox{$#2#3$ }}\kern-.6\wd0}}
\newcounter{MPequ}
\newcounter{AppA}
\newenvironment{AppA}
{\stepcounter{AppA}%
	\addtocounter{equation}{0}%
	\equation}
{\endequation}
\begin{document}\selectlanguage{english}
\begin{center}
\normalsize \textbf{\textsf{Asymptotic windings, surface helicity and their applications in plasma physics}}
\end{center}
\begin{center}
	Wadim Gerner\footnote{\textit{E-mail address:} \href{mailto:wadim.gerner@inria.fr}{wadim.gerner@inria.fr}}
\end{center}
\begin{center}
{\footnotesize	Sorbonne Universit\'e, Inria, CNRS, Laboratoire Jacques-Louis Lions (LJLL), Paris, France}
\end{center}
{\small \textbf{Abstract:} 
In [J. Cantarella, J. Parsley, J. Geom. Phys. 60:1127 (2010)] Cantarella and Parsley introduced the notion of submanifold helicity. In the present paper we investigate properties of surface helicity and in particular answer two open questions posed in the aforementioned work: (i) We give a precise mathematically rigorous physical interpretation of surface helicity in terms of linking of distinct field lines. (ii) We prove that surface helicity is non-trivial if and only if the underlying surface has non-trivial topology (i.e. at least one hole).

We then focus on toroidal surfaces which are of relevance in plasma physics and express surface helicity in terms of average poloidal and toroidal windings of the individual field lines of the underlying vector field which enables us to provide a connection between surface helicity and rotational transform. Further, we show how some of our results may be utilised in the context of coil designs for plasma fusion confinement devices in order to obtain coil configurations of particular "simple" shape.

Lastly, we consider the problem of optimising surface helicity among toroidal surfaces of fixed area and show that toroidal surfaces admitting a symmetry constitute global minimisers.
\newline
\newline
{\small \textit{Keywords}: Submanifold helicity, Surface helicity, Biot-Savart operator, Plasma physics, Coil winding surface, Rotational transform}
\newline
{\small \textit{2020 MSC}: 14J81, 37E35, 49Q10, 78A30, 78A55}
\section{Introduction}
	In the realm of $3$-dimensional (magneto-)hydrodynamics there is a well-known preserved physical quantity called helicity. To set the stage, we let $\Omega\subset\mathbb{R}^3$ be a bounded smooth domain and we let $u$ be a smooth vector field which is divergence-free on $\Omega$ and tangent to its boundary. The $3$-d Biot-Savart helicity of $u$ may then be defined as follows
	\begin{gather}
		\nonumber
		\mathcal{H}(u):=\frac{1}{4\pi}\int_{\Omega\times \Omega}u(x)\cdot\left(u(y)\times \frac{x-y}{|x-y|^3}\right)d^3yd^3x.
	\end{gather}
	An alternative way to express helicity consists of defining the Biot-Savart potential of $u$ by \begin{gather}
		\nonumber
		\operatorname{BS}_{\Omega}(u)(x):=\frac{1}{4\pi}\int_{\Omega}u(y)\times \frac{x-y}{|x-y|^3}d^3y
	\end{gather}
	so that $\mathcal{H}(u)$ is just the $L^2$-inner product of $u$ and $\operatorname{BS}_{\Omega}(u)$, c.f. \cite{CDG01}. From a physical perspective helicity may be regarded as a measure of the average linking of distinct field lines of the underlying vector field $u$, see \cite{M69},\cite{Arnold2014},\cite{V03}. The relevance of helicity is that it is preserved under the action of volume preserving diffeomorphisms, c.f. \cite[Section 2.3 Corollary]{Arnold2014}, \cite[Theorem A]{CDGT002},\cite[Lemma 4.5]{G20} and in fact is in some sense the only such invariant \cite{EPT16}. In the context of the incompressible Euler equations the velocity field $v$ satisfies the equations
	\begin{gather}
		\nonumber
		\partial_tv+\nabla_vv=\nabla p\text{, }\operatorname{div}(v)=0\text{ in }\Omega\text{, }v\parallel\partial\Omega,
	\end{gather}
	where $p$ denotes the pressure function. Upon applying the curl operator to the first equation one verifies that the vorticity $\omega$ of the fluid, i.e. the curl of the velocity field $v$, is a solution to
	\begin{gather}
		\nonumber
		\partial_t\omega=-[v,\omega],
	\end{gather} 
	where $[\cdot,\cdot]$ denotes the Lie-bracket of vector fields. This implies that $\omega(t,x)=((\psi_t)_*\omega_0)(x)$, where $\psi_t$ denotes the flow of $v$, $(\psi_t)_*$ denotes the pushforward with respect to $\psi_t$ and $\omega_0(x):=\omega(0,x)=\operatorname{curl}(v(0,x))$ is the initial vorticity. Since $v\parallel \partial\Omega$ and $\operatorname{div}(v)=0$ we conclude that $\psi_t$ is well-defined and volume-preserving. We hence conclude from our preceding discussion that the helicity of the vorticity vector field is preserved in time. In the context of ideal magnetohydrodynamics, one can show in a similar spirit that the helicity of the underlying magnetic field is preserved in time. Even more, helicity also provides a lower bound on the $L^2$-norm of the underlying vector field, which in the context of ideal magnetohydrodynamics corresponds to the magnetic energy of the underlying magnetic field, c.f. \cite[Section III Theorem 1.5]{AK21}, \cite[Theorem B]{CDG00}
	\begin{gather}
		\nonumber
		|\mathcal{H}(u)|\leq c(\Omega)\int_{\Omega}|u(x)|^2d^3x
	\end{gather}
where $c(\Omega)>0$ is a constant independent of $u$.
It is further interesting to understand which other diffeomorphisms, beyond the volume-preserving diffeomorphisms, preserve helicity. This question has been investigated in \cite{CP10} and it turns out that in general there exist non-volume preserving diffeomorphisms which preserve helicity.

In \cite[Section 6.1]{CP10} Cantarella and Parsley introduced shortly the notion of submanifold helicity, a special case of which is the surface helicity, which corresponds to the $(0,2,3)$-helicity in the terminology of \cite{CP10}. As we have seen, the notion of $3$-d helicity is an important quantity in the context of fluid dynamics. It is therefore of interest to understand its $2$-d analogue since vector fields tangent to surfaces appear naturally in many applications, some of which we will discuss in the next sections.
\newline
\newline
The main goal of the present manuscript is to illuminate our understanding of surface helicity and establish connections to other known dynamical quantities.
\newline
\newline
\textbf{Outline of the paper:} In the next section we present the main results which we divided in subsections to increase readability. We first discuss answers to some open questions raised in \cite[Section 6.1]{CP10} regarding the surface helicity and provide in particular a general interpretation of surface helicity in terms of linkage of distinct field lines of the underlying vector field. In \Cref{SS22} we provide an alternative interpretation on toroidal surfaces which connects surface helicity to other important dynamical quantities on the torus. Based on this alternative interpretation we connect in \Cref{SS23} the notion of surface helicity to the notion of rotational transform in the context of plasma fusion. We then deal in \Cref{SS24} with the question of optimising surface helicity among toroidal surfaces of prescribed area and prove that toroidal surfaces admitting a symmetry are global minimisers while the existence of global maximisers is stated as an open problem. Finally, in \Cref{SS25} we discuss how some of the results obtained in the present manuscript can be used to construct "simple" surface currents on a coil winding surface which approximate desired target magnetic fields in a given plasma domain, which is of relevance in stellarator plasma fusion confinement devices. The proofs of the theorems are contained in \Cref{S3}.

\section{Main results}
Before we state the main results we establish some notation.
\newline
\newline
\textbf{Notation:} For a given, $C^{1,1}$-regular, closed, connected surface $\Sigma\subset \mathbb{R}^3$ we let $L^p(\Sigma)$, $1\leq p\leq \infty$, denote the standard space of $L^p$-integrable functions on $\Sigma$ and we define the vectorial counterpart by
\begin{gather}
	\nonumber
	L^p\mathcal{V}(\Sigma):=\left\{v\in\left(L^p(\Sigma)\right)^3\mid v(x)\cdot \mathcal{N}(x)=0\text{ a.e. }x\in \Sigma\right\},
\end{gather}
where $\mathcal{N}$ denotes the outward pointing unit normal on $\Sigma$, i.e. the unit normal pointing in the direction of the unbounded component of $\mathbb{R}^3\setminus \Sigma$. In addition we let $C^{0,1}\mathcal{V}(\Sigma)$ denote the Lipschitz-continuous vector fields on $\Sigma$ which are tangent to $\Sigma$ at every point. We say that $v\in L^{p}\mathcal{V}(\Sigma)$ is divergence-free if $\int_{\Sigma} v\cdot \nabla_{\Sigma}\alpha\text{ } d\sigma=0$ for all $\alpha\in C^1(\Sigma)$, where $\nabla_{\Sigma}$ denotes the surface gradient of $\alpha$ which can be computed as $\nabla_{\Sigma}\alpha=\nabla \tilde{\alpha}-(\mathcal{N}\cdot \nabla \tilde{\alpha})\mathcal{N}$, where $\tilde{\alpha}$ is an arbitrary $C^1$-extension of $\alpha$ to $\overline{\Omega}$ and $\nabla$ denotes the standard Euclidean gradient, and where $d\sigma$ denotes the standard surface measure. We denote by $L^p\mathcal{V}_0(\Sigma)$ the subspaces of $L^p\mathcal{V}(\Sigma)$ which consist of divergence-free vector fields. Lastly, we say that $v\in L^p\mathcal{V}(\Sigma)$ is curl-free if $\int_{\Sigma}v\cdot (\nabla_{\Sigma}\alpha\times \mathcal{N})d\sigma=0$ for all $\alpha\in C^1(\Sigma)$.
\subsection{Properties of Surface helicity}
We start by defining the $2$-dimensional analogue of the Biot-Savart operator.
\begin{defn}[Surface Biot-Savart operator]
	\label{D21}
	Let $\Sigma\subset \mathbb{R}^3$ be a closed, connected $C^{1,1}$-surface. Given $1<p<\infty$, the following operator is a well-defined, continuous operator called the \textit{surface Biot-Savart operator}
	\begin{gather}
		\nonumber
		\operatorname{BS}_{\Sigma}:L^p\mathcal{V}(\Sigma)\rightarrow L^q\mathcal{V}(\Sigma)\text{, }v\mapsto \frac{1}{4\pi}\left(\int_{\Sigma}v(y)\times \frac{x-y}{|x-y|^3}d\sigma(y)\right)^\parallel
	\end{gather}
	where $q:=\frac{2p}{2-p}$ if $1<p<2$, where $1\leq q<\infty$ can be taken arbitrary if $p=2$ and $q=\infty$ if $2<p\leq \infty$.
\end{defn}
In order to define the "tangent part" $\cdot^\parallel$ we cover $\Sigma$ by any collection of open domains $U_k$ which support local orthonormal frames $\{E^k_1,E^k_2\}$ of class $C^{0,1}$. On each such $U_k$ we define (we drop the superscript $k$)
\[
\left(\int_\Sigma v(y)\times \frac{x-y}{|x-y|^3}d\sigma(y)\right)^\parallel:=\sum_{i=1}^2\left(\int_{\Sigma}v(y)\times \frac{x-y}{|x-y|^3}d\sigma(y)\cdot E_i(x)\right)E_i(x).
\]
The well-definedness of \Cref{D21} is shown in \Cref{AS1}. In particular we show that the defined vector fields agree on the overlaps of the $U_k$ almost surely and that any two covers $U_k$ and $\tilde{U}_k$ induce the same vector field.
\newline
\newline
We come now to the definition of (cross-)helicity.
\begin{defn}[(Cross-)Helicity]
	\label{D22}
	Let $\Sigma\subset\mathbb{R}^3$ be a connected, closed, $C^{1,1}$-surface. Then we define the \textit{cross-helicity} as follows
	\begin{gather}
		\nonumber
		\mathcal{H}_c:L^{\frac{4}{3}}\mathcal{V}(\Sigma)\times L^{\frac{4}{3}}\mathcal{V}(\Sigma)\rightarrow\mathbb{R}\text{, }(v,w)\mapsto \int_{\Sigma}v(x)\cdot \operatorname{BS}_{\Sigma}(w)(x)d\sigma(x)
		\\
		\nonumber
		=\frac{1}{4\pi}\int_{\Sigma\times \Sigma}v(x)\cdot \left(w(y)\times \frac{x-y}{|x-y|^3}\right)d\sigma(y)d\sigma(x).
	\end{gather}
Further, we define the \textit{surface helicity} of a vector field by
\begin{gather}
	\nonumber
	\mathcal{H}:L^{\frac{4}{3}}\mathcal{V}(\Sigma)\rightarrow\mathbb{R}\text{, }v\mapsto \mathcal{H}_c(v,v).
\end{gather}
\end{defn}
\begin{rem}
	\label{R23}
	Due to the continuity of $\operatorname{BS}_{\Sigma}:L^{\frac{4}{3}}\mathcal{V}(\Sigma)\rightarrow L^4\mathcal{V}(\Sigma)$ we see that cross helicity (and therefore helicity) is continuous and that there is some constant $c(\Sigma)>0$ such that for all $v,w\in L^{\frac{4}{3}}\mathcal{V}(\Sigma)$ we have $|\mathcal{H}_c(v,w)|\leq c_0\|v\|_{L^{\frac{4}{3}}(\Sigma)}\|w\|_{L^{\frac{4}{3}}(\Sigma)}$.
\end{rem}
In \cite{CP10} Cantarella and Parsley introduced the notion of helicity of closed forms on (sub-)manifolds of Euclidean spaces. Of particular interest are the $(1,3,3)$-helicity and the $(0,2,3)$-helicity which correspond to helicities of $2$-forms on domains in $\mathbb{R}^3$ and of $1$-forms on embedded surfaces in $\mathbb{R}^3$ respectively. It is shown \cite[Theorem 2.15]{CP10} that the $(1,3,3)$-helicity coincides with the classical helicity of divergence-free fields, tangent to the boundary of a given domain $\Omega$ upon an identification of a given closed $2$-form $\alpha$ with a vector field $v$ by contraction of the corresponding volume form.

Regarding the $(0,2,3)$-helicity the authors in \cite{CP10} mention that it should be possible to regard it as a measure of the linkage of distinct field lines of a vector field associated with a given closed $1$-form $\alpha$ and they raise the question of whether or not the $(0,2,3)$-helicity may attain non-trivial values (i.e. whether there exists $v\in C^\infty\mathcal{V}_0(\Sigma)$ with $\mathcal{H}(v)\neq 0$). They conjecture that this should be the case whenever $\Sigma$ has a non-trivial homology.

To summarise, the following two central problems regarding the $(0,2,3)$ helicity were open \cite[Section 6.1]{CP10}

\begin{enumerate}
	\item \textit{Show that if $\Sigma$ has non-trivial homology, then the $(0,2,3)$-helicity is not identically zero}.
	\item \textit{Provide a mathematically rigorous physical interpretation of $(0,2,3)$-helicity.}
\end{enumerate}
In the upcoming result we denote by $\mathcal{H}_{(0,2,3)}$ the $(0,2,3)$-helicity on (closed) $1$-forms and by $\mathcal{H}$ the (vector field) helicity as introduced in \Cref{D22}. Given some $v\in C^\infty\mathcal{V}(\Sigma)$ we denote by $\iota_v\omega_\Sigma$ the contraction of the area form $\omega_\Sigma$ via the vector field $v$. In particular, if $v$ is divergence-free, the associated $1$-form is closed. Since the authors in \cite{CP10} deal with smooth domains and surfaces we formulate the following result in the smooth setting.
\begin{thm}
	\label{T24}
	Let $\Sigma\subset\mathbb{R}^3$ be a smooth, connected, closed surface and let $v\in C^\infty\mathcal{V}_0(\Sigma)$. Then
	\[
	\mathcal{H}_{(0,2,3)}(\iota_v\omega_\Sigma)=\mathcal{H}(v).
	\]
\end{thm}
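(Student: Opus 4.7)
The plan is to unwind the definition of $\mathcal{H}_{(0,2,3)}$ from \cite[Section 6.1]{CP10} and show that, once expressed in terms of tangent vector fields, it reduces to the integral defining $\mathcal{H}(v)$ in \Cref{D22}. The $(0,2,3)$-helicity of Cantarella and Parsley is built from a Biot--Savart-type operator on closed $1$-forms on $\Sigma$ using the $3$-dimensional Newtonian kernel $-\frac{1}{4\pi|x-y|}$, producing a new $1$-form whose wedge with the original is then integrated over $\Sigma$. Substituting $\alpha=\iota_v\omega_\Sigma$ is legitimate because $d\alpha=\mathcal{L}_v\omega_\Sigma=\operatorname{div}_\Sigma(v)\,\omega_\Sigma=0$ for $v\in C^\infty\mathcal{V}_0(\Sigma)$, so $\alpha$ lies in the domain of $\mathcal{H}_{(0,2,3)}$.

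The heart of the argument is a pointwise dictionary between differential forms and tangent vector fields on $\Sigma$. In a local positively oriented orthonormal frame $\{E_1,E_2\}$ chosen so that $(E_1,E_2,\mathcal{N})$ is a positively oriented frame of $\mathbb{R}^3$, one computes that the musical image of $\iota_v\omega_\Sigma$ is the tangent vector field $\mathcal{N}\times v$: writing $v=v^1E_1+v^2E_2$ one finds $(\iota_v\omega_\Sigma)^\sharp=-v^2E_1+v^1E_2=\mathcal{N}\times v$. Combining this with the wedge formula $\alpha\wedge\beta=\bigl(\alpha(E_1)\beta(E_2)-\alpha(E_2)\beta(E_1)\bigr)d\sigma$, the triple-product identity $a\cdot(b\times c)=(a\times b)\cdot c$, and the relation $(\mathcal{N}\times v)\cdot(\mathcal{N}\times w)=v\cdot w$ valid for tangent vectors $v,w$, the two $\pi/2$ rotations introduced by the two occurrences of $\iota_v\omega_\Sigma$ cancel. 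The kernel $\frac{1}{4\pi}\frac{x-y}{|x-y|^3}=-\nabla_x\frac{1}{4\pi|x-y|}$ that arises from differentiating the Cantarella--Parsley Newtonian potential is precisely the kernel of the surface Biot--Savart operator of \Cref{D21}, and the tangential projection $\cdot^{\parallel}$ appearing there is automatic since only the tangential components of the ambient vector survive the pairing against the tangent $1$-form $\alpha$.

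The main obstacle is a careful bookkeeping of orientation and sign conventions: the sign of the Newtonian kernel chosen in \cite{CP10}, the orientation of $\omega_\Sigma$ induced by the outward unit normal $\mathcal{N}$, and the wedge product sign. Fixing a positively oriented orthonormal frame $(E_1,E_2,\mathcal{N})$ at the outset and carrying out the local computation once reconciles all three conventions simultaneously; the result then follows by summing over a partition of unity subordinate to such coordinate patches. No analytic difficulty arises because $v$ and $\Sigma$ are smooth and $\Sigma$ is closed, so Fubini applies freely to the iterated integrals involved and no approximation argument is required.
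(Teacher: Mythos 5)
There is a genuine gap, and it sits at the very first step: you have misidentified the object $\mathcal{H}_{(0,2,3)}$. In \cite{CP10} (and as recalled at the start of Section 3.1 of this paper) the $(0,2,3)$-helicity of a closed $1$-form $\alpha$ is \emph{defined} as the configuration-space integral $\int_{C_2[\Sigma]}\alpha_x\wedge\alpha_y\wedge g^\#\omega^2_{S^2}$, where $g(x,y)=\frac{x-y}{|x-y|}$ is the Gauss map and $\omega^2_{S^2}$ is the normalised area form on $S^2$; it is \emph{not} defined through a Biot--Savart-type operator built from the Newtonian kernel $-\frac{1}{4\pi|x-y|}$. Asserting that the definition is already of Biot--Savart form essentially assumes the conclusion: the whole content of \Cref{T24} is precisely that the Gauss-map configuration-space integral reduces to the pairing $\int_\Sigma v\cdot\operatorname{BS}_\Sigma(v)\,d\sigma$. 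Consequently the substantive computation is missing from your outline, namely the evaluation of the pullback $g^\#\omega^2_{S^2}$ on $T_{(x,y)}(\Sigma\times\Sigma)$. This requires computing the pushforwards $G_*\partial_{\mu^i}$, $G_*\partial_{\eta^j}$ of coordinate frames at $x$ and $y$, using the identity $\omega(\hat r)\bigl(\nu-(\hat r\cdot\nu)\hat r,\ \xi-(\hat r\cdot\xi)\hat r\bigr)=(\nu\times\xi)\cdot\hat r$ to extract the coefficients, and observing that the resulting $2$-form has both ``diagonal'' components ($d\mu^1\wedge d\mu^2$ and $d\eta^1\wedge d\eta^2$) and ``mixed'' components ($d\mu^i\wedge d\eta^j$), of which only the mixed ones survive the wedge with $\alpha_x\wedge\alpha_y$. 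None of this appears in your argument, and it cannot be replaced by the remark that the Biot--Savart kernel is $-\nabla_x\frac{1}{4\pi|x-y|}$: the surface Biot--Savart operator of \Cref{D21} is the tangential part of a cross-product integral in ambient $\mathbb{R}^3$ and does not admit an intrinsic description as ``Newtonian potential applied to a $1$-form on $\Sigma$''.

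The second half of your outline --- the dictionary $(\iota_v\omega_\Sigma)^\sharp=\mathcal{N}\times v$, the triple-product manipulations, the cancellation of the two quarter-turns via $(\mathcal{N}\times v)\cdot(\mathcal{N}\times w)=v\cdot w$, and the observation that the tangential projection in $\operatorname{BS}_\Sigma$ is immaterial when paired against a tangent field --- is exactly the right toolkit and matches the concluding steps of the paper's proof. To repair the argument you would need to insert, before that algebra, the explicit local computation of $g^\#\omega^2_{S^2}$ described above (or an equivalent derivation showing that the mixed part of the Gauss form equals $\frac{(p-q)\cdot(\partial_{\mu^i}\times\partial_{\eta^j})}{4\pi|p-q|^3}\,d\mu^i\wedge d\eta^j$ up to sign), together with the citation to \cite{CP10} justifying that integration over the Fulton--MacPherson compactification $C_2[\Sigma]$ reduces to integration over $(\Sigma\times\Sigma)\setminus\{x=y\}$.
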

We exploit this relationship to provide an answer to both of the open problems raised in \cite{CP10} regarding $(0,2,3)$-helicity.

The following provides an answer to the first open problem regarding surface helicity.
\begin{thm}[Helicity is non-trivial]
	\label{T25}
	Let $\Sigma\subset\mathbb{R}^3$ be a closed, connected $C^{1,1}$-surface. Then there exists some $v\in L^2\mathcal{V}_0(\Sigma)$ with $\mathcal{H}(v)\neq 0$ if and only if $g(\Sigma)\geq 1$, where $g(\Sigma)$ denotes the genus of $\Sigma$. Further, if $v\in L^2\mathcal{V}_0(\Sigma)$ is co-exact, i.e. it can be written as $v=\operatorname{grad}_\Sigma(f)\times \mathcal{N}$, for some function $f\in H^1(\Sigma)$ and where $\mathcal{N}$ denotes the outward unit normal, then $\mathcal{H}_c(v,w)=0$ for all $w\in L^2\mathcal{V}_0(\Sigma)$.
\end{thm}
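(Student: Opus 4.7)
The plan is to organise the proof around the Hodge decomposition of $L^{2}\mathcal{V}_{0}(\Sigma)$. The third assertion (co-exact fields lie in the left kernel of $\mathcal{H}_{c}$) immediately yields the $g(\Sigma)=0$ direction of the first: on a topological sphere the harmonic subspace vanishes (since $H^{1}(S^{2})=0$), so Hodge theory represents every $v\in L^{2}\mathcal{V}_{0}(\Sigma)$ as $\operatorname{grad}_{\Sigma}f\times \mathcal{N}$ for some $f\in H^{1}(\Sigma)$, whence $\mathcal{H}(v)=\mathcal{H}_{c}(v,v)=0$. The proof therefore reduces to two tasks: establishing the co-exact vanishing, and producing a witness $v$ with $\mathcal{H}(v)\neq 0$ when $g(\Sigma)\geq 1$.

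For the co-exact vanishing I would first treat smooth data. The triple-product identity rewrites
\[
\mathcal{H}_{c}(\operatorname{grad}_{\Sigma}f\times \mathcal{N},w)=\int_{\Sigma}\operatorname{grad}_{\Sigma}f\cdot\bigl(\mathcal{N}\times \operatorname{BS}_{\Sigma}(w)\bigr)\,d\sigma,
\]
and on the closed surface $\Sigma$ the surface divergence theorem reduces the claim to showing that $\operatorname{BS}_{\Sigma}(w)$ is curl-free in the weak sense of the paper whenever $w$ is divergence-free, i.e., $\int_{\Sigma}\operatorname{BS}_{\Sigma}(w)\cdot(\operatorname{grad}_{\Sigma}\alpha\times \mathcal{N})\,d\sigma=0$ for every $\alpha\in C^{1}(\Sigma)$. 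For this I would pass to the three-dimensional single-layer potential $A(x)=\frac{1}{4\pi}\int_{\Sigma}\frac{w(y)}{|x-y|}\,d\sigma(y)$: a surface integration by parts using $\operatorname{div}_{\Sigma}w=0$ yields $\operatorname{div} A\equiv 0$, so $B:=\operatorname{curl} A$ satisfies $\operatorname{curl} B=-\Delta A=0$ on $\mathbb{R}^{3}\setminus \Sigma$, and the standard jump relations identify $\operatorname{BS}_{\Sigma}(w)$ with the tangential part of $\tfrac{1}{2}(B^{+}+B^{-})$ on $\Sigma$. For a compactly supported extension $\tilde\alpha$ of $\alpha$, applying $\operatorname{div}(B\times \nabla\tilde\alpha)=0$ and the divergence theorem separately on the interior and exterior of $\Sigma$ yields $\int_{\Sigma}(B^{\pm})^{\parallel}\cdot(\operatorname{grad}_{\Sigma}\alpha\times \mathcal{N})\,d\sigma=0$, and averaging gives the desired identity. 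A density argument in $L^{2}\mathcal{V}_{0}(\Sigma)$ combined with the continuity of $\operatorname{BS}_{\Sigma}$ (\Cref{R23}) then extends the vanishing to general $w\in L^{2}\mathcal{V}_{0}(\Sigma)$ and $f\in H^{1}(\Sigma)$.

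For the existence part with $g(\Sigma)\geq 1$ I would use \Cref{T24} to translate into the $(0,2,3)$-helicity framework and exhibit a harmonic field with non-vanishing helicity. On a standard torus of revolution, take a rational linear combination $v=a\,v_{\mathrm{tor}}+b\,v_{\mathrm{pol}}$ of the two generators of the harmonic subspace so chosen that every orbit of $v$ is a $(p,q)$-torus knot with $pq\neq 0$; then any two distinct orbits link in $\mathbb{R}^{3}$ with linking number $pq$, and combining this with the linking interpretation of surface helicity (developed as a companion result in the paper) gives $\mathcal{H}(v)\neq 0$. For a general closed surface of genus $g\geq 1$, this configuration can be transported through a diffeomorphism onto a handle of $\Sigma$ and cut off inside a tubular neighbourhood; the Hodge projection onto the harmonic part leaves the helicity unchanged by the already-established third assertion.

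The main obstacle I anticipate lies in the co-exact step: the jump relations at $C^{1,1}$-regularity must be invoked with care, and the density argument has to respect the divergence-free constraint on $\Sigma$ (for instance through a Helmholtz-type smooth approximation). The existence step is more conceptual but relies crucially on a quantitative linking interpretation of $\mathcal{H}$ beyond what is stated in \Cref{T24}.
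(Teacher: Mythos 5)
Your reduction of the ``only if'' direction to the co-exact vanishing is correct and matches the paper, and your argument for the co-exact vanishing itself is a valid (and somewhat more structural) variant of what the paper does: the paper also passes through the jump relations and an extension of the field into $\Omega$, but instead of proving that $\operatorname{BS}_{\Sigma}(w)$ is weakly curl-free it computes the cross-helicity directly and lands on $2\mathcal{H}_c(\nabla^{\perp}f,w)=\langle\nabla_{\Sigma}f,w\rangle_{L^2(\Sigma)}=0$. Your single-layer-potential route ($\operatorname{div}A=0$ from $\operatorname{div}_{\Sigma}w=0$, $\operatorname{curl}B=0$ off $\Sigma$, averaging the two traces) is sound modulo the usual care with $L^2$ densities on $C^{1,1}$ surfaces. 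Note, however, that the paper's more explicit computation is not just a stylistic choice: it simultaneously produces the identity $2\mathcal{H}_c(\gamma,\tilde{\gamma})=\|\gamma\|^2_{L^2(\Sigma)}$ for the harmonic projection $\gamma$ of a Neumann field, which is the engine behind the existence part and is reused in \Cref{T213}, \Cref{T223} and \Cref{T225}; your approach only yields the vanishing.

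The genuine gap is in the existence part for $g(\Sigma)\geq 1$. Surface helicity and the pairwise linking numbers of field lines are \emph{extrinsic} quantities, defined through the Euclidean kernel $\frac{x-y}{|x-y|^3}$, so ``transporting the configuration through a diffeomorphism onto a handle of $\Sigma$'' does not preserve them: a surface (or even ambient) diffeomorphism changes the embedding, and the linking number of two parallel push-offs of a curve $\delta\subset\Sigma$ is the surface framing of $\delta$, which depends on how the handle sits in $\mathbb{R}^3$ and vanishes for many curves (e.g.\ for purely poloidal or purely toroidal circles on a standard torus, cf.\ \Cref{PoloidalCut}). After cutting off inside an annulus foliated by parallel copies of $\delta$ (the only way the cut-off preserves both $\operatorname{div}_{\Sigma}v=0$ and the closedness of all orbits), the linking interpretation gives $\mathcal{H}(v)\neq 0$ \emph{only if} $\delta$ has non-zero surface framing, and you would still have to prove that every embedded closed surface of genus $\geq 1$ carries a simple closed curve with this property — a non-trivial topological fact (it follows from the Seifert-form identity $\operatorname{lk}(a,b^{+})-\operatorname{lk}(b,a^{+})=a\cdot b$, but it is nowhere addressed in your sketch). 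The paper avoids all of this by working with a non-zero harmonic Neumann field $\Gamma$ of the enclosed domain, whose existence for $g\geq 1$ is guaranteed by the Hodge decomposition, and by showing via potential theory that $\mathcal{H}(\gamma+\tilde{\gamma})=\|\gamma\|^2_{L^2(\Sigma)}>0$ whenever $\mathcal{H}(\gamma)=\mathcal{H}(\tilde{\gamma})=0$; this is uniform in the genus and in the embedding. As written, your existence argument only establishes the claim for the standard torus of revolution.
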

Regarding the second question posed above we intend to follow the ideas of \cite{Arnold2014} and \cite{V03} regarding the physical interpretation of the standard $3$-d helicity (which corresponds to $(1,3,3)$-helicity in the notation of \cite{CP10}).

Note that in general $\Sigma$ might contain a large "flat region" (a copy of a large $2$-d disc) and then if the field lines of a given vector field $v\in C^{0,1}\mathcal{V}_0(\Sigma)$ are not necessarily periodic, we might attempt, in the same spirit as in \cite{V03}, to close the field lines by geodesics. However, at least as long as the endpoints of the considered field lines are contained in the flat region, the geodesics will be lines passing through the corresponding endpoints of the distinct field line segments. But then these artificially added line segments will intersect with "high probability" (since the directions of the lines passing through the pairs of endpoints will be "generically" not parallel). Thus, one should expect that there is a set of positive measure in $\Sigma\times \Sigma$ for which the "artificially" closed field lines will intersect if we close field line segments by means of geodesics. In addition, the artificially added parts may intersect the original field line segments, see \Cref{Geodesic}.

\begingroup\centering
\begin{figure}[H]
	\hspace{4.5cm}\includegraphics[width=0.5\textwidth, keepaspectratio]{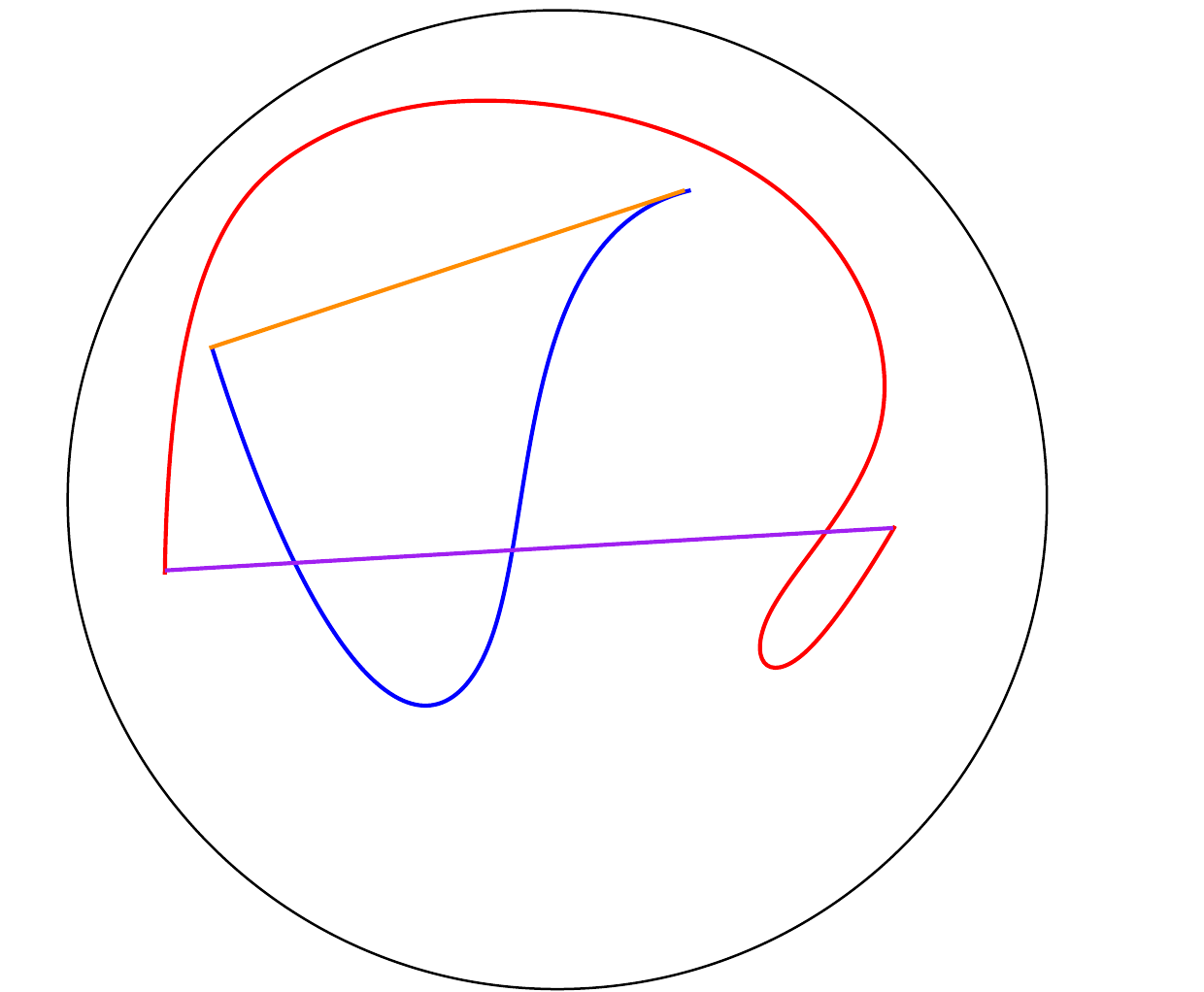}
	\caption{Two field lines segments (blue and red) artificially closed by geodesics (orange and purple) may lead to (self-)intersecting loops.} 
	\label{Geodesic}
\end{figure}
\endgroup
Therefore it is necessary to find a new way to appropriately close the field lines of the underlying vector field to be able to obtain an interpretation of helicity in terms of average linking numbers.

Given some $v\in C^{0,1}\mathcal{V}_0(\Sigma)$ and $x\in \Sigma$ we let $\gamma_x(t)$ denote the (unique) field line of $v$ starting at $x$. Now if we are given some $x\in \Sigma$ and $T>0$ it can happen that $\gamma_x(T)=x=\gamma_x(0)$ in which case the field line $\gamma_x$ is closed (possibly a point). In general however we will find $\gamma_x(T)\neq x=\gamma_x(0)$ and therefore it is necessary to close the field line in a suitable way to give an interpretation of helicity in terms of linking of suitable closed curves. It is well-known that, if $\Sigma\in C^2$, there exists some $\tau_0(\Sigma)>0$ such that the map $\Psi_{\tau}(x):=x+\tau\mathcal{N}(x)$, $x\in \Sigma$, is a diffeomorphism onto its image for every $|\tau|<\tau_0$ where $\mathcal{N}$ denotes the outward pointing unit normal, \cite[Chapter 7: Product Neighbourhood Theorem]{M65}. We define $\Sigma_{\tau}:=\Psi_{\tau}(\Sigma)$ so that in particular $\Sigma_0=\Sigma$. Now it is the case that for almost every $(x,y)\in \Sigma\times \Sigma$ the images of the corresponding field lines $\gamma_x(\mathbb{R})$ and $\gamma_y(\mathbb{R})$ are disjoint. Given $T,S>0$ we then consider the curves $\gamma_x:[0,T]\rightarrow \Sigma$ and $\gamma_y:[0,S]\rightarrow \Sigma$. In order to close them (in case they are not already closed) we proceed as follows: We connect $\gamma_x(T)$ with $\Psi_{\tau}(\gamma_x(T))$ for a suitably chosen $\tau$ (which we will be able to choose independent of $x,y,T$ and $S$) by following the path $\Psi_{t}(\gamma_x(T))$ from $t=0$ until $t=\tau$. Similarly, we can connect $\Psi_{\tau}(x)$ and $x=\gamma_x(0)$ by inverting the path $\Psi_{t}(x)$ from $t=0$ until $t=\tau$. We note that $\Psi_{\tau}(\gamma_x(T)),\Psi_{\tau}(x)\in \Sigma_{\tau}$ and that we are left with connecting $\Psi_{\tau}(\gamma_x(T))$ and $\Psi_{\tau}(x)$. We connect these two points by any curve contained in $\Sigma_{\tau}$ which is of unit speed and whose length can be uniformly bounded independently of $x$ and $T$. For instance, we can connect these two points by any (length minimising) geodesic on $\Sigma_{\tau}$. We close the curve $\gamma_y:[0,S]\rightarrow \Sigma$ in a similar fashion, but in contrast we connect $y$ and $\gamma_y(S)$ with their corresponding points on $\Sigma_{-\tau}$ and connect the corresponding two points on $\Sigma_{-\tau}$ by a geodesic on $\Sigma_{-\tau}$. Let us denote these two closed curves obtained in that way by $\sigma^{\tau}_{x,T}$ and $\sigma^{-\tau}_{y,S}$, see \Cref{ClosedCurve} for an illustration. By construction these two closed curves intersect if and only if $\gamma_x([0,T])$ and $\gamma_y([0,S])$ intersect, which we will show does not happen for almost every $(x,y)\in \Sigma\times \Sigma$.
Note also that we explicitly leave the surface $\Sigma$ to close our field lines.
By convention we set the linking number of a closed curve and any point not lying on this curve to zero.

\begingroup\centering
\begin{figure}[H]
	\hspace{4cm}\includegraphics[width=0.48\textwidth, keepaspectratio]{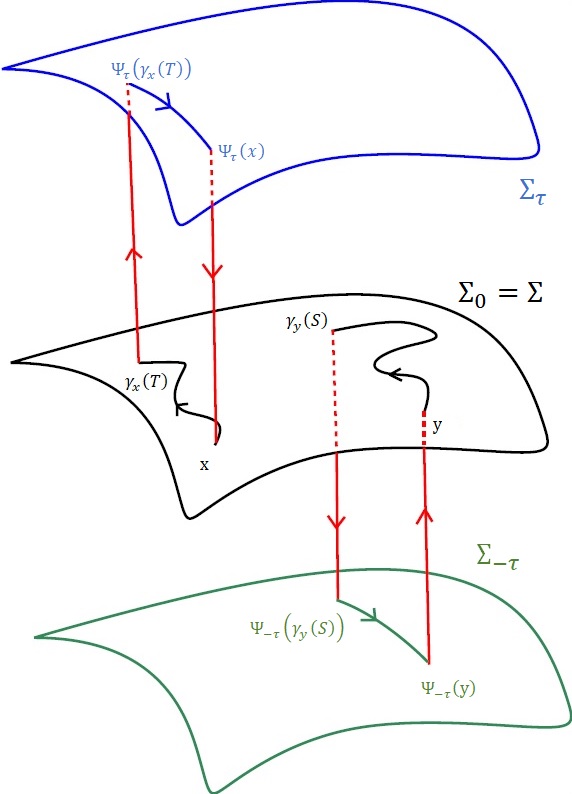}
	\caption{Illustration of the "closing process". The points $x,\gamma_x(T)$ are connected with their corresponding points on $\Sigma_{\tau}$ by straight lines which intersect $\Sigma$ orthogonally and the corresponding points on $\Sigma_{\tau}$ are connected by a geodesic. The arrows indicate the direction in which we walk along the closed curves.} 
	\label{ClosedCurve}
\end{figure}
\endgroup

With these preliminary observations we obtain the following result where we assume the surface $\Sigma$ to be of class $C^2$ to avoid technicalities.
\begin{thm}[Physical interpretation of surface helicity]
	\label{T26}
	Let $\Sigma\subset\mathbb{R}^3$ be a closed, connected $C^{2}$-surface. Then there exists some $\tau(\Sigma)>0$ such that for every $v\in C^{0,1}\mathcal{V}_0(\Sigma)$ and every sequences $(T_n)_n,(S_n)_n\subset (0,\infty)$ which are strictly increasing and diverging to $\infty$, the functions 
	\begin{gather}
		\nonumber
		\Sigma\times \Sigma\rightarrow \mathbb{R}\text{, }(x,y)\mapsto \frac{\operatorname{lk}\left(\sigma^{\tau}_{x,T_n}\text{,}\sigma^{-\tau}_{y,S_n}\right)}{T_nS_n}
	\end{gather}
	are well-defined elements of $L^1(\Sigma\times \Sigma)$ for every $n$ and converge to a limit function in $L^1(\Sigma\times \Sigma)$. Further, we have the identities
	\begin{gather}
		\nonumber
		\mathcal{H}(v)=\lim_{n\rightarrow\infty}\int_{\Sigma\times \Sigma}\frac{\operatorname{lk}\left(\sigma^{\tau}_{x,T_n}\text{,}\sigma^{-\tau}_{y,S_n}\right)}{T_nS_n}d\sigma(x)d\sigma(y)=\int_{\Sigma\times \Sigma}\lim^{L^1}_{n\rightarrow\infty}\frac{\operatorname{lk}\left(\sigma^{\tau}_{x,T_n}\text{,}\sigma^{-\tau}_{y,S_n}\right)}{T_nS_n}d\sigma(x)d\sigma(y).
	\end{gather}
\end{thm}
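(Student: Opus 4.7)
The plan is to mimic Arnold's classical ergodic-theoretic interpretation of $3$-d helicity as asymptotic linking, adapted to the $2$-d surface setting. First I would fix $\tau(\Sigma)>0$ small enough that the three surfaces $\Sigma_{-\tau}$, $\Sigma$ and $\Sigma_\tau$ are pairwise disjoint, which is possible by the product neighbourhood theorem for $C^2$-surfaces. This choice guarantees that the normal segments $\Psi_t(x)$, $t\in[0,\tau]$, and the geodesics on $\Sigma_{\pm\tau}$ used in the closure never hit $\Sigma$, so that $\sigma^{\tau}_{x,T}\cap\sigma^{-\tau}_{y,S}\neq\emptyset$ precisely when $\gamma_x([0,T])\cap\gamma_y([0,S])\neq\emptyset$. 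Since $v$ is divergence-free and tangent to $\Sigma$, its flow $\psi_t$ preserves the surface measure and therefore, by a standard argument, the field lines $\gamma_x$ and $\gamma_y$ are disjoint on any finite time window for $\sigma\otimes\sigma$-a.e.\ $(x,y)$, so the Gauss linking integral is well-defined almost everywhere.

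The second step is to represent the linking number by the Gauss integral
\[
\operatorname{lk}\bigl(\sigma^{\tau}_{x,T},\sigma^{-\tau}_{y,S}\bigr)=\frac{1}{4\pi}\oint_{\sigma^{\tau}_{x,T}}\oint_{\sigma^{-\tau}_{y,S}}\frac{(p-q)\cdot(dp\times dq)}{|p-q|^3}
\]
and split each loop into its field-line portion and its closure portion (two normal segments plus one geodesic on $\Sigma_{\pm\tau}$). The double integral then decomposes into four pieces. The field-line--field-line piece, after parametrising $dp=v(\gamma_x(s))\,ds$ and $dq=v(\gamma_y(t))\,dt$ and using the scalar triple product identity, becomes exactly
\[
\frac{1}{4\pi}\int_0^T\!\!\int_0^S v(\gamma_x(s))\cdot\Bigl(v(\gamma_y(t))\times\frac{\gamma_x(s)-\gamma_y(t)}{|\gamma_x(s)-\gamma_y(t)|^3}\Bigr)\,dt\,ds.
\]
Each of the three mixed pieces contains at least one factor whose total arc-length is bounded by a constant $C(\tau,\Sigma)$ independent of $x,y,T,S$; using the uniform separation between $\Sigma$ and $\Sigma_{\pm\tau}$ to bound the kernel on mixed pieces, and \Cref{R23} to control the purely surface piece, these contributions grow at most like $O(T+S+1)$ and thus vanish after division by $T_nS_n$.

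The main step is then to apply Birkhoff's ergodic theorem to the area-preserving product flow $(\psi_s,\psi_t)$ on $\Sigma\times\Sigma$. Setting
\[
F(x,y):=v(x)\cdot\Bigl(v(y)\times\frac{x-y}{|x-y|^3}\Bigr),
\]
the boundedness of $v$ together with the $L^{4/3}\to L^4$ mapping property of $\operatorname{BS}_\Sigma$ in \Cref{R23} shows that $F\in L^1(\Sigma\times\Sigma)$ with $\int_{\Sigma\times\Sigma}F\,d\sigma\otimes d\sigma=4\pi\mathcal{H}(v)$. The pointwise and $L^1$-Birkhoff theorem, applied to the $\mathbb{R}^2$-action by $(\psi_s\times\psi_t)$ (invoking Tempelman's multi-parameter version, or applying the one-parameter theorem twice with dominated convergence), yields
\[
\frac{1}{T_nS_n}\int_0^{T_n}\!\!\int_0^{S_n}F(\psi_s(x),\psi_t(y))\,dt\,ds\;\xrightarrow{L^1(\Sigma\times\Sigma)}\;\overline{F}(x,y),
\]
with $\int\overline{F}=\int F=4\pi\mathcal{H}(v)$. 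Combining this with Step 2 and dividing by $4\pi$ produces both the $L^1$ convergence of $\operatorname{lk}(\sigma^{\tau}_{x,T_n},\sigma^{-\tau}_{y,S_n})/(T_nS_n)$ and the equality of the two displayed integrals with $\mathcal{H}(v)$.

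The main obstacle is the singular Gauss kernel. Verifying that $F\in L^1(\Sigma\times\Sigma)$ and, more delicately, justifying the multi-parameter ergodic convergence in $L^1$ for an integrand with a non-integrable pointwise singularity on the diagonal, is where most of the care is required; the standard trick is to truncate the kernel at scale $\varepsilon$, apply Birkhoff to the bounded truncation, and then let $\varepsilon\to 0$ using the uniform-in-$T,S$ integrability of the time-averages which follows from the invariance of $d\sigma$ under $\psi_t$ together with the $L^{4/3}$--$L^4$ bound from \Cref{R23}.
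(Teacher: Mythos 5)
Your overall strategy coincides with the paper's: almost-everywhere disjointness of the field-line segments, the Gauss linking integral split into a field-line--field-line piece plus closure pieces, an $O(T+S+1)$ bound on the closure pieces, and a two-parameter $L^1$ ergodic theorem applied to the kernel $F$. However, two of your justifications have genuine gaps. First, the claim that boundedness of $v$ together with the $L^{4/3}\to L^4$ mapping property of $\operatorname{BS}_\Sigma$ shows $F\in L^1(\Sigma\times\Sigma)$ is false: boundedness alone gives only $|F(x,y)|\lesssim|x-y|^{-2}$, which is \emph{not} integrable over a $2$-dimensional surface squared (the inner integral already diverges logarithmically), and \Cref{R23} concerns the tangentially projected operator, whose finiteness rests on cancellations and says nothing about absolute integrability of the pointwise kernel --- which is exactly what the ergodic theorem requires. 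The correct (and much simpler) fix uses the Lipschitz hypothesis on $v$: writing $v(x)\times v(y)=v(x)\times\bigl(v(y)-v(x)\bigr)$ gives $|F(x,y)|\lesssim|x-y|^{-1}$, which is integrable; with this in hand your proposed truncation-and-limit scheme becomes unnecessary, since the two-parameter $L^1$ ergodic theorem applies directly to $F$.

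Second, your treatment of the mixed pieces does not work as stated. The normal segments $s\mapsto a-s\mathcal{N}(a)$ (respectively $a+s\mathcal{N}(a)$) are attached to $\Sigma$ at $s=0$, so there is \emph{no} uniform separation between them and the field-line portions, which also lie on $\Sigma$; the Gauss kernel is genuinely singular along these pieces, and the uniform-separation argument only disposes of the geodesic arcs on $\Sigma_{\pm\tau}$. To control the segment-versus-field-line and segment-versus-segment contributions one needs a quantitative lower bound of the form $|a-s\mathcal{N}(a)-z|^2\geq\tfrac14\bigl(|a-z|^2+s^2\bigr)$ for $z\in\Sigma$ and $0\leq s\leq\tau$, and this forces a specific smallness condition on $\tau$ in terms of the constant $c$ in $|\mathcal{N}(a)\cdot(a-z)|\leq c|a-z|^2$ and of the scale $\delta$ on which $\mathcal{N}(a)\cdot\mathcal{N}(b)\geq\tfrac12$ --- merely asking that $\Sigma_{-\tau}$, $\Sigma$, $\Sigma_\tau$ be pairwise disjoint does not deliver it. Once that bound is established, integrating $\int_0^\tau(|a-z|^2+s^2)^{-1}\,ds\leq\tfrac{\pi}{2|a-z|}$ and using the area-preservation of the flow to integrate over $\Sigma\times\Sigma$ yields the $O(T+S+1)$ growth you assert; without it that step is unsupported.
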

Here $\lim^{L^1}$ indicates that we consider the $L^1$-limit (and not the pointwise limit which exists only upon passing to appropriate subsequences). Therefore surface helicity may be regarded as either the "\textit{asymptotic average linking number of distinct field lines of $v$}" or, as is done more commonly in the $3$-$d$ case, the "\textit{average asymptotic linking number of distinct field lines of $v$}".
\begin{rem}
	\label{R27}
	If all field lines of $v$ are periodic and there exists some $\tau>0$ such that all field lines of $v$ have a period of at most $\tau$, then we can instead close the field line segments of $v$ by continuing to follow the flow of $v$ until we return to the respective starting points. In this periodic situation no additional technical issues arise by assuming that $\Sigma\in C^{1,1}$.
\end{rem}
\subsection{Surface helicity on toroidal surfaces}
\label{SS22}
We have seen in \Cref{T26} that on an arbitrary (closed) surface $\Sigma$, we can interpret helicity as a measure of entanglement of distinct field lines of the underlying vector field. In the $3$-dimensional situation entanglement of distinct field lines does not allow to infer information about the dynamical properties of individual field lines. However, in the $2$-dimensional situation closed field lines are co-dimension $1$ submanifolds and therefore the dynamics of individual field lines may very well influence the ability of the remaining field lines to link. To illustrate this point further, we imagine we are given a toroidal surface $\Sigma\cong T^2$ in $\mathbb{R}^3$ and a (regular) vector field $v$ tangent to $\Sigma$. We suppose that $v$ has a closed field line $\gamma$ which wraps once in poloidal direction around $\Sigma$. Since distinct field lines do not cross we infer that the remaining field lines of $v$ must be contained within $\Sigma\setminus \gamma$. But we can continuously "unfold" $\Sigma\setminus\gamma$ and we see that in fact the remaining field lines may be viewed as being contained in a cylinder so that no linking may take place, c.f. \Cref{PoloidalCut}. Consequently, the helicity of $v$ must be zero by means of the linking interpretation of helicity, \Cref{T26}. We therefore see that the dynamics of individual field lines may have a strong influence on surface helicity and one may thus expect that an alternative interpretation of surface helicity in terms of averages of dynamical properties of individual field lines should be possible.

\begingroup\centering
\begin{figure}[H]
	\hspace{4cm}\includegraphics[width=0.45\textwidth, keepaspectratio]{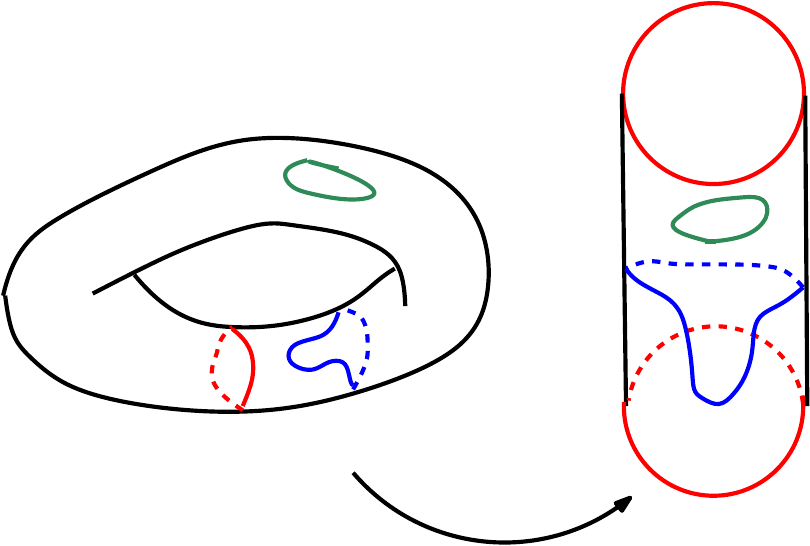}
	\caption{The presence of a poloidal field line $\gamma$ (red) prevents linking of any other field lines (here blue and green).} 
	\label{PoloidalCut}
\end{figure}
\endgroup

We focus here on $C^{1,1}$-surfaces $\Sigma\subset\mathbb{R}^3$ which bound solid tori, i.e. $\Sigma=\partial\Omega$ for a domain $\Omega\subset\mathbb{R}^3$ which is diffeomorphic to the solid torus $D^2\times S^1$, where $D^2$ is the unit disc in $\mathbb{R}^2$ and $S^1$ is its boundary. We note that these type of surfaces are the most relevant in plasma physics, as they appear as the boundaries of plasma domains or model the surfaces along which the coils wind which generate the plasma confining magnetic field \cite{M87}.

Before we state the main result of this subsection let us introduce some notions.
\begin{defn}[Poloidal \& Toroidal curves]
	\label{D28}
	Let $T^2\cong \Sigma\subset\mathbb{R}^3$ bound a $C^{1,1}$-solid torus $\Omega\subset\mathbb{R}^3$, i.e. $\Omega\cong D^2\times S^1$ and $\partial\Omega=\Sigma\in C^{1,1}$. We say that a simple closed $C^1$-curve $\sigma_p$ on $\Sigma$ is \textit{purely poloidal} if $\sigma_p$ is contractible when viewed as curve within $\overline{\Omega}$, but represents a non-trivial element of the first fundamental group of $\Sigma$. We call a simple closed $C^1$-curve $\sigma_t$ in $\Sigma$ \textit{toroidal} if it generates the first fundamental group of $\overline{\Omega}$ once viewed as a curve in $\overline{\Omega}$.
\end{defn}
We note first that, up to orientation and a standard identification, any two purely poloidal curves are path-homotopic within $\Sigma$. Similarly, up to orientation, all toroidal curves are homotopic within $\overline{\Omega}$. However, two toroidal curves may not be (even up to orientation) homotopic within $\Sigma$. In addition, any choice of $\sigma_p$ and $\sigma_t$ forms a pair of generators of the first fundamental group of $\Sigma$.

To motivate the next definition we consider an arbitrary simple closed $C^1$-curve $\sigma$ contained in $\Sigma$ and for a fixed pair $\sigma_p$ and $\sigma_t$ of a purely poloidal and a toroidal curve we can express $\sigma=P\sigma_p\oplus Q\sigma_t$ where $\oplus$ denotes the concatenation of curves and $P,Q\in \mathbb{Z}$ denote the amount of times the curves $\sigma_p$ and $\sigma_t$ are traversed. We observe first that there is a unique pair $\gamma_p,\gamma_t\in \mathcal{H}(\Sigma):=\{v\in W^{1,2}\mathcal{V}(\Sigma)\mid \operatorname{curl}_{\Sigma}(v)=0=\operatorname{div}_{\Sigma}(v)\}$ which forms a basis of $\mathcal{H}(\Sigma)$ and satisfies
\begin{gather}
	\label{E1}
	\int_{\sigma_p}\gamma_t=0=\int_{\sigma_t}\gamma_p\text{, }\int_{\sigma_p}\gamma_p=1=\int_{\sigma_t}\gamma_t.
\end{gather}
We observe further, which for example follows from an identical reasoning as in \cite[Lemma A.1]{G24}, that in fact $\mathcal{H}(\Sigma)\subset \bigcap_{1<p\leq\infty}W^{1,p}\mathcal{V}(\Sigma)\subset \bigcap_{0<\alpha<1}C^{0,\alpha}\mathcal{V}(\Sigma)$ and therefore the line integrals in (\ref{E1}) make sense in a classical sense. Now since $\sigma$ is homotopic to $P\sigma_p\oplus Q\sigma_t$, we find
\begin{gather}
	\nonumber
	\int_{\sigma}\gamma_t=P\int_{\sigma_p}\gamma_t+Q\int_{\sigma_t}\gamma_t=Q
\end{gather}
and further
\begin{gather}
	\label{E2}
	\lim_{T\rightarrow\infty}\frac{1}{T}\int_{\sigma[0,T]}\gamma_t=\frac{Q}{\tau},
\end{gather}
where $\tau>0$ is the period of $\sigma$ and $\sigma[0,T]$ is the path $[0,T]\rightarrow\Sigma$, $t\mapsto\sigma(t)$ (which is not simple for $T\geq \tau$). Similarly we find $P=\int_{\sigma}\gamma_p$ and $\lim_{T\rightarrow\infty}\frac{1}{T}\int_{\sigma[0,T]}\gamma_p=\frac{P}{\tau}$. This motivates the following definition.
\begin{defn}[Weighted asymptotic poloidal \& toroidal windings]
	\label{D29}
	Let $T^2\cong\Sigma\subset\mathbb{R}^3$ bound a $C^{1,1}$-solid torus $\Omega\subset\mathbb{R}^3$. Given a toroidal curve $\sigma_t$ and a purely poloidal curve $\sigma_p$ we define the \textit{weighted asymptotic toroidal windings} resp. \textit{weighted asymptotic poloidal windings} of a (not necessarily periodic) $C^1$-curve $\sigma:[0,\infty)\rightarrow\Sigma$ by
	\begin{gather}
		\nonumber
		\hat{q}(\sigma):=\lim_{T\rightarrow\infty}\frac{1}{T}\int_{\sigma[0,T]}\gamma_t\text{, }\hat{p}(\sigma):=\lim_{T\rightarrow\infty}\frac{1}{T}\int_{\sigma[0,T]}\gamma_p.
	\end{gather}
\end{defn}
\begin{rem}
	\label{R210}
	We note that $\gamma_t$ in (\ref{E1}) is independent of the chosen purely poloidal curve $\sigma_p$ and it only depends on the toroidal curve $\sigma_t$ via its orientation, i.e. if two toroidal curves are oriented such that they are homotopic within $\overline{\Omega}$ then the corresponding $\gamma_t$ will be the same and if they are oriented in opposite direction, then the corresponding $\gamma_t$ will differ by a minus sign. Consequently, $\hat{q}(\sigma)$ also depends only on the chosen orientation of the toroidal curve. However, $\gamma_p$ will in general depend on the homotopy type of $\sigma_t$ when viewed as a curve in $\Sigma$ so that even if two toroidal curves are homotopic within $\overline{\Omega}$, the corresponding $\gamma_p$ may be distinct (but will be the same if the toroidal curves are homotopic within $\Sigma$).
		\end{rem}
The following is the natural generalisation to vector fields.
\begin{defn}[(Average) Asymptotic windings]
	\label{D211}
	Let $T^2\cong\Sigma\subset\mathbb{R}^3$ bound a $C^{1,1}$-solid torus $\Omega\subset\mathbb{R}^3$. Given a toroidal curve $\sigma_t$, a purely poloidal curve $\sigma_p$ and a vector field $v\in C^{0,1}\mathcal{V}_0(\Sigma)$, we define the \textit{weighted asymptotic toroidal windings} and the \textit{weighted asymptotic poloidal windings} of $v$ at a point $x\in \Sigma$ as follows
	\begin{gather}
		\nonumber
		\hat{q}(x):=\hat{q}(\gamma_x)\text{, }\hat{p}(x):=\hat{p}(\gamma_x),
	\end{gather}
	where $\gamma_x$ denotes the field line of $v$ starting at $x$. We further define the \textit{average weighted asymptotic toroidal windings} and the \textit{average weighted asymptotic poloidal windings} of $v$ by
	\begin{gather}
		\nonumber
		\overline{Q}(v):=\frac{\int_{\Sigma}\hat{q}(x)d\sigma(x)}{|\Sigma|}\text{, }\overline{P}(v):=\frac{\int_{\Sigma}\hat{p}(x)d\sigma(x)}{|\Sigma|},
	\end{gather}
	where $|\Sigma|$ denotes the area of $\Sigma$.
\end{defn}
We note that in order to compute the average one should have $\hat{q},\hat{p}\in L^1(\Sigma)$. This however follows from standard ergodic theoretical arguments. We state it as a separate lemma since it is of importance when establishing a connection between surface helicity and toroidal and poloidal averages.
\begin{lem}
	\label{L212}
	Let $T^2\cong\Sigma\subset\mathbb{R}^3$ bound a $C^{1,1}$-solid torus $\Omega\subset\mathbb{R}^3$. Let $\sigma_p$ and $\sigma_t$ be a given purely poloidal and a toroidal curve respectively. If $v\in C^{0,1}\mathcal{V}_0(\Sigma)$, then $\hat{q},\hat{p}\in L^1(\Sigma)$ and we have
	\begin{gather}
		\nonumber
		\overline{Q}(v)=\frac{\int_{\Sigma}v(x)\cdot \gamma_t(x)d\sigma(x)}{|\Sigma|}\text{, }\overline{P}(v)=\frac{\int_{\Sigma}v(x)\cdot \gamma_p(x)d\sigma(x)}{|\Sigma|}.
	\end{gather}
\end{lem}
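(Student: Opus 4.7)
The strategy is to recognise $\hat{q}$ and $\hat{p}$ as Birkhoff time averages along the flow of $v$ and then to invoke the Birkhoff ergodic theorem for flows. Denote by $\phi_s$ the flow of $v$, which is globally defined because $v$ is Lipschitz, tangent to the compact surface $\Sigma$, and complete. Since the field line $\gamma_x$ satisfies $\gamma_x(s) = \phi_s(x)$ and $\dot\gamma_x(s) = v(\phi_s(x))$, the definition of the line integral together with the chain rule gives
\begin{equation*}
\int_{\gamma_x[0,T]} \gamma_t \;=\; \int_0^T \gamma_t(\phi_s(x)) \cdot v(\phi_s(x))\, ds.
\end{equation*}
Setting $f := v \cdot \gamma_t$, which is continuous and therefore bounded on $\Sigma$ (since $v \in C^{0,1}\mathcal{V}(\Sigma)$ and $\gamma_t \in C^{0,\alpha}\mathcal{V}(\Sigma)$ for every $\alpha \in (0,1)$ as recalled in the paper), the definition of $\hat{q}$ becomes the Birkhoff time average
\begin{equation*}
\hat{q}(x) \;=\; \lim_{T\to\infty} \frac{1}{T} \int_0^T f(\phi_s(x))\, ds,
\end{equation*}
and analogously $\hat{p}(x)$ is the time average of $g := v \cdot \gamma_p$.

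The second ingredient is that the surface measure $d\sigma$ is invariant under $\phi_s$. This is precisely the content of divergence-freeness in the sense stipulated by the paper's notation: from $\int_\Sigma v \cdot \nabla_\Sigma \alpha \, d\sigma = 0$ for every $\alpha \in C^1(\Sigma)$ one obtains, using that $v$ is Lipschitz so that $\phi_s$ is a bi-Lipschitz flow, $\frac{d}{ds} \int_\Sigma \alpha(\phi_s(x))\, d\sigma(x) = 0$ for all test functions $\alpha$; by density this yields $(\phi_s)_* d\sigma = d\sigma$ for all $s \in \mathbb{R}$. Equivalently, the change-of-variables formula together with $\operatorname{div}_\Sigma v = 0$ gives a unit Jacobian for $\phi_s$.

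With these two ingredients in place, Birkhoff's ergodic theorem for measure-preserving flows on the finite measure space $(\Sigma, d\sigma)$, applied to the integrable (in fact bounded) function $f$, guarantees that the limit defining $\hat{q}(x)$ exists for $d\sigma$-almost every $x$, that $\hat{q} \in L^1(\Sigma)$, and that
\begin{equation*}
\int_\Sigma \hat{q}(x)\, d\sigma(x) \;=\; \int_\Sigma f(x)\, d\sigma(x) \;=\; \int_\Sigma v(x)\cdot \gamma_t(x)\, d\sigma(x).
\end{equation*}
Dividing by $|\Sigma|$ yields the claimed identity for $\overline{Q}(v)$, and repeating the argument with $g = v \cdot \gamma_p$ in place of $f$ establishes the identity for $\overline{P}(v)$. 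The main technical point I expect to require some care is the rigorous verification of the invariance of $d\sigma$ under $\phi_s$ in the merely Lipschitz setting, since one cannot directly invoke Cartan's formula; however, it can be handled either by smooth approximation of $v$ (preserving the integral identity in the definition of divergence-freeness) or by a direct computation of the Jacobian determinant of $\phi_s$ from the Lipschitz ODE, which is standard.
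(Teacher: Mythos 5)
Your proposal is correct and follows essentially the same route as the paper: rewrite the line integral as a Birkhoff time average of $f=v\cdot\gamma_t$ along the (area-preserving, since $v$ is divergence-free) flow of $v$, and invoke the ergodic theorem to get $\hat q\in L^1(\Sigma)$ together with equality of the space averages. The extra care you devote to the measure-invariance in the Lipschitz setting is a reasonable refinement but not a departure from the paper's argument.
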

\begin{rem}
	One natural way to define \textit{purely toroidal} simple closed curves is to demand that a given toroidal loop is trivial within $\mathbb{R}^3\setminus \Omega$, see also \cite[Theorem B.2]{CP10}. Another useful way to choose a toroidal curve $\sigma_t$ can consist in making sure that the resulting induced element $\gamma_p$ is $L^2(\Sigma)$-orthogonal to $\gamma_t$ so that the quantities $\overline{Q}(v)$ and $\overline{P}(v)$ essentially correspond to the $L^2(\Sigma)$-orthogonal projection of $v$ onto the space $\mathcal{H}(\Sigma)$ in the sense that $\pi(v)=|\Sigma|\left(\overline{Q}(v)\frac{\gamma_t}{\|\gamma_t\|^2_{L^2(\Sigma)}}+\overline{P}(v)\frac{\gamma_p}{\|\gamma_p\|^2_{L^2(\Sigma)}}\right)$ where $\pi$ denotes the projection onto $\mathcal{H}(\Sigma)$ and $v\in C^{0,1}\mathcal{V}_0(\Sigma)$. Both choices can have advantages in distinct situations and therefore we allow general toroidal curves in our considerations.
\end{rem}
To establish a connection between the quantities $\overline{Q}$ and $\overline{P}$, which make statements about the average behaviour of individual field lines, and helicity which is a measure of the average linking of distinct field lines, we need to introduce one final notion.
We define the space $\mathcal{H}_N(\Omega)$ of \textit{harmonic Neumann fields} on a bounded $C^{1,1}$-domain $\Omega\subset\mathbb{R}^3$ as follows
\begin{gather}
	\nonumber
	\mathcal{H}_N(\Omega):=\{\Gamma\in H^1(\Omega,\mathbb{R}^3)\mid \operatorname{curl}(\Gamma)=0=\operatorname{div}(\Gamma)\text{, }\Gamma\parallel \partial\Omega\}.
\end{gather}
We note first, c.f. \cite[Lemma A.1]{G24}, that $\mathcal{H}_N(\Omega)\subset \bigcap_{1\leq p<\infty}W^{1,p}\mathcal{V}(\Omega)$ and thus, due to Sobolev embeddings $\mathcal{H}_N(\Omega)\subset \bigcap_{0<\alpha<1}C^{0,\alpha}\mathcal{V}(\overline{\Omega})$. Since $\operatorname{curl}(\Gamma)=0$ it follows that the restriction $\Gamma|_{\Sigma}$ satisfies $\operatorname{curl}_{\Sigma}(\Gamma|_{\Sigma})=0$ in the weak sense and according to the Hodge-decomposition theorem \cite[Corollary 3.5.2]{S95} we can therefore write
\begin{gather}
	\nonumber
	\Gamma|_{\Sigma}=\nabla_{\Sigma}\alpha+\gamma
\end{gather}
for suitable $\alpha\in H^1(\Sigma)$ and $\gamma\in \mathcal{H}(\Sigma)$. Here $\gamma$ is the $L^2$-orthogonal projection of $\Gamma|_{\Sigma}$ onto $\mathcal{H}(\Sigma)$. We note that since $\mathcal{H}_N(\Omega)$ is $L^2(\Omega)$-orthogonal to the gradient fields and because $\Gamma=\operatorname{curl}(A)$ for a suitable vector potential $A$ (for instance $A$ may be taken to be the Biot-Savart potential of $\Gamma$) we must have $\gamma\neq 0$ whenever $\Gamma\neq 0$.

We are now finally ready to give an alternative interpretation of surface helicity in terms of the "dynamics of individual field lines".
\begin{thm}[Physical interpretation of surface helicity on toroidal surfaces]
	\label{T213}
	Let $T^2\cong \Sigma\subset\mathbb{R}^3$ bound a $C^{1,1}$-solid torus $\Omega\subset\mathbb{R}^3$. Let $\sigma_p$ be a purely poloidal curve and $\sigma_t$ be a toroidal curve. Further, fix any $\Gamma\in \mathcal{H}_N(\Omega)$ such that the $L^2(\Sigma)$-orthogonal projection $\gamma$ of $\Gamma|_{\Sigma}$ onto $\mathcal{H}(\Sigma)$ is $L^2(\Sigma)$-normalised. In addition let $\tilde{\gamma}:=\gamma\times\mathcal{N}$. Then, given any $v\in C^{0,1}\mathcal{V}_0(\Sigma)$, we have
	\begin{gather}
		\label{E3}
		\frac{\mathcal{H}(v)}{|\Sigma|^2}=\overline{Q}(v)\overline{P}(v)\int_{\sigma_t}\gamma\int_{\sigma_p}\tilde{\gamma}+\left[\mathcal{H}(\gamma)\left(\int_{\sigma_t}\gamma\right)^2+\int_{\sigma_t}\gamma\int_{\sigma_t}\tilde{\gamma}\right]\overline{Q}^2(v).
	\end{gather}
	If, moreover, $\sigma_t$ bounds a bounded $C^{1,1}$-surface $\mathcal{A}$ outside of $\Omega$, i.e. $\mathcal{A}\subset \mathbb{R}^3\setminus\overline{\Omega}$ and $\partial\mathcal{A}=\sigma_t$, then
	\begin{gather}
		\nonumber
		\frac{\mathcal{H}(v)}{|\Sigma|^2}=\overline{Q}(v)\overline{P}(v)\int_{\sigma_t}\gamma\int_{\sigma_p}\tilde{\gamma}.
	\end{gather}
\end{thm}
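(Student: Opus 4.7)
The plan is to reduce the identity to a computation on the $2$-dimensional harmonic subspace $\mathcal{H}(\Sigma)$ and to isolate two universal scalar identities that carry all of the geometric content of the formula. I first apply the Hodge decomposition $v = c + h$ with $c = \nabla_\Sigma f \times \mathcal{N}$ co-exact and $h \in \mathcal{H}(\Sigma)$. A triple-product manipulation of the Biot--Savart kernel shows that $\mathcal{H}_c$ is symmetric in its two entries, and combining this symmetry with \Cref{T25} gives $\mathcal{H}(v) = \mathcal{H}_c(c+h,c+h) = \mathcal{H}(h)$.

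Next I coordinatize $\mathcal{H}(\Sigma)$. Since $\Omega \cong D^2 \times S^1$, the space $\mathcal{H}_N(\Omega)$ is one-dimensional; as $\sigma_p$ bounds a disk in $\overline{\Omega}$ and $\operatorname{curl}(\Gamma) = 0$, Stokes forces $\int_{\sigma_p}\Gamma = 0$, and since $\Gamma|_\Sigma - \gamma$ is a surface gradient, $\int_{\sigma_p}\gamma = 0$ as well, so $\gamma$ is a nonzero scalar multiple of $\gamma_t$. Because $\gamma \perp \mathcal{N}$ pointwise, $\{\gamma, \tilde{\gamma}\}$ is pointwise orthogonal with $|\gamma| = |\tilde{\gamma}|$, hence an $L^2(\Sigma)$-orthonormal basis of $\mathcal{H}(\Sigma)$. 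The fact that $\gamma, \tilde{\gamma}$ are both div- and curl-free on $\Sigma$ yields by integration by parts $\int_\Sigma c \cdot \gamma\, d\sigma = \int_\Sigma c \cdot \tilde{\gamma}\, d\sigma = 0$, so writing $h = a\gamma + b\tilde{\gamma}$ one has $a = \int_\Sigma v \cdot \gamma\, d\sigma$ and $b = \int_\Sigma v \cdot \tilde{\gamma}\, d\sigma$. Setting $B := \int_{\sigma_t}\gamma$, $C := \int_{\sigma_p}\tilde{\gamma}$, $D := \int_{\sigma_t}\tilde{\gamma}$ and inverting the period map between $\{\gamma_p, \gamma_t\}$ and $\{\gamma, \tilde{\gamma}\}$ (which gives $\gamma_t = \gamma / B$ and $\gamma_p = -\tfrac{D}{BC}\gamma + \tfrac{1}{C}\tilde{\gamma}$), I invoke \Cref{L212} to conclude
\[
a = B|\Sigma|\,\overline{Q}(v), \qquad b = |\Sigma|\bigl(C\,\overline{P}(v) + D\,\overline{Q}(v)\bigr).
\]

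Expanding $\mathcal{H}(h) = a^2 \mathcal{H}(\gamma) + 2ab\, \mathcal{H}_c(\gamma, \tilde{\gamma}) + b^2 \mathcal{H}(\tilde{\gamma})$ and matching the resulting polynomial in $(\overline{Q}(v), \overline{P}(v))$ against the right-hand side of the target formula, the proof reduces to the two universal scalar identities
\[
\mathcal{H}(\tilde{\gamma}) = 0, \qquad \mathcal{H}_c(\gamma, \tilde{\gamma}) = \tfrac{1}{2},
\]
which form the main technical obstacle. To establish them I would analyze the $3$-dimensional single-layer vector potential $\tilde{A}(x) := \tfrac{1}{4\pi}\int_\Sigma \tilde{\gamma}(y) \times \frac{x-y}{|x-y|^3}\, d\sigma(y)$: it is globally divergence-free, its curl is divergence- and curl-free on $\mathbb{R}^3 \setminus \Sigma$ (since $\tilde{\gamma}$ is harmonic on $\Sigma$), and the standard tangential jump reads $[\operatorname{curl}(\tilde{A})]^\parallel = \tilde{\gamma} \times \mathcal{N} = -\gamma$ by the BAC--CAB rule. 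Identifying the interior and exterior traces of $\operatorname{curl}(\tilde{A})$ with appropriate $3$-dimensional harmonic Neumann fields on $\Omega$ and its complement and then taking the tangential average yields $\operatorname{BS}_\Sigma(\tilde{\gamma}) = \tfrac{1}{2}\gamma + (\text{co-exact})$; pairing with $\gamma$ then gives $\mathcal{H}_c(\gamma, \tilde{\gamma}) = \tfrac{1}{2}\|\gamma\|_{L^2}^2 = \tfrac{1}{2}$, and pairing with $\tilde{\gamma}$ gives $\mathcal{H}(\tilde{\gamma}) = 0$ by the pointwise orthogonality $\gamma \perp \tilde{\gamma}$.

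For the second assertion, when $\sigma_t = \partial\mathcal{A}$ with $\mathcal{A} \subset \mathbb{R}^3 \setminus \overline{\Omega}$, I extend $\Gamma$ by zero outside $\Omega$ to obtain a globally divergence-free field (since $\Gamma \parallel \partial\Omega$), so the $3$-dimensional Biot--Savart $\operatorname{BS}_\Omega(\Gamma)$ is curl-free on $\mathbb{R}^3 \setminus \overline{\Omega}$. Stokes' theorem on $\mathcal{A}$ therefore forces $\int_{\sigma_t}\operatorname{BS}_\Omega(\Gamma) = 0$, and the same single-layer analysis as above converts this vanishing line integral into the relation $\mathcal{H}(\gamma)B + D = 0$, equivalently $\mathcal{H}(\gamma)B^2 + BD = 0$, which is exactly the vanishing of the bracket in the second part of the theorem.
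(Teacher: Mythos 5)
Your proposal is correct and follows essentially the same route as the paper: Hodge decomposition, reduction to the $2$-dimensional space $\mathcal{H}(\Sigma)$ with the basis $\{\gamma,\tilde{\gamma}\}$, identification $\gamma_t=\gamma/\int_{\sigma_t}\gamma$ via Stokes' theorem on the disc bounded by $\sigma_p$, the coefficient formulas from \Cref{L212}, and the two scalar identities $\mathcal{H}(\tilde{\gamma})=0$ and $\mathcal{H}_c(\gamma,\tilde{\gamma})=\tfrac12$, which are precisely the paper's \Cref{L31} and Step 1 of the proof of \Cref{T25}, established there by the same jump-relation/layer-potential argument you sketch. Your treatment of the second assertion via $\int_{\sigma_t}\operatorname{BS}_{\Omega}(\Gamma)=0$ is likewise the content of the paper's \Cref{CP2} (your sketch elides the identities $\int_{\Sigma}\gamma\cdot\operatorname{BS}_{\Omega}(\Gamma)\,d\sigma=\|\Gamma\|^2_{L^2(\Omega)}\mathcal{H}(\gamma)/\|\gamma\|^2_{L^2(\Sigma)}$ and $\int_{\Sigma}\tilde{\gamma}\cdot\operatorname{BS}_{\Omega}(\Gamma)\,d\sigma=\|\Gamma\|^2_{L^2(\Omega)}$ needed to make that conversion, but these are exactly what the paper supplies).
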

\begin{rem}[Sanity Check]
	\label{R214}
	We observe that $\dim(\mathcal{H}_N(\Omega))=1$ whenever $\Omega$ is a solid torus, \cite[Theorem 2.6.1]{S95}. Hence the condition of $L^2(\Sigma)$-normalisation of $\gamma$ determines $\Gamma$ uniquely up to a minus sign. If a chosen $\Gamma$ is replaced by $-\Gamma$ we see that $\gamma$ as well as $\tilde{\gamma}$ both are also changed by a minus sign and hence (since $\mathcal{H}(-\gamma)=\mathcal{H}(\gamma)$) the above formula is independent of that choice, as one would expect. Similarly, if $\sigma_p$ and $\tilde{\sigma}_p$ are purely poloidal curves on $\Sigma$ then either they are homotopic and thus the integrals involving $\sigma_p$ are unchanged or otherwise $\sigma_p$ and $\tilde{\sigma}_p$ are oppositely oriented, in which case $\int_{\tilde{\sigma}_p}\tilde{\gamma}=-\int_{\sigma_p}\tilde{\gamma}$, but $\overline{P}(v)$ also changes sign (since it is defined via $\gamma_p$ which in turn is defined via $\sigma_p$). In addition, $\gamma_t$ is independent of the chosen $\sigma_p$ and thus $\overline{Q}$ is also independent of that choice. Therefore the right hand side of (\ref{E3}) is independent of the chosen purely poloidal curve. Finally, we observe that $\Gamma|_{\Sigma}$ and $\gamma$ differ only by a gradient field such that $\int_{\sigma_t}\gamma=\int_{\sigma_t}\Gamma$ from which it follows that $\int_{\sigma_t}\gamma$ only depends on the orientation of $\sigma_t$ (considered as a curve in $\overline{\Omega}$) and it is clear once again that changing the orientation of $\sigma_t$ leaves the right hand side of (\ref{E3}) invariant. Since changing orientation of $\sigma_t$ leaves the right hand side invariant, we are left with understanding how the quantities $\int_{\sigma_t}\tilde{\gamma}$, $\overline{P}(v)$ and $\overline{Q}(v)$ change if we replace $\sigma_t$ by a toroidal curve $\tilde{\sigma}_t$ which is homotopic to $\sigma_t$ in $\overline{\Omega}$. It follows easily from (\ref{E1}) that $\gamma_t$ and hence $\overline{Q}(v)$ remains unchanged. On the other hand, since $\sigma_t$ and $\sigma_p$ form a basis of the fundamental group of $\Sigma$, we may express $\tilde{\sigma}_t=\sigma_t\oplus P\sigma_p$, where we used that we may assume that $\tilde{\sigma}_t$ is homotopic to $\sigma_t$. Then $\int_{\tilde{\sigma}_t}\tilde{\gamma}=\int_{\sigma_t}\tilde{\gamma}+P\int_{\sigma_p}\tilde{\gamma}$ and one can verify that $\tilde{\gamma}_p-\gamma_p=-P\gamma_t$ so that it follows from \Cref{L212} that $\widetilde{P}(v)-\overline{P}(v)=-P\overline{Q}(v)$ (where $\tilde{\gamma}_p$, $\widetilde{P}(v)$ are computed with respect to $\sigma_p$ and $\tilde{\sigma}_t$). We hence see that the additional terms appearing through the change of $\overline{P}(v)$ and $\int_{\sigma_t}\tilde{\gamma}$ when $\sigma_t$ is replaced by $\tilde{\sigma}_t$ cancel each other. We conclude that the right hand side of (\ref{E3}) is independent of the chosen curves $\sigma_p$ and $\sigma_t$, as it should be because the left hand side of (\ref{E3}) is of course independent of any such choices.
\end{rem}
\subsection{Surface helicity and rotational transform}
\label{SS23}
Assume once more that $T^2\cong \Sigma\subset\mathbb{R}^3$ bounds a $C^{1,1}$-solid torus $\Omega\subset\mathbb{R}^3$ and suppose that we fixed some purely poloidal curve $\sigma_p$ and a toroidal curve $\sigma_t$ on $\Sigma$. Now, if $\sigma:\mathbb{R}\rightarrow \Sigma$ is any other closed $C^1$-curve, then we can write it as $\sigma=P\sigma_p\oplus Q\sigma_t$, where $Q$ may be viewed as the "toroidal" turns of $\sigma$ and $P$ may be viewed as the "poloidal" turns of $\sigma$. We recall that according to (\ref{E2}) we can write
\begin{gather}
	\nonumber
	\frac{Q}{\tau}=\lim_{T\rightarrow\infty}\frac{1}{T}\int_{\sigma[0,T]}\gamma_t\text{, }\frac{P}{\tau}=\lim_{T\rightarrow\infty}\int_{\sigma[0,T]}\gamma_p,
\end{gather}
where $\gamma_p$,$\gamma_t$ are determined by the equations in (\ref{E1}). Consequently, if we are interested in the ratio of "poloidal" twists by "toroidal" twists $\frac{P}{Q}$, we obtain the identity (assuming $Q\neq 0$)
\begin{gather}
	\nonumber
	\iota(\sigma):=\frac{P}{Q}=\frac{\lim_{T\rightarrow\infty}\frac{1}{T}\int_{\sigma[0,T]}\gamma_p}{\lim_{T\rightarrow\infty}\frac{1}{T}\int_{\sigma[0,T]}\gamma_t}=\frac{\hat{p}(\sigma)}{\hat{q}(\sigma)},
\end{gather}
where $\hat{p}(\sigma)$,$\hat{q}(\sigma)$ denote the "asymptotic windings" of $\sigma$. These observations motivate the following definition.
\begin{defn}[Rotational transform]
	\label{D215}
	Let $T^2\cong\Sigma\subset\mathbb{R}^3$ bound a $C^{1,1}$-solid torus $\Omega\subset\mathbb{R}^3$. Let $\sigma_p$ and $\sigma_t$ be a given purely poloidal and a toroidal curve respectively and let $v\in C^{0,1}\mathcal{V}(\Sigma)$. If $\hat{q}(x)\neq 0$ for a given $x\in \Sigma$, c.f. \Cref{D211}, we define the \textit{rotational transform} of $v$ at $x$ as
	\begin{gather}
		\nonumber
		\iota_v(x):=\frac{\hat{p}(x)}{\hat{q}(x)},
	\end{gather}
provided the limits in the definition of $\hat{p}(x)$ and $\hat{q}(x)$ exist.
\end{defn}
\begin{rem}
	\label{R216}
	\begin{enumerate}
		\item According to \Cref{L212}, if $v\in C^{0,1}\mathcal{V}_0(\Sigma)$, then $\hat{p}(x)$ and $\hat{q}(x)$ exist for a.e. $x\in \Sigma$ w.r.t. the standard surface measure.
		\item We observe that the definition of the rotational transform depends on the choice of $\sigma_p$ and $\sigma_t$ in the following way: We recall first the defining equations (\ref{E1}) of $\gamma_p$ and $\gamma_t$
		\begin{gather}
			\nonumber
			\int_{\sigma_p}\gamma_t=0=\int_{\sigma_t}\gamma_p\text{, }\int_{\sigma_p}\gamma_p=1=\int_{\sigma_t}\gamma_t.
		\end{gather}
		If $\tilde{\sigma}_p$ is any other closed, purely poloidal curve, then either $\tilde{\sigma}_p$ is homotopic to $\sigma_p$ or otherwise it is homotopic to $-\sigma_p$ (the curve obtained from $\sigma_p$ by inverting the orientation). In both cases $\gamma_t$ remains unchanged, while $\gamma_p$ changes sign and so $\hat{q}$ remains unchanged, while $\hat{p}$ changes sign, i.e. at most the sign of $\iota$ is changed, depending whether $\tilde{\sigma}_p$ is oriented in the same way as $\sigma_p$ or oppositely oriented. On the other hand, if we change $\sigma_t$ to some $\tilde{\sigma}_t$, then by definition either $\tilde{\sigma}_t$ will be homotopic to $\sigma_t$ or $-\sigma_t$ as a curve in $\overline{\Omega}$ (but not necessarily within $\Sigma$). Since $\sigma_p$ and $\sigma_t$ generate the first fundamental group of $\Sigma$, we can then however express $\tilde{\sigma}_t=\pm\sigma_t\oplus P\sigma_p$ for a suitable $P\in \mathbb{Z}$, where the sign in $\pm$ depends on whether $\tilde{\sigma}_t$ is homotopic to $\sigma_t$ or $-\sigma_t$ within $\overline{\Omega}$. Consequently, we see that $\gamma_t$ at most changes sign, while $\tilde{\gamma}_p=\gamma_p\mp P\gamma_t$. Thus, $\hat{q}$ may change its sign, depending whether $\sigma_t$ and $\tilde{\sigma}_t$ are oriented in the same, or opposite, way within $\overline{\Omega}$ and $\hat{\tilde{p}}(x)=\hat{p}(x)\mp P\hat{q}(x)$, where $\hat{\tilde{p}}$ indicates that this quantity is computed with respect to the curve $\tilde{\sigma}_t$. In conclusion $\tilde{\iota}_v(x)=\pm \iota_v(x)\mp P$, where once more the tilde indicates that $\iota_v$ is computed with respect to the toroidal curve $\tilde{\sigma}_t$. In particular, if $\sigma_p$,$\tilde{\sigma}_p$ are two purely poloidal curves of the same orientation and $\sigma_t$,$\tilde{\sigma}_t$ are two poloidal curves of the same orientation (within $\overline{\Omega}$), then $\tilde{\iota}_v(x)=\iota_v(x)+\beta$ for some constant $\beta\in \mathbb{Z}$, which is independent of $x$ and the vector field $v$ under consideration. Hence, if we wish to study a problem which aims to maximise $\iota$ (possibly under some additional constraints), then the maximisers will be independent of the chosen curves (as long as they are oriented in the same way). Finally, also notice that the property $\hat{q}(x)\neq 0$ at a given $x\in \Sigma$ is independent of the chosen curves.
		\end{enumerate}
\end{rem}
We will now first state the main mathematical result of this section, before we shortly discuss the relevance of rotational transform in the context of plasma fusion confinement devices.

In order to do so we state here the (standard) definition of rectifiability of a vector field on a $2$-torus $T^2=\mathbb{R}^2\slash \mathbb{Z}^2$.
\begin{defn}[Rectifiability]
	\label{D217}
	Let $T^2\cong \Sigma$ be a $C^{1,1}$-surface, which is diffeomorphic to the $2$-torus. A vector field $v\in C^{0,1}\mathcal{V}(\Sigma)$ is said to be \textit{rectifiable} if there exists a diffeomorphisms $\psi:\Sigma\rightarrow T^2$ such that $\psi_*v=a\partial_1+b\partial_2$ for suitable constants $a,b\in \mathbb{R}$ and the standard coordinate fields $\partial_1,\partial_2$ on $T^2$, where $\psi_*v$ denotes the corresponding pushforward vector field.
\end{defn}
With this we can formulate the main result of this subsection.
\begin{thm}[Formula for rotational transform]
	\label{T218}
	Let $T^2\cong \Sigma\subset\mathbb{R}^3$ bound a $C^{1,1}$-solid torus $\Omega\subset\mathbb{R}^3$. Let $\sigma_p$ and $\sigma_t$ be a given purely poloidal and a toroidal curve on $\Sigma$ respectively. Further, let $\gamma_p,\gamma_t$ be a basis of $\mathcal{H}(\Sigma)$ determined by the condition (\ref{E1}). Assume that $v\in C^{0,1}\mathcal{V}(\Sigma)$ is rectifiable and that there exists some $f\in C^{0,1}(\Sigma,(0,\infty))$ such that $w:=fv\in C^{0,1}\mathcal{V}_0(\Sigma)$, i.e. $\operatorname{div}_{\Sigma}(w)=0$. If $\overline{Q}(w)\neq 0$, then the rotational transforms of $v$ and $w$ are well-defined at a.e. $x\in \Sigma$ and there holds
	\begin{gather}
		\nonumber
		\iota_v(x)=\iota_w(x)=\frac{\int_{\Sigma}w\cdot \gamma_pd\sigma}{\int_{\Sigma}w\cdot \gamma_t d\sigma}=\frac{\overline{P}(w)}{\overline{Q}(w)},
	\end{gather}
	where the subscript indicates whose rotational transform is considered. In particular, $\iota_v(x)$ is independent of $x$.
\end{thm}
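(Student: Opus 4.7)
The plan is to exploit the rectifiability of $v$ to reduce the computation of $\hat{p}_v(x),\hat{q}_v(x)$ to the integration of constant-coefficient $1$-forms along straight lines in $\mathbb{R}^2$, thereby showing that both quantities are the same constant for every $x\in\Sigma$; then to transfer this information from $v$ to $w$ by a time reparametrisation; and finally to identify the resulting constant through \Cref{L212}. Let $\psi:\Sigma\to T^2$ denote the diffeomorphism furnished by rectifiability, so that $\psi_*v=a\partial_1+b\partial_2$.

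For the first step, viewing the harmonic vector field $\gamma_t$ (via the metric) as a closed $1$-form on $\Sigma$, its cohomology class is uniquely determined by the periods (\ref{E1}). Since $\psi^*:H^1(T^2,\mathbb{R})\to H^1(\Sigma,\mathbb{R})$ is an isomorphism and $\{[dx^1],[dx^2]\}$ is a basis of $H^1(T^2,\mathbb{R})$ consisting of constant-coefficient representatives, we may write $\gamma_t=\psi^*(d_1\,dx^1+d_2\,dx^2)+dh_t$ and $\gamma_p=\psi^*(c_1\,dx^1+c_2\,dx^2)+dh_p$ for suitable real constants $c_i,d_i$ and functions $h_p,h_t\in C^\infty(\Sigma)$. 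Lifting the trajectory $\gamma^v_x$ of $v$ starting at $x$ through $\psi$ to the universal cover $\mathbb{R}^2$ of $T^2$, it becomes the straight line $t\mapsto\psi(x)+t(a,b)$, and therefore
\[
\int_{\gamma^v_x[0,T]}\gamma_t=(d_1a+d_2b)T+h_t(\gamma^v_x(T))-h_t(x),
\]
with an analogous identity for $\gamma_p$. Boundedness of $h_p,h_t$ on the compact surface $\Sigma$ then yields the constant values $\hat{q}_v(x)=d_1a+d_2b$ and $\hat{p}_v(x)=c_1a+c_2b$ for every $x\in\Sigma$.

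Next, since $f$ is continuous and strictly positive on the compact surface $\Sigma$, it is bounded away from $0$ and $\infty$, and the map $\sigma(s):=\int_0^sf(\gamma^w_x(u))\,du$ is a $C^1$-diffeomorphism of $[0,\infty)$ onto itself satisfying $\gamma^w_x(s)=\gamma^v_x(\sigma(s))$. A direct change of variables yields
\[
\int_{\gamma^w_x[0,S]}\omega=\int_{\gamma^v_x[0,\sigma(S)]}\omega
\]
for every $1$-form $\omega$. Applied to $\omega=\gamma_p,\gamma_t$ and combined with the fact that $\hat{p}_w(x),\hat{q}_w(x)$ exist for a.e.\ $x\in\Sigma$ by \Cref{L212}, the identities from the first step force
\[
\hat{p}_w(x)=(c_1a+c_2b)\lambda(x)\quad\text{and}\quad \hat{q}_w(x)=(d_1a+d_2b)\lambda(x)
\]
almost everywhere, where $\lambda(x):=\lim_{S\to\infty}\sigma(S)/S$ exists almost everywhere as a by-product and satisfies $\lambda(x)\geq\min_\Sigma f>0$ whenever defined. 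Consequently $\iota_v(x)=\iota_w(x)=(c_1a+c_2b)/(d_1a+d_2b)$ wherever these expressions are meaningful.

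To identify the constant, set $c:=(c_1a+c_2b)/(d_1a+d_2b)$; then $\hat{p}_w(x)=c\,\hat{q}_w(x)$ almost everywhere, so integrating over $\Sigma$ and invoking \Cref{L212} gives $\overline{P}(w)=c\,\overline{Q}(w)$. The hypothesis $\overline{Q}(w)\neq 0$ forces $d_1a+d_2b\neq 0$ (otherwise $\hat{q}_w\equiv 0$ almost everywhere and $\overline{Q}(w)=0$) and yields $c=\overline{P}(w)/\overline{Q}(w)$, which coincides with $\int_\Sigma w\cdot\gamma_p\,d\sigma\big/\int_\Sigma w\cdot\gamma_t\,d\sigma$ by another appeal to \Cref{L212}. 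The main technical obstacle lies in the cohomological reduction of the first step --- expressing the harmonic $1$-form $\gamma_t$ on $\Sigma$ as the pullback of a constant-coefficient form on $T^2$ modulo an exact remainder; the reparametrisation and integration steps are then essentially formal.
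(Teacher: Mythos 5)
Your proof is correct, but its core step is genuinely different from the one in the paper. The paper first works with the divergence-free field $w$ and splits into two cases according to whether the field lines of the rectifiable field $v$ are all closed or none are: in the closed case it reads off $\hat{p}_w(x)=P/\tau_x$, $\hat{q}_w(x)=Q/\tau_x$ from the common homotopy type $P\sigma_p\oplus Q\sigma_t$ of the orbits, and in the dense case it proves that the flow of $w$ is ergodic (by conjugating to an irrational linear flow on the flat torus) and deduces that the flow-invariant functions $\hat{p}_w,\hat{q}_w$ are a.e.\ constant; only afterwards does it transfer the conclusion to $v$ via the time reparametrisation $\hat{p}_v=\hat{p}_w/\hat{f}$, $\hat{q}_v=\hat{q}_w/\hat{f}$. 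You instead start from $v$ and compute $\hat{p}_v,\hat{q}_v$ in closed form by writing $\gamma_p,\gamma_t$ as pullbacks through $\psi$ of constant-coefficient forms on $T^2$ modulo exact remainders and lifting the orbit to a straight line in the universal cover; this removes both the closed/dense dichotomy and the ergodicity argument, and yields the stronger statement that $\hat{p}_v,\hat{q}_v$ take the same constant values at \emph{every} point of $\Sigma$, not merely almost every point. The reparametrisation step and the final identification of the constant through \Cref{L212} are essentially the same in both arguments (yours runs the reparametrisation from $v$ to $w$, the paper's from $w$ to $v$). Two cosmetic points: the exact remainder $h_t$ is only as regular as $\gamma_t$ and $\psi$ permit, not $C^\infty$, but only its boundedness on the compact surface is used; and the a.e.\ existence of $\lambda(x)=\lim_{S\to\infty}\sigma(S)/S$ can be obtained directly from the Birkhoff theorem applied to $f$ along the area-preserving flow of $w$ (this is the quantity $\hat{f}$ in the paper's Step 3), which spares you the small detour of first ruling out $d_1a+d_2b=0$ before extracting $\lambda$ as a by-product.
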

\begin{rem}
	\label{R219}
	In general, for a given vector field $v\in C^{0,1}\mathcal{V}(\Sigma)$, even if the rotational transform $\iota_v(x)$ is an element of $L^1(\Sigma)$, it is not true that $\frac{1}{|\Sigma|}\int_{\Sigma}\iota(x)d\sigma(x)=\frac{\overline{P}(v)}{\overline{Q}(v)}$ because the average of a quotient does not need to coincide with the quotient of the respective averages.
\end{rem}
As an immediate consequence of \Cref{T218} and \Cref{T213}, we obtain the following relation between surface helicity and rotational transform.
\begin{cor}[Helicity and rotational transform]
	\label{C220}
	Let $T^2\cong \Sigma\subset\mathbb{R}^3$ bound a $C^{1,1}$-solid torus $\Omega\subset\mathbb{R}^3$. Let $\sigma_p$ be a purely poloidal curve on $\Sigma$ and let $\sigma_t$ be a toroidal curve on $\Sigma$ which bounds a bounded $C^{1,1}$-surface $\mathcal{A}$ outside of $\Omega$. Assume further that $v\in C^{0,1}\mathcal{V}(\Sigma)$ is rectifiable and that there exists some $f\in C^{0,1}(\Sigma,(0,\infty))$ such that $w:=fv\in C^{0,1}\mathcal{V}_0(\Sigma)$, i.e. $\operatorname{div}_{\Sigma}(w)=0$. If $\overline{Q}(w)\neq 0$, then
	\begin{gather}
		\nonumber
		\frac{\mathcal{H}(w)}{|\Sigma|^2}=\iota_w\overline{Q}^2(w)\left(\int_{\sigma_t}\gamma\right)\left(\int_{\sigma_p}\tilde{\gamma}\right),
	\end{gather}
	where $\iota_w$ is the constant rotational transform of $w$ and $\gamma$,$\tilde{\gamma}$ are defined as in \Cref{T213} and in particular are independent of $w$.
\end{cor}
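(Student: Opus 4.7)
The plan is to obtain the stated identity by directly chaining the two preceding results \Cref{T213} and \Cref{T218}, which together already encode all the structural information needed. Since $w\in C^{0,1}\mathcal{V}_0(\Sigma)$ and $\sigma_t$ bounds a $C^{1,1}$-surface $\mathcal{A}\subset\mathbb{R}^3\setminus\overline{\Omega}$, the hypotheses of the second (simplified) formula in \Cref{T213} are satisfied for the vector field $w$. Applying that formula directly yields
\begin{gather}
\nonumber
\frac{\mathcal{H}(w)}{|\Sigma|^2}=\overline{Q}(w)\overline{P}(w)\int_{\sigma_t}\gamma\int_{\sigma_p}\tilde{\gamma},
\end{gather}
so the only remaining task is to rewrite the product $\overline{Q}(w)\overline{P}(w)$ in a form that exposes the rotational transform.

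For that rewriting I would invoke \Cref{T218}. Its hypotheses are exactly the ones assumed here: $v$ is rectifiable, $w=fv$ with $f>0$ Lipschitz, $w$ is divergence-free, and $\overline{Q}(w)\neq 0$. The theorem delivers a pointwise-a.e.\ constant rotational transform
\begin{gather}
\nonumber
\iota_w(x)=\frac{\overline{P}(w)}{\overline{Q}(w)},
\end{gather}
so in particular $\overline{P}(w)=\iota_w\,\overline{Q}(w)$, where I denote by $\iota_w$ the common a.e.\ value. Substituting this relation into the expression above collapses $\overline{Q}(w)\overline{P}(w)$ into $\iota_w\,\overline{Q}^2(w)$ and produces exactly the asserted identity.

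Because every non-trivial analytic content has already been absorbed into \Cref{T213} and \Cref{T218}, there is essentially no obstacle: the only point that warrants a line of justification is that the division $\overline{P}(w)/\overline{Q}(w)$ is legitimate, which is guaranteed by the standing assumption $\overline{Q}(w)\neq 0$. I would also briefly remark that the quantities $\int_{\sigma_t}\gamma$ and $\int_{\sigma_p}\tilde{\gamma}$ are, as noted in \Cref{R214}, independent of the chosen representatives of the homotopy classes and depend only on $\Sigma$ (and the fixed orientations), so the right-hand side is well-defined, and that $\iota_w$ is independent of the base point by \Cref{T218}. With these observations recorded, the proof is a one-line combination of the two theorems.
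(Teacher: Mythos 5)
Your proof is correct and matches the paper's intent exactly: the paper presents \Cref{C220} as an immediate consequence of \Cref{T213} and \Cref{T218} without writing out a separate argument, and your chaining of the simplified formula from \Cref{T213} with the constancy $\iota_w=\overline{P}(w)/\overline{Q}(w)$ from \Cref{T218} is precisely that deduction.
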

\begin{rem}
	\label{R221}
	\begin{enumerate}
		\item We point out shortly that a corresponding formula for the vector field $v$ does not need to hold necessarily, since we require the divergence-free property in order to invoke \Cref{T213}, which is the reason why the conclusion is solely formulated for the vector field $w$.
		\item It is also customary to call a vector field $v\in C^{0,1}\mathcal{V}(\Sigma)$ semi-rectifiable, provided there exists a (strictly) positive $C^{0,1}$-function $f$ such that $fv$ is rectifiable, c.f. \cite{PPS22}. Hence \Cref{C220} can be reformulated by assuming that a given $w\in C^{0,1}\mathcal{V}_0(\Sigma)$ is semi-rectifiable.
	\end{enumerate}
\end{rem}
\begin{rem}[Plasma equilibria]
	\label{R222}
	In the realm of plasma physics one is interested in steady solutions of the equations of ideal magnetohydrodynamics with a resting plasma \cite[Equations (1) \& (3)]{Hel14}. Such plasma equilibria are determined by the PDE
	\begin{gather}
		\nonumber
		B\times \operatorname{curl}(B)=\nabla p\text{ and }\operatorname{div}(B)=0\text{ in }\mathcal{P}\text{, }B\parallel \partial \mathcal{P}
	\end{gather}
	where $\mathcal{P}$ denotes the region containing the plasma ("plasma region"), $B$ is the corresponding magnetic field and $p$ is the pressure. According to Arnold's structure theorem, \cite[Chapter II Theorem 1.2]{AK21}, \cite[Section 1.2]{Arnold2014}, if $B$ is analytic and $B$ and $\operatorname{curl}(B)$ are not everywhere collinear, the domain $\mathcal{P}$ will decompose, after removing a $2$-dimensional analytic subset from it, into a finite number of chambers, each of which is invariant under the flow of $B$ and is one of the following two types:
	\begin{enumerate}
		\item Either the chamber foliates further into toroidal surfaces, each of which is again invariant under the flow of $B$ and such that on each such surface $B$ is rectifiable and hence in particular either all field lines of $B$ are closed (with the same period) or all of the field lines of $B$ are dense within the surface.
		\item Or the chamber foliates into cylindrical surfaces diffeomorphic to $S^1\times \mathbb{R}$, each of which is invariant under the flow of $B$ and such that all field lines of $B$ are closed on any such surface.
	\end{enumerate}
	Of particular interest are the invariant toroidal surfaces. In particular, in order to counter the plasma particle drift in a fusion plasma it is desirable to have a high rotational transform \cite{Spi58}, see also \Cref{SB} for a precise relationship between the different notions of rotational transform. We therefore wish to understand the quantity $\iota_B$ on regular toroidal level sets of the pressure $p$. We note that if $\Sigma$ is a regular toroidal level set of $p$, then $|\nabla p|\neq 0$ on all of $\Sigma$ and in fact, letting $f:=\frac{1}{|\nabla p|}$ one can verify that $\widetilde{B}:=fB$ is a divergence-free field on $\Sigma$ (divergence-free with respect to the surface measure on $\Sigma$), c.f. \cite{PPS22}. It follows therefore from \Cref{T218} that we can compute the rotational transform of $B$ on a given regular toroidal level set of $\Sigma$ as
	\begin{gather}
		\nonumber
		\iota_B(x)=\frac{\int_{\Sigma}\widetilde{B}\cdot \gamma_pd\sigma}{\int_{\Sigma}\widetilde{B}\cdot \gamma_td\sigma}
	\end{gather}
	for a fixed purely poloidal curve $\sigma_p$ and a toroidal curve $\sigma_t$ bounding a surface outside of the domain enclosed by the level set and their corresponding induced harmonic fields $\gamma_p$,$\gamma_t\in \mathcal{H}(\Sigma)$. Note once more, that the rotational transform of $B$ (on any fixed regular toroidal surface) is constant.
\end{rem}
\subsection{Surface helicity optimisation}
\label{SS24}
In \cite{CDGT002} Cantarella, DeTurck, Gluck and Teytel considered the following optimisation in $3$-dimensional space: First, given a smooth, bounded domain $\Omega\subset \mathbb{R}^3$, they show that there exists some smooth vector field $v$ on $\Omega$ with $\operatorname{div}(v)=0$, $v\parallel\partial\Omega$ such that
\begin{gather}
	\nonumber
	\frac{\mathcal{H}_{3D}(v)}{\|v\|^2_{L^2(\Omega)}}=\sup_{\substack{w\neq 0\\ \operatorname{div}(w)=0\\ w\parallel\partial\Omega}}\frac{\mathcal{H}_{3D}(w)}{\|w\|^2_{L^2(\Omega)}}=:\lambda(\Omega),
\end{gather}
where $\mathcal{H}_{3D}$ denotes the standard $3$-dimensional Biot-Savart helicity, see \cite[Section 9]{CDG00} for a proof of the above fact. Further, \cite[Theorem B]{CDG00}, implies that $|\lambda(\Omega)|\leq c_0\sqrt[3]{|\Omega|}$ for an absolute constant $c_0$ independent of $\Omega$. In view of this it is natural to ask whether there exists a bounded, smooth domain $\Omega$ which maximises $\lambda(\Omega)$ among all other domains of the same volume. Physically this amounts to finding a domain $\Omega$ which supports a magnetic field (tangent to the boundary of $\Omega$) whose field lines are (on average) as entangled as possible among all other magnetic fields and all other domains of the same volume.

Some necessary conditions, assuming the existence of optimal domains, have been derived in \cite[Theorem D]{CDGT002}. Up to this day the question of existence of optimal domains for this problem has remained open, even though some recent advances have been made within certain classes of domains, c.f. \cite{EGPS23},\cite{G24OptimalHelicityDomain}.
\newline
\newline
Here we consider a corresponding problem for the surface helicity. The first result is the following.
\begin{thm}[Helicity optimisers exist]
	\label{T223}
	Let $\Sigma\subset\mathbb{R}^3$ be a closed, connected $C^{1,1}$-surface. Then there exists some $v_{\pm}\in L^2\mathcal{V}_0(\Sigma)$ with $\|v_{\pm}\|_{L^2(\Sigma)}=1$ and such that
	\begin{gather}
		\label{E4}
		\pm\mathcal{H}(v_{\pm})=\max_{\substack{w\neq 0\\
		w\in L^2\mathcal{V}_0(\Sigma)}}\frac{\pm\mathcal{H}(w)}{\|w\|^2_{L^2(\Sigma)}}=:\Lambda_{\pm}(\Sigma).
	\end{gather}
	Further,
	\begin{enumerate}
		\item If $\Sigma\cong S^2$, then every non-zero element of $L^2\mathcal{V}_0(\Sigma)$ realises the above Rayleigh quotients.
		\item If $\Sigma\not\cong S^2$, then $v_{\pm}\in \mathcal{H}(\Sigma)$ (i.e. $v_{\pm}$ are additionally curl-free).
	\end{enumerate}
\end{thm}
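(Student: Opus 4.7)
My plan is to realize $\mathcal{H}$ as the quadratic form of a compact self-adjoint operator on the Hilbert space $L^2\mathcal{V}_0(\Sigma)$ and then to invoke the spectral theorem. Combining \Cref{R23} with the continuous embedding $L^2(\Sigma)\hookrightarrow L^{4/3}(\Sigma)$ shows that $\mathcal{H}_c$ is a bounded bilinear form on $L^2\mathcal{V}(\Sigma)\times L^2\mathcal{V}(\Sigma)$; its symmetry is a consequence of Fubini together with the antisymmetry $\frac{x-y}{|x-y|^3}=-\frac{y-x}{|y-x|^3}$ and the cyclic invariance of the scalar triple product. By Riesz representation there is a bounded self-adjoint operator $T$ on $L^2\mathcal{V}(\Sigma)$ with $\mathcal{H}_c(v,w)=\langle v,Tw\rangle_{L^2(\Sigma)}$, and composition with the orthogonal projection onto the closed subspace $L^2\mathcal{V}_0(\Sigma)$ yields a bounded self-adjoint operator $T_0$ on this subspace whose Rayleigh quotient coincides with $\mathcal{H}/\|\cdot\|_{L^2}^2$.

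The main analytic step is to establish compactness of $T_0$. The Biot-Savart kernel $(x-y)/|x-y|^3$ has singularity of order $|x-y|^{-2}$, which sits at the integrability borderline on a $2$-dimensional surface, so $T$ is not Hilbert-Schmidt. Truncating the kernel to $\{|x-y|>\varepsilon\}$ however gives a continuous and hence Hilbert-Schmidt (in particular compact) operator $T_\varepsilon$; a Schur-type estimate combined with the a priori bound $\operatorname{BS}_\Sigma\colon L^{4/3}\mathcal{V}(\Sigma)\to L^4\mathcal{V}(\Sigma)$ from \Cref{D21} then shows $\|T-T_\varepsilon\|_{L^2\to L^2}\to 0$ as $\varepsilon\to 0$, exhibiting $T$ as a norm limit of compact operators. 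Equivalently, $\operatorname{BS}_\Sigma$ acts essentially as a first-order fractional integration operator on the surface, so it maps $L^2$ continuously into some $W^{s,2}\mathcal{V}(\Sigma)$ with $s>0$, and compactness then follows from the Rellich-Kondrachov embedding on the compact manifold $\Sigma$. I expect this compactness step to be the principal analytic obstacle.

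Once $T_0$ is compact and self-adjoint, the spectral theorem furnishes unit-norm eigenvectors $v_\pm\in L^2\mathcal{V}_0(\Sigma)$ realizing the extreme eigenvalues $\pm\Lambda_\pm(\Sigma)$, which proves existence. For part (i): if $\Sigma\cong S^2$ then $H^1(S^2)=0$ forces $\mathcal{H}(\Sigma)=\{0\}$, so Hodge decomposition implies every $v\in L^2\mathcal{V}_0(\Sigma)$ is co-exact, and the second part of \Cref{T25} then gives $\mathcal{H}_c\equiv 0$; hence $T_0=0$, $\Lambda_\pm=0$, and every non-zero unit vector realizes both Rayleigh quotients. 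For part (ii) with $g(\Sigma)\geq 1$: the Hodge decomposition $L^2\mathcal{V}_0(\Sigma)=\mathcal{G}\oplus\mathcal{H}(\Sigma)$, where $\mathcal{G}$ denotes the co-exact subspace, combined with the second part of \Cref{T25} gives $\mathcal{G}\subseteq\ker T_0$, so every non-zero eigenvector of $T_0$ lies in $\mathcal{G}^\perp\cap L^2\mathcal{V}_0(\Sigma)=\mathcal{H}(\Sigma)$. Writing any $v=v_c+v_h$ with $v_c\in\mathcal{G}$ and $v_h\in\mathcal{H}(\Sigma)$ one has $\mathcal{H}(v)=\mathcal{H}(v_h)$ and $\|v\|_{L^2}^2\geq\|v_h\|_{L^2}^2$, so the extremization reduces to the finite-dimensional subspace $\mathcal{H}(\Sigma)$, on whose compact unit sphere the extrema are attained by continuity, yielding $v_\pm\in\mathcal{H}(\Sigma)$.
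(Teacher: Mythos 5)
Your overall strategy (Hodge decomposition plus the co-exactness statement of \Cref{T25} to reduce the extremisation to the finite-dimensional space $\mathcal{H}(\Sigma)$) is the same as the paper's, and your treatment of part (i) is correct. However, there is a genuine gap in part (ii): your reduction only shows that \emph{if} $\Lambda_+(\Sigma)>0$ then the supremum is attained on the compact unit sphere of $\mathcal{H}(\Sigma)$ (and likewise for $\Lambda_-$). You never exclude the degenerate scenario in which, say, $\mathcal{H}\le 0$ on all of $L^2\mathcal{V}_0(\Sigma)$, so that $\Lambda_+(\Sigma)=0$ is attained only by co-exact fields and by no element of $\mathcal{H}(\Sigma)$ — in which case the claim $v_+\in\mathcal{H}(\Sigma)$ fails. \Cref{T25} alone only guarantees that \emph{some} field has non-zero helicity, i.e.\ $\max\{\Lambda_+,\Lambda_-\}>0$, not that both are positive. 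The paper closes exactly this gap by an explicit computation: with $\gamma$ the harmonic projection of $\Gamma|_\Sigma$ for $\Gamma\in\mathcal{H}_N(\Omega)\setminus\{0\}$ and $\tilde\gamma:=\gamma\times\mathcal{N}$, one has $2\mathcal{H}_c(\gamma,\tilde\gamma)=\|\gamma\|^2_{L^2(\Sigma)}$ (from Step 1 of the proof of \Cref{T25}) and $\mathcal{H}(\tilde\gamma)=0$ (\Cref{L31}), whence $\mathcal{H}\bigl(\tfrac{1}{|\beta|}\gamma+\beta\tilde\gamma\bigr)=\beta^{-2}\mathcal{H}(\gamma)+\operatorname{sign}(\beta)\|\gamma\|^2_{L^2(\Sigma)}$ takes both signs for $|\beta|$ large. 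Some such argument is indispensable, and it is also what the subsequent results (\Cref{T224}, \Cref{T225}) cite from this proof.

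Separately, the step you flag as the principal analytic obstacle — compactness of $T_0=\pi\circ\operatorname{BS}_\Sigma$ — is both unnecessary for this theorem and easier than your sketch suggests. Your own observation that the co-exact fields lie in $\ker T_0$ (self-adjointness plus \Cref{T25}) already forces $\operatorname{ran}(T_0)\subseteq\mathcal{H}(\Sigma)$, a finite-dimensional space, so $T_0$ is trivially compact; this is precisely how the paper proves \Cref{T224}. As written, your truncation/Schur argument is also shaky: the raw kernel $|x-y|^{-2}$ is not absolutely integrable on a $2$-surface, and one needs the tangential cancellation $|\mathcal{N}(y)\cdot(x-y)|\le c|x-y|^2$ (used in \Cref{AS1}) to reduce the effective singularity to $|x-y|^{-1}$ before any Schur-type bound or smoothing estimate applies.
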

To state the next result we denote by $\pi$ the $L^2$-orthogonal projection from $L^2\mathcal{V}(\Sigma)$ onto $L^2\mathcal{V}_0(\Sigma)$.
\begin{thm}[Spectral theoretical characterisation of helicity optimisers]
	\label{T224}
	Let $\Sigma\subset\mathbb{R}^3$ be a closed, connected $C^{1,1}$-surface. Then the operator $\pi\circ \operatorname{BS}_{\Sigma}:L^2\mathcal{V}_0(\Sigma)\rightarrow L^2\mathcal{V}_0(\Sigma)$ is compact and self-adjoint and admits a largest positive and smallest negative eigenvalue. Further, $v\in L^2\mathcal{V}_0(\Sigma)$ maximises helicity among all other fields $w\in L^2\mathcal{V}_0(\Sigma)$ of the same $L^2$-norm if and only if $v$ is an eigenfield of $\pi\circ \operatorname{BS}_{\Sigma}$ corresponding to the largest positive eigenvalue. A corresponding result holds for the helicity minimisation problem.
\end{thm}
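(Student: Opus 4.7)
The plan is to recognise $\mathcal{H}(v)/\|v\|_{L^2(\Sigma)}^2$ as the Rayleigh quotient of the bounded operator $\pi\circ\operatorname{BS}_\Sigma$ on the Hilbert space $L^2\mathcal{V}_0(\Sigma)$, to show that this operator is compact and self-adjoint, and then to invoke the standard variational characterisation of extremal eigenvalues of a compact self-adjoint operator. The continuity assertion of \Cref{D21} with $p=2$ and $q=2$ already delivers that $\operatorname{BS}_\Sigma$ is a bounded endomorphism of $L^2\mathcal{V}(\Sigma)$, so composing with the orthogonal projection $\pi$ onto the divergence-free subspace yields a bounded operator on $L^2\mathcal{V}_0(\Sigma)$. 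Symmetry of cross-helicity $\mathcal{H}_c(v,w)=\mathcal{H}_c(w,v)$ follows from Fubini combined with the cyclic identity $a\cdot(b\times c)=b\cdot(c\times a)$ applied inside the double integral in \Cref{D22}. Since the elements of $L^2\mathcal{V}_0(\Sigma)$ are fixed by $\pi$, this symmetry is precisely
\[
\langle v,\pi\circ\operatorname{BS}_\Sigma w\rangle_{L^2(\Sigma)}=\mathcal{H}_c(v,w)=\mathcal{H}_c(w,v)=\langle\pi\circ\operatorname{BS}_\Sigma v,w\rangle_{L^2(\Sigma)},
\]
which is the self-adjointness of $\pi\circ\operatorname{BS}_\Sigma$.

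For compactness, which I expect to be the main technical obstacle, the natural approach is to approximate $\operatorname{BS}_\Sigma$ by truncated operators: for $\varepsilon>0$ set
\[
\operatorname{BS}_\Sigma^\varepsilon(v)(x):=\frac{1}{4\pi}\left(\int_{\{y\in\Sigma\,:\,|x-y|>\varepsilon\}}v(y)\times\frac{x-y}{|x-y|^3}d\sigma(y)\right)^\parallel.
\]
Each $\operatorname{BS}_\Sigma^\varepsilon$ has a bounded matrix-valued kernel on the compact set $\Sigma\times\Sigma$, hence is Hilbert--Schmidt, hence compact on $L^2\mathcal{V}(\Sigma)$. The work then consists in showing $\|\operatorname{BS}_\Sigma-\operatorname{BS}_\Sigma^\varepsilon\|_{L^2\to L^2}\rightarrow 0$ as $\varepsilon\to 0$; this is genuinely delicate because on the $2$-surface $\Sigma$ the bare kernel $|x-y|^{-2}$ is exactly critical for $L^2(\Sigma\times\Sigma)$-integrability, so the standard Hilbert--Schmidt criterion fails at the diagonal. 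The key cancellation is that for $y$ close to $x$ on a $C^{1,1}$-surface the chord $x-y$ is nearly tangent to $\Sigma$, so the outer tangential projection $(\cdot)^\parallel$ at $x$ annihilates the leading singular contribution and lowers the effective order of the near-diagonal piece below the critical threshold; a Schur-type estimate in local orthonormal frames then yields the operator-norm smallness of $\operatorname{BS}_\Sigma-\operatorname{BS}_\Sigma^\varepsilon$. Since the space of compact operators is norm-closed in the bounded operators, $\operatorname{BS}_\Sigma$ is compact on $L^2\mathcal{V}(\Sigma)$ and composing with $\pi$ preserves compactness.

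With self-adjointness and compactness established, the Hilbert--Schmidt spectral theorem provides a complete orthonormal eigenbasis of $\pi\circ\operatorname{BS}_\Sigma$ with real eigenvalues accumulating only at $0$, and the extremal Rayleigh quotients of $\pi\circ\operatorname{BS}_\Sigma$ are attained precisely on the eigenspaces corresponding to $\lambda_{\max}$ and $\lambda_{\min}$. For any $v\in L^2\mathcal{V}_0(\Sigma)$,
\[
\langle v,\pi\circ\operatorname{BS}_\Sigma v\rangle_{L^2(\Sigma)}=\langle v,\operatorname{BS}_\Sigma v\rangle_{L^2(\Sigma)}=\mathcal{H}_c(v,v)=\mathcal{H}(v),
\]
so that $\Lambda_+(\Sigma)=\lambda_{\max}$ and $\Lambda_-(\Sigma)=-\lambda_{\min}$, and the equivalence \emph{``$v$ realises the helicity supremum under the unit $L^2$-norm constraint $\Leftrightarrow$ $v$ is an eigenfunction for $\lambda_{\max}$''} drops out of the standard variational theory. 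The existence of the extremisers $v_\pm$ guaranteed by \Cref{T223} then forces these extremal values to be actually attained, and \Cref{T25} ensures that $\pi\circ\operatorname{BS}_\Sigma$ is genuinely indefinite whenever $g(\Sigma)\geq 1$, so that $\lambda_{\max}>0>\lambda_{\min}$ in that case. The minimisation statement is obtained by repeating the argument with $-\pi\circ\operatorname{BS}_\Sigma$, which is the only place where the asymmetric roles of $\Lambda_+$ and $\Lambda_-$ enter the proof.
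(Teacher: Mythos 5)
Your self-adjointness and spectral-theory steps match the paper, but your compactness argument takes a genuinely different and substantially heavier route. The paper never proves that $\operatorname{BS}_{\Sigma}$ itself is compact on $L^2$: instead it observes that, by the second assertion of \Cref{T25}, $\mathcal{H}_c(\operatorname{grad}_{\Sigma}(f)\times\mathcal{N},w)=0$ for every co-exact field and every $w\in L^2\mathcal{V}_0(\Sigma)$, so that $(\pi\circ\operatorname{BS}_{\Sigma})(w)$ is $L^2$-orthogonal to all co-exact fields and hence, by the Hodge decomposition, lies in the finite-dimensional space $\mathcal{H}(\Sigma)$. The operator is therefore of finite rank, and compactness is immediate; moreover this finite-rank structure is exactly what the proof of \Cref{T225} later exploits to write $\pi\circ\operatorname{BS}_{\Sigma}$ as a $2\times 2$ matrix. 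Your truncation scheme is completable — the tangential projection together with the $C^{1,1}$ estimates $|\mathcal{N}(y)\cdot(x-y)|\leq c|x-y|^2$ and $|\mathcal{N}(y)-\mathcal{N}(x)|\leq L|x-y|$ reduces the effective kernel to $c_0|x-y|^{-1}$ (this is precisely \Cref{AL1}), and a Schur test on the near-diagonal piece then gives $\|\operatorname{BS}_{\Sigma}-\operatorname{BS}_{\Sigma}^{\varepsilon}\|_{L^2\to L^2}\lesssim\varepsilon$ — but it proves a stronger statement than needed at the cost of the hardest analysis in your write-up, and it does not reveal the finite-dimensionality of the range. One genuine imprecision: \Cref{T25} only guarantees that helicity is not identically zero, which yields a nonzero eigenvalue of \emph{some} sign; to conclude that $\pi\circ\operatorname{BS}_{\Sigma}$ has both a positive and a negative eigenvalue you need the explicit construction in the proof of \Cref{T223}, where $\hat{\gamma}_{\pm}=\alpha\gamma+\beta\tilde{\gamma}$ with $\beta\gg 1$ resp. $\beta\ll-1$ produces fields with $\mathcal{H}(\hat{\gamma}_{\pm})\cdot(\pm 1)>0$; citing \Cref{T223} for the attained extremisers does not by itself rule out $\Lambda_-(\Sigma)=0$, so you should invoke that construction explicitly.
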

\begin{rem}
	In general, it is highly non-trivial to compute the corresponding eigenfields of $\pi\circ \operatorname{BS}_{\Sigma}$ which realise the Rayleigh quotients in (\ref{E4}) even for rotationally symmetric surfaces. However, we provide in \Cref{SC} an, in comparison, easy way to compute these eigenfields as well as the corresponding values $\Lambda_+(\Sigma)$, $\Lambda_-(\Sigma)$ if the surface $\Sigma$ is toroidal. This will allow us to obtain exact solutions in the case of rotationally symmetric surfaces.
\end{rem}
We now deal with the question of optimising the following quantity, recall (\ref{E4}),
\begin{gather}
	\label{ExtraExtraGleich}
	\Lambda(\Sigma):=\max\{\Lambda_+(\Sigma),\Lambda_-(\Sigma)\}=\max_{\substack{w\neq 0\\
			w\in L^2\mathcal{V}_0(\Sigma)}}\frac{|\mathcal{H}(w)|}{\|w\|^2_{L^2(\Sigma)}}
\end{gather}
among $C^{1,1}$-closed surfaces. Note that according to \Cref{T25} we have $\Lambda(\Sigma)\geq 0$ with equality if and only if $\Sigma\cong S^2$. Hence, $\Sigma$ is a global minimiser of $\Lambda(\Sigma)$ if and only if $\Sigma\cong S^2$. Here, due to their relevance in plasma physics, we want to focus on surfaces $\Sigma\cong T^2$.
\begin{thm}[Optimal non-trivial lower bound for $\Lambda(\Sigma)$]
	\label{T225}
	Let $T^2\cong\Sigma\subset\mathbb{R}^3$ be a closed, connected $C^{1,1}$-surface and let $\Omega$ be the domain of finite volume bounded by $\Sigma$. Then
	\begin{gather}
		\nonumber
		\Lambda(\Sigma)\geq \frac{1}{2}.
	\end{gather}
	Equality holds if and only if $\mathcal{H}(\gamma)=0$, where $\gamma$ is the $L^2$-orthogonal projection of $\Gamma|_{\Sigma}$ onto $\mathcal{H}(\Sigma)$ and $\Gamma$ is any fixed non-zero harmonic Neumann field of $\Omega$, i.e. any $H^1$-regular, curl-free, div-free vector field on $\Omega$ which is tangent to $\partial\Omega=\Sigma$.
\end{thm}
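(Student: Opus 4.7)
The plan is to reduce the computation of $\Lambda(\Sigma)$ to a $2\times 2$ spectral problem on $\mathcal{H}(\Sigma)$, and then to evaluate the entries of that matrix using the structural formula of \Cref{T213} combined with the pointwise rotation $v\mapsto v\times\mathcal{N}$. First, by the Hodge decomposition every $v\in L^2\mathcal{V}_0(\Sigma)$ splits $L^2$-orthogonally as $v=v_H+v_c$ with $v_H\in\mathcal{H}(\Sigma)$ and $v_c$ co-exact, and the second statement of \Cref{T25} together with the symmetry of $\mathcal{H}_c$ (which follows from a direct manipulation of the Biot--Savart kernel) gives $\mathcal{H}(v)=\mathcal{H}(v_H)$ while $\|v\|_{L^2}^2\geq \|v_H\|_{L^2}^2$. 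Consequently $\Lambda(\Sigma)=\sup\{|\mathcal{H}(w)|/\|w\|_{L^2}^2:w\in\mathcal{H}(\Sigma)\setminus\{0\}\}$, and the problem reduces to understanding the $2$-dimensional space $\mathcal{H}(\Sigma)$.

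I would then adopt as orthonormal basis of $\mathcal{H}(\Sigma)$ the pair $\{\gamma,\tilde\gamma\}$ from \Cref{T213}: the inclusion $\tilde\gamma\in\mathcal{H}(\Sigma)$ follows from the weak div/curl formulations; the pointwise identity $|\tilde\gamma|=|\gamma|$ gives $\|\tilde\gamma\|_{L^2}=1$; and pointwise orthogonality gives $\langle\gamma,\tilde\gamma\rangle_{L^2}=0$. The decisive structural observation is that $\int_{\sigma_p}\gamma=\int_{\sigma_p}\Gamma=0$, because $\operatorname{curl}\Gamma=0$ on $\overline\Omega$ and $\sigma_p$ is contractible there; hence $\gamma=A\gamma_t$ with $A=\int_{\sigma_t}\gamma$, and $\tilde\gamma=A(\gamma_t\times\mathcal{N})$. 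Pointwise orthogonality of $\gamma_t$ and $\gamma_t\times\mathcal{N}$ forces $\overline{Q}(\tilde\gamma)=0$, and substituting into the formula of \Cref{T213} at $v=\tilde\gamma$ yields $\mathcal{H}(\tilde\gamma)=0$. Polarising that same formula at $v=\gamma+\tilde\gamma$ and using $\overline{Q}(\tilde\gamma)=0$ leaves
\[
\mathcal{H}_c(\gamma,\tilde\gamma)=\tfrac{|\Sigma|^2}{2}\,\overline{Q}(\gamma)\,\overline{P}(\tilde\gamma)\,AB,\qquad B:=\int_{\sigma_p}\tilde\gamma.
\]
Expanding $\tilde{\gamma_t}:=\gamma_t\times\mathcal{N}=c_1\gamma_t+c_2\gamma_p$ in the harmonic basis, imposing $\langle\tilde{\gamma_t},\gamma_t\rangle_{L^2}=0$ to eliminate $c_1$ and $\|\tilde{\gamma_t}\|_{L^2}^2=\|\gamma_t\|_{L^2}^2$ to evaluate $c_2^2$, a short algebraic simplification collapses the product above to $(A^2\|\gamma_t\|_{L^2}^2)^2=1$, yielding $|\mathcal{H}_c(\gamma,\tilde\gamma)|=\tfrac12$. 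I expect this algebraic collapse, along with a density argument extending \Cref{T213} to the H\"{o}lder (but not necessarily $C^{0,1}$) elements of $\mathcal{H}(\Sigma)$, to be the main technical point.

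Setting $h_1:=\mathcal{H}(\gamma)$, the helicity quadratic form on $\mathcal{H}(\Sigma)$ is represented in the basis $\{\gamma,\tilde\gamma\}$ by the matrix $\left(\begin{smallmatrix} h_1 & \pm 1/2\\ \pm 1/2 & 0\end{smallmatrix}\right)$, whose eigenvalues are $(h_1\pm\sqrt{h_1^2+1})/2$. Hence $\Lambda(\Sigma)=(|h_1|+\sqrt{h_1^2+1})/2\geq \tfrac12$, and equality occurs iff $\sqrt{h_1^2+1}=1$ together with $|h_1|=0$, i.e. iff $\mathcal{H}(\gamma)=0$.
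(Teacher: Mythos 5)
Your proposal is correct, and its skeleton is the same as the paper's: reduce to the two-dimensional space $\mathcal{H}(\Sigma)$, represent the helicity quadratic form in the orthonormal basis $\{\gamma,\tilde\gamma\}$ by a symmetric $2\times 2$ matrix with entries $\mathcal{H}(\gamma)$, $\mathcal{H}(\tilde\gamma)=0$ and $\mathcal{H}_c(\gamma,\tilde\gamma)=\tfrac12$, and read off the spectrum. Where you genuinely diverge is in how the two key entries are obtained. The paper computes $2\mathcal{H}_c(\gamma,\tilde\gamma)=\|\gamma\|_{L^2(\Sigma)}^2$ and $\mathcal{H}(\tilde\gamma)=0$ directly by potential theory (the jump relation for the layer potential, in Step 1 of the proof of \Cref{T25} and in \Cref{L31}), and it reduces to $\mathcal{H}(\Sigma)$ via \Cref{T223}/\Cref{T224} rather than by hand; it then concludes via $\det\mathcal{M}=\Lambda_+\Lambda_-\cdot(-1)=-\tfrac14$ and the trace, whereas you compute the eigenvalues explicitly — an immaterial difference. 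You instead extract $\mathcal{H}(\tilde\gamma)=0$ and $|\mathcal{H}_c(\gamma,\tilde\gamma)|=\tfrac12$ by polarising the winding formula of \Cref{T213} at $v=\tilde\gamma$ and $v=\gamma+\tilde\gamma$, using $\overline{Q}(\tilde\gamma)=0$ and $\gamma=\bigl(\int_{\sigma_t}\gamma\bigr)\gamma_t$; I checked that this does close up (with $\gamma_p=\mu\gamma+\lambda\tilde\gamma$, $\lambda=1/\int_{\sigma_p}\tilde\gamma$, one gets $\overline{Q}(\gamma)\overline{P}(\tilde\gamma)\int_{\sigma_t}\gamma\int_{\sigma_p}\tilde\gamma=|\Sigma|^{-2}$, so the cross-helicity is exactly $+\tfrac12$, not merely $\pm\tfrac12$). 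This route is logically sound given \Cref{T213} as a black box, but be aware that it is a detour rather than an independent argument: the paper's proof of \Cref{T213} is itself built on precisely the two identities you recover from it, so you are rediscovering the inputs of \Cref{T213} from its output. Two small points to make explicit if you write this up: (i) the symmetry of $\mathcal{H}_c$, which you correctly invoke to get $\mathcal{H}(v)=\mathcal{H}(v_H)$ and a symmetric matrix; and (ii) the extension of \Cref{T213} from $C^{0,1}$ fields to the merely H\"older elements of $\mathcal{H}(\Sigma)$ — this is immediate since, by \Cref{L212}, both sides of the formula are continuous functions of $v$ in $L^2\mathcal{V}_0(\Sigma)$, but it should be said.
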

\begin{rem}
	With some more effort it should be possible to extend the same lower bound to surfaces of arbitrary (strictly positive) genus using similar methods as in the proof of \Cref{T225} by working with a suitable basis of $\mathcal{H}_N(\Omega)$ in order to obtain an appropriate matrix representation of the surface Biot-Savart operator. However, we do not pursue such a possible extension further and instead list this as an open problem below.
\end{rem}
Using the linking interpretation of helicity, \Cref{T26} and \Cref{R27}, we will be able to conclude the following.
\begin{cor}
	\label{C226}
	Let $\Sigma\subset\mathbb{R}^3$ be a closed, connected, $C^{1,1}$-regular, axisymmetric torus. Then $\Lambda(\Sigma)=\frac{1}{2}$. Consequently
	\begin{gather}
		\nonumber
		\Lambda(\Sigma)=\min_{T^2\cong\widetilde{\Sigma}\in C^{1,1}}\Lambda(\widetilde{\Sigma}).
	\end{gather}
\end{cor}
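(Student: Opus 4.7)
The plan is to verify the equality case in \Cref{T225} by showing that for any axisymmetric torus, the harmonic summand $\gamma$ appearing there has vanishing surface helicity. Since \Cref{T225} already gives $\Lambda(\Sigma)\geq\tfrac{1}{2}$ for every closed connected $C^{1,1}$-torus, this single identification will settle both assertions of the corollary.

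First I would use the one-dimensionality of $\mathcal{H}_N(\Omega)$ together with the rotational symmetry to pin down an explicit generator. In cylindrical coordinates $(r,\phi,z)$ aligned with the axis of symmetry, a direct computation shows that $\Gamma:=r^{-1}\hat{\phi}$ is smooth on $\overline{\Omega}$, curl-free, divergence-free, and tangent to $\Sigma$, hence spans $\mathcal{H}_N(\Omega)$. Its restriction $\Gamma|_\Sigma$ is axisymmetric and proportional to $\hat{\phi}$; parametrising $\Sigma$ by a poloidal angle $\theta$ and the toroidal angle $\phi$, a short calculation shows that $\Gamma|_\Sigma$ depends only on $\theta$ and is both $\operatorname{div}_\Sigma$-free and $\operatorname{curl}_\Sigma$-free, so it already belongs to $\mathcal{H}(\Sigma)$. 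Hence no gradient correction appears in the Hodge decomposition and, up to the $L^2(\Sigma)$-normalisation prescribed in \Cref{T225}, $\gamma$ is just $\Gamma|_\Sigma$ itself.

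Next I would compute $\mathcal{H}(\gamma)$ through the linking interpretation of helicity. The integral curves of $\gamma$ are precisely the horizontal circles $\Sigma\cap\{z=\mathrm{const}\}$ carved out by the axisymmetry, so every field line is periodic. Since $r$ is bounded away from $0$ and from above, and $|\gamma|$ is likewise bounded on the compact surface $\Sigma$, the periods are uniformly bounded. The hypotheses of \Cref{R27} are therefore met and I may close each field line by its own flow, so that \Cref{T26} (in the form permitted by \Cref{R27}) expresses $\mathcal{H}(\gamma)$ as an asymptotic average of linking numbers of those closed curves. Two distinct coaxial horizontal circles in $\mathbb{R}^3$ can be continuously pulled apart without intersecting, so they have Gauss linking number zero; moreover the locus $\{(x,y):\gamma_x=\gamma_y\}$ has measure zero in $\Sigma\times\Sigma$. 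The integrand in the limit formula of \Cref{T26} therefore vanishes almost everywhere, and I conclude $\mathcal{H}(\gamma)=0$. The equality clause of \Cref{T225} then delivers $\Lambda(\Sigma)=\tfrac{1}{2}$, and combining with the universal lower bound $\Lambda(\widetilde{\Sigma})\geq\tfrac{1}{2}$ from the same theorem yields that axisymmetric tori are global minimisers.

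The main obstacle I anticipate is the linking step: while the geometric fact that two coaxial horizontal circles are unlinked is transparent, one has to verify carefully that the closing procedure of \Cref{R27} and the almost-everywhere disjointness required by \Cref{T26} really apply to this $\gamma$. Both reduce to the uniform boundedness of periods and to the Lipschitz regularity of $\gamma$, which hold automatically on any axisymmetric $C^{1,1}$-torus by compactness of $\Sigma$ and strict positivity of $r$.
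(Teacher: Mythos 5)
Your proposal is correct and follows essentially the same route as the paper: both reduce to the equality case of \Cref{T225}, identify the generator of $\mathcal{H}_N(\Omega)$ as $\Gamma=\frac{Y}{|Y|^2}$ with $Y$ the rotation field (your $r^{-1}\hat{\phi}$), observe that $\Gamma|_{\Sigma}$ is already harmonic on $\Sigma$ so that $\gamma=\Gamma|_{\Sigma}$, and then conclude $\mathcal{H}(\gamma)=0$ from the linking interpretation of \Cref{T26} and \Cref{R27} because the field lines are mutually unlinked coaxial circles of uniformly bounded period. No gaps.
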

\begin{rem}
	\label{R227}
	We note that in \Cref{T225} and \Cref{C226} we did not specify the area of $\Sigma$. As we shall see, the scaling properties of the functional $\Lambda$ imply that minimising or maximising $\Lambda$ among surfaces with prescribed area is equivalent to optimising $\Lambda$ among surfaces without an area constraint.
\end{rem}
We recall that the corresponding $3$-d problem motivated to try to find a global maximiser of $\Lambda(\Sigma)$ rather than a global minimiser. We pose this as a remaining open problem.
\newline
\newline
\textbf{Open Problems:}
\begin{enumerate}
	\item Does there exist a constant $0<c<\infty$ such that $\Lambda(\Sigma)\leq c$ for all $T^2\cong \Sigma$ which are $C^{1,1}$-regular?
	\item If the answer to (i) is positive, does there exist some $T^2\cong \Sigma$ such that $\Lambda(\Sigma)=\max_{T^2\cong \widetilde{\Sigma}\in C^{1,1}}\Lambda(\widetilde{\Sigma})$?
	\item Do the answers to (i) and (ii) change if we drop the assumption $T^2\cong\Sigma$ and instead allow $\Sigma$ to have arbitrary genus?
	\item Does \Cref{T225} extend to surfaces of arbitrary genus, i.e. is it true that every closed, connected $C^{1,1}$-surface $\Sigma\subset\mathbb{R}^3$ of genus $g\geq 1$ satisfies $\Lambda(\Sigma)\geq \frac{1}{2}$?
\end{enumerate}
\subsection{Simple Surface currents}
\label{SS25}
When building plasma fusion confinement devices one wishes to generate magnetic fields in order to confine the fusion plasma. In particular, in so called stellarator devices, one uses a complex coil structure to generate the fields \cite{Xu16}. One way to model these complex coil structures is the so called coil winding surface (CWS) model, \cite{M87}, where one assumes that the coils are infinitely thin and wind around a given closed surface. The induced current is then modelled as a vector field tangent to the CWS and is assumed to be divergence-free in view of Maxwell's equations. Given a $C^{1,1}$-regular CWS $\Sigma\subset \mathbb{R}^3$ and a divergence-free current $j$ on the CWS, the corresponding magnetic field induced by $j$ within the domain $\Omega$ enclosed by $\Sigma$ is given by the Biot-Savart law
\begin{gather}
	\nonumber
	\operatorname{BS}(j)(x)=\frac{1}{4\pi}\int_{\Sigma}j(y)\times \frac{x-y}{|x-y|^3}d\sigma(y)\text{, }x\in \Omega.
\end{gather}
Now, for a fixed current $j$, there exist in general further currents $j^\prime$ which induce the same magnetic field within $\Omega$. In fact, since $\operatorname{BS}$ is a linear operator, we must have that $j-j^\prime\in \operatorname{Ker}(\operatorname{BS})$. To be more precise, we may view $\operatorname{BS}$ as a bounded operator from $L^2\mathcal{V}_0(\Sigma)$ into $L^2\mathcal{V}(\Omega)$ (the square integrable vector fields on $\Omega$), c.f. \cite[Lemma C.1]{G24}. It has been further shown, see \cite[Theorem 5.1]{G24} that $\dim\left(\operatorname{Ker}(\operatorname{BS})\right)=g(\Sigma)$, where $g(\Sigma)$ denotes the genus of $\Sigma$. In our applications, the CWSs are toroidal, i.e. $T^2\cong \Sigma$, and thus $\dim\left(\operatorname{Ker}(\operatorname{BS})\right)=1$. This gives us some flexibility and hence we may look for currents, and consequently coil configurations, which have a "simple" form. To this end we make the following definition.
\begin{defn}[Simple currents]
	\label{D228}
	Let $T^2\cong \Sigma\subset\mathbb{R}^3$ bound a $C^{1,1}$-solid torus $\Omega\subset\mathbb{R}^3$. Given a purely poloidal curve $\sigma_p$ and a toroidal curve $\sigma_t$ we further let $\gamma_t\in \mathcal{H}(\Sigma)$ denote the induced harmonic field. Given a current $j\in L^2\mathcal{V}_0(\Sigma)$ we define $\overline{Q}(j):=\frac{\int_{\Sigma}j\cdot \gamma_t d\sigma}{|\Sigma|}$ and say that $j$ \textit{simple} if $\overline{Q}(j)=0$. 
\end{defn}
\begin{rem}
	\label{R229}
	\begin{enumerate}
		\item In view of \Cref{L212}, \Cref{D228} coincides with \Cref{D211} whenever $j$ is of class $C^{0,1}$.
		\item We have previously explained in \Cref{R216} that $\gamma_t$ is independent of the choice of $\sigma_p$ and that changing $\sigma_t$ may at most change $\gamma_t$ to $-\gamma_t$ and therefore the definition of a simple current is independent of the chosen curves $\sigma_p$ and $\sigma_t$.
		\item If $j\in C^{0,1}\mathcal{V}_0(\Sigma)$, then according to \Cref{L212} we see that $j$ is simple if and only if $\int_{\Sigma}\hat{q}(x)d\sigma(x)=0$, or in other words if and only if, on average, the weighted asymptotic toroidal windings of the field lines of $j$ are zero. Therefore, in an oversimplified manner of speaking, the field lines of a simple current should not form any "knotted" structures and be expected to tend to be either more poloidal or contractible in nature.
		\item A word of caution: It might turn out that the field lines of $j$ are all toroidal curves and that simply on average the corresponding field lines point as often in positive toroidal direction as they point in negative toroidal direction. As an example consider $\Sigma\subset\mathbb{R}^3$ to be a standard rotationally symmetric torus and let $\phi$ denote the toroidal coordinate and $\theta$ the poloidal coordinate. We may then define $j:=\sin(\theta)e_{\phi}$, where $e_{\phi}$ denotes the standard normalised toroidal field. Then $j\in C^{0,1}\mathcal{V}_0(\Sigma)$ and $\overline{Q}(j)=0$, as follows easily from \Cref{D211}, and hence $j$ is simple. However, $j$ has only a toroidal component, see \Cref{SimpleCurrent}.
	\end{enumerate}
\end{rem}

\begingroup\centering
\begin{figure}[H]
	\hspace{4.5cm}\includegraphics[width=0.4\textwidth, keepaspectratio]{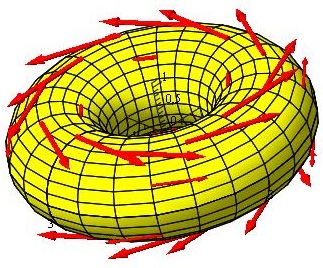}
	\caption{An example of a simple current with only toroidal field lines which point in opposite directions on the upper and lower "hemisphere" of the torus.}
	\label{SimpleCurrent}
\end{figure}
\endgroup

Here is our main result regarding the existence of simple current configurations.
\begin{thm}
	\label{T230}
	Let $T^2\cong \Sigma\subset\mathbb{R}^3$ bound a $C^{1,1}$-solid torus $\Omega\subset\mathbb{R}^3$. Then for every $j\in L^2\mathcal{V}_0(\Sigma)$ there exists a unique $j^\prime\in L^2\mathcal{V}_0(\Sigma)$ such that $\operatorname{BS}(j^\prime)=\operatorname{BS}(j)$ in $\Omega$ and $j^\prime$ is simple. Further, if $j\in L^p\mathcal{V}_0(\Sigma)$ for some $2\leq p\leq \infty$ or $j\in C^{0,\alpha}\mathcal{V}_0(\Sigma)$ for some $0\leq \alpha<1$, then so is $j^\prime$, where $C^{0,0}\equiv C^0$ denotes the continuous currents.
\end{thm}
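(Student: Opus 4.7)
The plan is to exploit the fact that the kernel of $\operatorname{BS}\colon L^2\mathcal{V}_0(\Sigma)\to L^2\mathcal{V}(\Omega)$ is one-dimensional (by \cite[Theorem 5.1]{G24}, as recalled in the introduction) and to show that the linear functional $\overline{Q}$ is non-trivial on this kernel. Once both ingredients are in place, existence and uniqueness of a simple representative $j'$ in the affine subspace $j+\operatorname{Ker}(\operatorname{BS})$ reduce to elementary linear algebra: fixing a non-zero generator $k_0 \in \operatorname{Ker}(\operatorname{BS})$ and setting
\[
j':= j - \frac{\overline{Q}(j)}{\overline{Q}(k_0)}\, k_0,
\]
we immediately obtain $j'\in L^2\mathcal{V}_0(\Sigma)$, $\operatorname{BS}(j')=\operatorname{BS}(j)$ in $\Omega$, and $\overline{Q}(j')=0$. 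Uniqueness follows because any two simple candidates differ by an element $\mu k_0$ of the kernel on which $\overline{Q}$ vanishes, forcing $\mu=0$.

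The main obstacle is therefore the non-vanishing $\overline{Q}(k_0)\neq 0$. I would approach this by producing an explicit generator of $\operatorname{Ker}(\operatorname{BS})$ whose average toroidal winding is manifestly non-zero. A natural candidate comes from an exterior harmonic Neumann field: let $\Gamma^{\mathrm{ext}}$ be a non-trivial $H^1_{\mathrm{loc}}$ vector field on $\mathbb{R}^3\setminus\overline{\Omega}$ which is curl- and divergence-free, tangent to $\Sigma$, decays at infinity, and has non-zero toroidal circulation along $\sigma_t$. The tangential trace of $\Gamma^{\mathrm{ext}}$ on $\Sigma$ should yield a surface current lying in $\operatorname{Ker}(\operatorname{BS})$ via the jump formulas for Biot-Savart (the interior magnetic field vanishes while the exterior one agrees with $\Gamma^{\mathrm{ext}}$), and pairing this trace with $\gamma_t$ reduces, by the defining relations (\ref{E1}), to a non-zero multiple of the toroidal circulation of $\Gamma^{\mathrm{ext}}$. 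Alternatively, arguing by contradiction, if $\overline{Q}(k_0)=0$ then the harmonic part of $k_0$ would be a pure multiple of $\gamma_p$, and such a ``purely poloidal'' surface current must generate a non-trivial toroidal magnetic field inside the solid torus (by an Amp\`ere/flux argument around a toroidal loop in $\Omega$), contradicting $k_0\in \operatorname{Ker}(\operatorname{BS})$.

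For the regularity statement, it suffices to observe that the generator $k_0$ may be taken smooth — for instance as the tangential trace of the harmonic field described above, which is $C^\infty$ on $\Sigma$ by elliptic regularity — hence $k_0\in L^p\mathcal{V}_0(\Sigma)\cap C^{0,\alpha}\mathcal{V}_0(\Sigma)$ for every admissible $p$ and $\alpha$. Since $j'$ differs from $j$ by the fixed scalar multiple $-\tfrac{\overline{Q}(j)}{\overline{Q}(k_0)}\,k_0$, it inherits any improved integrability or H\"older regularity of $j$, which completes the argument.
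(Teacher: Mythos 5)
Your reduction to linear algebra is exactly the paper's: since $\operatorname{Ker}(\operatorname{BS})$ is one-dimensional, everything hinges on showing that $\overline{Q}$ does not vanish on a generator $k_0$, after which the formula $j'=j-\tfrac{\overline{Q}(j)}{\overline{Q}(k_0)}k_0$, the uniqueness argument and the transfer of regularity all go through as you describe. The gap is that neither of your two sketches actually establishes $\overline{Q}(k_0)\neq 0$. In the exterior-field construction, the current supplied by the jump relations is not the tangential trace of $\Gamma^{\mathrm{ext}}$ but its rotation $\mathcal{N}\times\Gamma^{\mathrm{ext}}$ (the trace itself is curl-free on $\Sigma$, not divergence-free), and pairing $\mathcal{N}\times\Gamma^{\mathrm{ext}}$ with $\gamma_t$ produces, via the intersection form on $H^1(\Sigma)$ and the normalisation (\ref{E1}), the \emph{poloidal} period $\int_{\sigma_p}\Gamma^{\mathrm{ext}}$ rather than the toroidal one; indeed $\int_{\sigma_t}\Gamma^{\mathrm{ext}}$ can well be zero (e.g. when $\sigma_t$ bounds a surface in the exterior), so the circulation you propose to use is the wrong invariant. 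The contradiction argument has a different hole: $\overline{Q}(k_0)=0$ only forces the harmonic part of $k_0$ to be $L^2$-orthogonal to $\gamma_t$, hence a multiple $c\,(\gamma\times\mathcal{N})$, not a multiple of $\gamma_p$; your Amp\`ere argument around a core toroidal loop does rule out $c\neq 0$, but it says nothing in the remaining case $c=0$, where $k_0$ would be a non-zero co-exact current, and you would still have to prove separately that no such current lies in $\operatorname{Ker}(\operatorname{BS})$.

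The paper sidesteps both issues by citing the first step of the proof of \cite[Proposition 5.8]{G24}: the generator $j_0$ can be chosen so that $\int_{\Sigma}j_0\cdot\Gamma\,d\sigma=\|\Gamma\|^2_{L^2(\Omega)}\neq 0$ for a fixed interior harmonic Neumann field $\Gamma$; combined with the identity $\gamma_t=\gamma/\int_{\sigma_t}\gamma$ from the proof of \Cref{T213} and the $L^2$-orthogonality of the divergence-free $j_0$ to gradient fields, this yields $\int_{\Sigma}j_0\cdot\gamma_t\,d\sigma\neq 0$ directly. If you want a self-contained argument, repair the first sketch: take $k_0=\mathcal{N}\times\Gamma^{\mathrm{ext}}$ and use that $\int_{\sigma_p}\Gamma^{\mathrm{ext}}\neq 0$ because $[\sigma_p]$ generates $H_1(\mathbb{R}^3\setminus\overline{\Omega})$ and a decaying exterior harmonic Neumann field with vanishing periods would be trivial. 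Your regularity remark is fine in substance, although on a $C^{1,1}$ surface one only obtains $k_0\in\bigcap_{0<\alpha<1}C^{0,\alpha}\mathcal{V}(\Sigma)$ rather than smoothness; this is still enough for the stated conclusion.
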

In the context of plasma fusion confinement devices one of the situations one faces is the following: We suppose we are given a CWS $\Sigma$ and a plasma region $P$ which is assumed to be precompact within the finite domain $\Omega$ bounded by the CWS and is thought of to contain the fusion plasma. Further, we assume we are given a square integrable, harmonic, target magnetic field $B_T\in L^2\mathcal{H}(P):=\{X\in L^2(P,\mathbb{R}^3)\mid \operatorname{curl}(X)=0=\operatorname{div}(X)\}$ which we wish to generate by a surface current in order to confine the fusion plasma. It is known, see \cite[Corollary 3.10 (ii b)]{G24}, that under reasonable assumptions, the image of $\operatorname{BS}$, with domain $L^2\mathcal{V}_0(\Sigma)$, is contained in and $L^2(P)$-dense in the space $L^2\mathcal{H}(P)$. We therefore obtain the following important corollary from \Cref{T230}, where the slightly technical assumptions in the upcoming results are depicted in \Cref{Density}.

\begingroup\centering
\begin{figure}[H]
	\hspace{5.5cm}\includegraphics[width=0.27\textwidth, keepaspectratio]{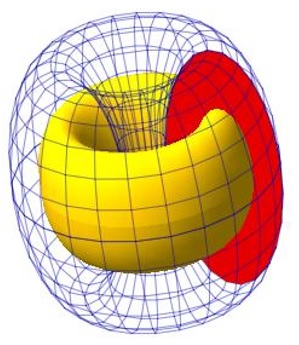}
	\caption{The CWS $\Sigma$ depicted as a blue grid, the plasma domain $P$ depicted in yellow and the disc $D$ depicted in red. Both $D$ and $D\cap P$ are bounded by poloidal loops respectively.}
	\label{Density}
\end{figure}
\endgroup

\begin{cor}[Density of magnetic fields induced by simple currents]
	\label{C231}
	Let $T^2\cong \Sigma\subset\mathbb{R}^3$ bound a $C^{1,1}$-solid torus $\Omega\subset\mathbb{R}^3$. Assume further, that we are given another $C^{1,1}$-solid torus $P$ which is contained in $\Omega$ and of positive distance to $\Sigma$ and that there exists a $C^2$-disc $D\subset \Omega$ such that $\partial D\subset \Sigma$ is a purely poloidal $C^1$-curve in $\Sigma$ and such that $D\cap P$ is also a disc bounded by a purely poloidal $C^1$-curve contained in $\partial P$. Then for every $B_T\in L^2\mathcal{H}(P)$ and every $\epsilon>0$ there exists a current $j\in L^2\mathcal{V}_0(\Sigma)$ with $\overline{Q}(j)=0$ and $\|\operatorname{BS}(j)-B_T\|_{L^2(P)}\leq \epsilon$.
\end{cor}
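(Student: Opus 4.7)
The plan is to derive the corollary almost immediately by chaining two previously established results: the density statement cited from \cite[Corollary 3.10 (ii b)]{G24} (whose applicability is precisely what the technical hypotheses on $P$ and the disc $D$ are designed to guarantee), together with \Cref{T230}, which allows one to replace any given current by a simple one without changing the induced magnetic field in $\Omega$ (and hence in $P\subset\Omega$).

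Concretely, I would proceed as follows. Fix $B_T\in L^2\mathcal{H}(P)$ and $\epsilon>0$. The first step is to invoke the cited density result: under the geometric hypotheses on $\Sigma$, $P$ and $D$ stated in the corollary (in particular the existence of the poloidal disc $D$ with $D\cap P$ also being a disc bounded by a poloidal loop, which is exactly the setting in which \cite[Corollary 3.10 (ii b)]{G24} applies), the image $\operatorname{BS}\bigl(L^2\mathcal{V}_0(\Sigma)\bigr)$ is contained in $L^2\mathcal{H}(P)$ and dense in it with respect to the $L^2(P)$-norm. Consequently I can pick some (not necessarily simple) current $j_0\in L^2\mathcal{V}_0(\Sigma)$ with
\[
\|\operatorname{BS}(j_0)-B_T\|_{L^2(P)}\leq \epsilon.
\]

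The second step is to simplify $j_0$. Apply \Cref{T230} to $j_0$ in order to obtain a unique simple current $j\in L^2\mathcal{V}_0(\Sigma)$, i.e.\ $\overline{Q}(j)=0$, such that $\operatorname{BS}(j)=\operatorname{BS}(j_0)$ throughout $\Omega$. Since $P\subset \Omega$, the identity in particular holds pointwise a.e.\ on $P$, so
\[
\|\operatorname{BS}(j)-B_T\|_{L^2(P)}=\|\operatorname{BS}(j_0)-B_T\|_{L^2(P)}\leq\epsilon,
\]
which is the claimed inequality. This completes the argument.

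There is no real obstacle: the entire difficulty has already been absorbed into \Cref{T230} and the external density theorem. The only point worth verifying carefully is that the geometric hypotheses in the corollary — the existence of $D$ and of the poloidal slice $D\cap P$ of $P$ — coincide with the standing assumptions under which the density statement in \cite[Corollary 3.10 (ii b)]{G24} is formulated; once this is checked, the corollary reduces to the two-line composition above.
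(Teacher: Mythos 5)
Your proof is correct and follows exactly the same two-step route as the paper: first invoke the density result from \cite[Corollary 3.10]{G24} to get an approximating current, then apply \Cref{T230} to replace it by the unique simple current inducing the same field in $\Omega\supset P$. No differences worth noting.
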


Our final result provides a minimisation procedure which allows one to obtain the desired approximating simple currents as minimisers of a specific "energy functional". The procedure is a modification  of a minimisation procedure which has already been used successfully in the past. We first recall the already established procedure, c.f. \cite{PRS22}. Given a positive parameter $\lambda>0$ one can consider the following minimisation problem
\begin{gather}
	\nonumber
	C(\lambda;B_T):=\inf_{j\in L^2\mathcal{V}_0(\Sigma)}\left(\|\operatorname{BS}(j)-B_T\|^2_{L^2(P)}+\lambda\|j\|^2_{L^2(\Sigma)}\right).
\end{gather}
It follows from standard variational techniques and the convexity of the functional involved that the infimum is in fact a minimum and that there exists a unique current $j_{\lambda}\in L^2\mathcal{V}_0(\Sigma)$ which realises $C(\lambda;B_T)$. Using the density of the image of the Biot-Savart operator within $L^2\mathcal{H}(P)$ it has been shown that $\|\operatorname{BS}(j_{\lambda})-B_T\|_{L^2(P)}\rightarrow 0$ as $\lambda\searrow 0$, see \cite[Corollary 4.3]{G24}. In general, there is no reason to expect that the obtained currents $j_{\lambda}$ satisfy $\overline{Q}(j_{\lambda})=0$. However, one can enforce this condition by observing that 
\begin{gather}
	\nonumber
L^2\mathcal{V}^{\overline{Q}=0}_0(\Sigma):=\{j\in L^2\mathcal{V}_0(\Sigma)\mid \overline{Q}(j)=0\}
\end{gather}
is an $L^2$-closed subspace, and hence a Hilbert space in its own right, and by setting up a corresponding minimisation procedure
\begin{gather}
	\label{E25}
	C_0(\lambda;B_T):=\inf_{j\in L^2\mathcal{V}^{\overline{Q}=0}_{0}(\Sigma)}\left(\|\operatorname{BS}(j)-B_T\|^2_{L^2(P)}+\lambda\|j\|^2_{L^2(\Sigma)}\right).
\end{gather}
Just like in the case of $C(\lambda;B_T)$ it follows from standard arguments that the infimum in $C_0(\lambda;B_T)$ is actually achieved and that the global minimiser is unique. Our final result is the following
\begin{thm}[Approximating sequence of simple currents]
	\label{T232}
	Let $T^2\cong \Sigma\subset\mathbb{R}^3$ bound a $C^{1,1}$-solid torus $\Omega\subset\mathbb{R}^3$. Assume further that we are given another $C^{1,1}$-solid torus $P$ which is contained in $\Omega$ and of positive distance to $\Sigma$ and that there exists a $C^2$-disc $D\subset \Omega$ such that $\partial D\subset \Sigma$ is a purely poloidal $C^1$-curve in $\Sigma$ and such that $D\cap P$ is also a disc bounded by a purely poloidal $C^1$-curve contained in $\partial P$. Given any $B_T\in L^2\mathcal{H}(P)$ we denote by $j^0_{\lambda}$ the (unique) global minimiser realising $C_0(\lambda;B_T)$ in (\ref{E25}) for fixed $\lambda>0$. Then $\|\operatorname{BS}(j^0_{\lambda})-B_T\|_{L^2(P)}\rightarrow 0$ as $\lambda\searrow 0$.
\end{thm}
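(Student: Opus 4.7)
The plan is to mimic the standard Tikhonov-regularisation convergence argument that is used for the unconstrained functional $C(\lambda;B_T)$ in \cite[Corollary 4.3]{G24}, replacing the unconstrained density statement by the simple-current density provided by \Cref{C231}.

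First I would record the routine variational preliminaries that are already announced in the paragraph preceding the theorem statement: the linear map $j\mapsto \overline{Q}(j)=|\Sigma|^{-1}\int_{\Sigma}j\cdot \gamma_t\, d\sigma$ is $L^2(\Sigma)$-continuous, hence $L^2\mathcal{V}^{\overline{Q}=0}_{0}(\Sigma)$ is an $L^2$-closed subspace and, in particular, a Hilbert space on its own. On this space the functional
\[
F_\lambda(j):=\|\operatorname{BS}(j)-B_T\|^2_{L^2(P)}+\lambda\|j\|^2_{L^2(\Sigma)}
\]
is strictly convex, weakly lower semicontinuous, and, because of the $\lambda\|j\|^2_{L^2(\Sigma)}$ term, coercive, so the (unique) global minimiser $j^0_\lambda$ exists.

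The central step is to plug in a competitor produced by \Cref{C231}. Fix an arbitrary $\varepsilon>0$; the geometric hypotheses of \Cref{T232} coincide with those of \Cref{C231}, so there exists a simple current $j_\varepsilon\in L^2\mathcal{V}^{\overline{Q}=0}_{0}(\Sigma)$ with $\|\operatorname{BS}(j_\varepsilon)-B_T\|_{L^2(P)}<\varepsilon$. Testing the minimality of $j^0_\lambda$ against $j_\varepsilon$ gives
\[
\|\operatorname{BS}(j^0_\lambda)-B_T\|^2_{L^2(P)}+\lambda\|j^0_\lambda\|^2_{L^2(\Sigma)}\;\leq\; F_\lambda(j_\varepsilon)\;<\;\varepsilon^2+\lambda\|j_\varepsilon\|^2_{L^2(\Sigma)}.
\]
Dropping the non-negative term $\lambda\|j^0_\lambda\|^2_{L^2(\Sigma)}$ on the left and letting $\lambda\searrow 0$ (with $j_\varepsilon$ held fixed) yields $\limsup_{\lambda\searrow 0}\|\operatorname{BS}(j^0_\lambda)-B_T\|^2_{L^2(P)}\leq \varepsilon^2$, and since $\varepsilon>0$ was arbitrary the theorem follows.

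The entire argument is short, and the only genuine obstacle is that the unconstrained density \cite[Corollary 3.10]{G24} is \emph{not} by itself sufficient, since the approximating sequence it produces need not consist of simple currents. The constrained density provided by \Cref{C231} (which in turn depends on \Cref{T230}) is exactly what makes the competitor $j_\varepsilon$ admissible in the constrained Tikhonov problem. Once this replacement is available, no further analytic work is required beyond the elementary comparison above.
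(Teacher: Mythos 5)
Your proposal is correct and follows essentially the same route as the paper: both arguments test the minimality of $j^0_\lambda$ against a fixed simple competitor $j_\varepsilon$ supplied by \Cref{C231}, drop the nonnegative penalty term, and let $\lambda\searrow 0$ before sending $\varepsilon\to 0$. No gaps.
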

\section{Proofs of main results}
\label{S3}
\subsection{Proof of \Cref{T24}}
Before we come to the proof let us recall how the $(0,2,3)$-helicity is defined. Given a closed $1$-form $\alpha$ on a surface $\Sigma$ we can consider the inclusion $\iota: (\Sigma\times \Sigma)\setminus \{x=y\}\rightarrow \Sigma\times \Sigma$ and the projections $\pi_x:\Sigma\times \Sigma\rightarrow \Sigma,(x,y)\mapsto x$ and $\pi_y:\Sigma\times \Sigma\rightarrow \Sigma,(x,y)\mapsto y$. We define the Gauss map $g:(\Sigma\times \Sigma)\setminus \{x=y\}\rightarrow S^2\text{, }(x,y)\mapsto \frac{x-y}{|x-y|}$ and we let $\omega^2_{S^2}$ denote the standard normalised(!) area form on $S^2$. We can then define $\alpha_x:=(\pi_x)^\#\alpha$, $\alpha_y:=(\pi_y)^\#\alpha$ (here $f^\#$ denotes the pullback via a map $f$). Now, strictly speaking, the space $C_2(\Sigma):=(\Sigma\times \Sigma)\setminus \{x=y\}$ is first compactified by means of the Fulton-MacPherson compactification, c.f. \cite[Definition 2.3]{CP10}, and it is shown that the forms $\alpha_x$, $\alpha_y$ as well as the Gauss map extend smoothly to the compactification \cite[Theorem 2.4 \& Lemma 2.10]{CP10}. Further, the compactification has a manifold structure \cite[Theorem 2.4]{CP10}. Denoting by $C_2[\Sigma]$ the compactification and with a slight abuse of notation we can define the helicity as
\[
\mathcal{H}_{(0,2,3)}(\alpha):=\int_{C_2[\Sigma]}\alpha_x\wedge\alpha_y\wedge g^\#\omega^2_{S^2}.
\]
We note that the space $C_2[\Sigma]$ is $4$-dimensional and so it makes sense to integrate a $4$-form. The same reasoning as in \cite[Theorem 2.15]{CP10} tells us that it is enough to prove that for all $(x,y)\in (\Sigma\times \Sigma)\setminus \{x\neq y\}$ we have the identity
\[
\alpha_x\wedge\alpha_y\wedge g^\#\omega^2_{S^2}=\frac{1}{4\pi}v(x)\cdot \left(v(y)\times \frac{x-y}{|x-y|^3}\right)\omega_\Sigma(x)\wedge \omega_\Sigma(y)
\]
where $\omega_\Sigma$ is the area-form on $\Sigma$ induced by the Euclidean metric and where $v$ is the unique vector field on $\Sigma$ satisfying $\iota_v\omega_\Sigma=\alpha$.
\begin{proof}[Proof of \Cref{T24}]
	We prove more generally that if $\alpha$ and $\beta$ are two $1$-forms on $\Sigma$, then
	\[
	\beta_x\wedge \alpha_y\wedge g^\#\omega^2_{S^2}=\frac{1}{4\pi}w(x)\cdot \left(v(y)\times \frac{x-y}{|x-y|^3}\right)\omega_\Sigma(x)\wedge \omega_\Sigma(y)
	\]
	where $w$ is the vector field associated with $\beta$ and $v$ is the vector field associated with $\alpha$ by means of contracting the area form. We fix any $x,y\in \Sigma$ with $x\neq y$ and let $\mu,\eta$ be charts of $\Sigma$ around $x$ and $y$ respectively. Additionally, we choose $\mu\times \eta$ as a chart around $(x,y)\in (\Sigma\times \Sigma)\setminus \{x=y\}$. Further, we let $p,q$ be any two points contained in the domains of $\mu$ and $\eta$ respectively. We see that, if we denote by $d\mu^i$,$d\eta^j$ the induced covector bases at $p,q\in \Sigma$ respectively and with a slight abuse of notation by $d\mu^i(p,q)$,$d\eta^j(p,q)$ the induced basis at $(p,q)\in (\Sigma\times \Sigma)\setminus \{x=y\}$, then, if we express $\beta(p)=\beta_i(p)d\mu^i(p)$, we find 
	\begin{gather}
		\nonumber
	\beta_x(p,q)=((\pi_x)^\#\beta)(p)=\sum_{i=1}^2\beta_i(p)d\mu^i(p,q).
	\end{gather}
	Similarly, $\alpha_y(p,q)=\sum_{j=1}^2\alpha_j(q)d\eta^j(p,q)$. As for the remaining term $g^\#\omega^2_{S^2}$ we recall that we have the identity
	\begin{gather}
		\nonumber
	\omega^2_{S^2}=\frac{1}{4\pi}\iota^\#\left(x dy\wedge dz-ydx\wedge dz+zdx\wedge dy\right)
	\end{gather}
	where $\iota$ denotes the inclusion $S^2\hookrightarrow\mathbb{R}^3$. Hence, we may view the Gauss map $g$ as a map into $\mathbb{R}^3$ rather than $S^2$ and are left with computing the pullback
	\begin{gather}
		\nonumber
	G^\#\left(x dy\wedge dz-ydx\wedge dz+zdx\wedge dy\right)(p,q)
	\end{gather}
	where $G:(\Sigma\times \Sigma)\setminus \{x=y\}\rightarrow \mathbb{R}^3,(x,y)\mapsto \frac{x-y}{|x-y|}$. In order to find an appropriate expression for the coefficients of this $2$-form we observe that
	\begin{gather}
		\nonumber
		G_*(p,q)\partial_{\mu^i}(p,q)=\frac{\partial_{\mu^i}(p)+\frac{(p-q)\cdot \partial_{\mu^i}(p)}{|p-q|^2}(q-p)}{|p-q|} \\
		\nonumber
		G_*(p,q)\partial_{\eta^j}(p,q)=-\frac{\partial_{\eta^j}(q)+\frac{(p-q)\cdot \partial_{\eta^j}(q)}{|p-q|^2}(q-p)}{|p-q|}
	\end{gather}
	where the vectors tangent to $\Sigma$ are viewed as vectors in $\mathbb{R}^3$. Defining $\omega(x,y,z):=xdy\wedge dz-ydx\wedge dz+z dx\wedge dy$, we observe that by definition of the pullback we have
	\begin{gather}
		\nonumber
	(G^\#\omega)(p,q)\left(\partial_{\mu^1}(p,q),\partial_{\mu^2}(p,q)\right)=\omega\left(\frac{p-q}{|p-q|}\right)\left(G_*(p,q)\partial_{\mu^1}(p,q),G_*(p,q)\partial_{\mu^2}(p,q)\right).
	\end{gather}
	Letting $\hat{r}:=\frac{p-q}{|p-q|}$, $\nu:=\partial_{\mu_1}(p)$ and $\xi:=\partial_{\mu_2}(p)$ we note that we need therefore to compute
	\begin{gather}
		\nonumber
	\omega(\hat{r})\left(\nu-(\hat{r}\cdot \nu)\hat{r},\xi-(\hat{r}\cdot \xi)\hat{r}\right)
	\end{gather}
	and an explicit computation yields
	\begin{gather}
		\nonumber
	\omega(\hat{r})\left(\nu-(\hat{r}\cdot \nu)\hat{r},\xi-(\hat{r}\cdot \xi)\hat{r}\right)=\left(\nu\times \xi\right)\cdot \hat{r}.
	\end{gather}
	Using this formula we can compute all the coefficient functions and eventually arrive at
	\begin{gather}
		\nonumber
		\left(G^\#\omega\right)(p,q)
		\\
		\nonumber
		=\frac{p-q}{4\pi|p-q|^3}\cdot\left(\partial_{\mu^1}(p)\times \partial_{\mu^2}(p)d\mu^1(p,q)\wedge d\mu^2(p,q)+\partial_{\eta^1}(q)\times \partial_{\eta^2}(q)d\eta^1(p,q)\wedge d\eta^2(p,q)\right.
		\\
		\nonumber
		\left. -\sum_{i,j=1}^2\partial_{\mu^i}(p)\times \partial_{\eta^j}(q)d\mu^i(p,q)\wedge d\eta^j(p,q)\right)
	\end{gather}
	where we again view the tangent vectors at $\Sigma$ as vectors in $\mathbb{R}^3$. We observe that the $2$-form $\beta_x\wedge \alpha_y$ only contains basis vectors of the form $d\mu^i(p,q)\wedge d\eta^j(p,q)$ so that all the $d\mu^1\wedge d\mu^2$ and $d\eta^1\wedge d\eta^2$ terms from $G^\#\omega$ will vanish in the product $\beta_x\wedge \alpha_y\wedge G^\#\omega$ because at least one of the covectors $d\mu^i$ or $d\eta^j$ in these expressions must repeat, so that only the mixed terms are of relevance. We obtain
	\begin{gather}
		\nonumber
		\beta_x(p,q)\wedge \alpha_y(p,q)\wedge g^\#\omega^2_{S^2}(p,q)
		\\
		\nonumber
		=\frac{q-p}{4\pi|p-q|^3}\cdot\sum_{i,j,k,m=1}^2\beta_k(p)\alpha_m(p)(\partial_{\mu^i}(p)\times \partial_{\eta^j}(q))d\mu^k(p,q)\wedge d\eta^m(p,q)\wedge d\mu^i(p,q)\wedge d\eta^j(p,q)
		\\
		\nonumber
		=\frac{q-p}{4\pi|p-q|^3}\cdot\left((\beta_2(p)\partial_{\mu_1}(p)-\beta_1(p)\partial_{\mu_2}(p))\times (\alpha_1(q)\partial_{\eta^2}(q)-\alpha_2(q)\partial_{\eta^1}(q))\right)d\mu^1\wedge d\mu^2\wedge d\eta^1\wedge d\eta^2.
	\end{gather}
	We recall that we started with two points $x,y\in \Sigma$ with $x\neq y$ and $\mu$ and $\eta$ were any arbitrary charts centred around these two points. In particular, we may pick our charts such that $g_{ij}(x)=\delta_{ij}=\tilde{g}_{ij}(y)$ at these two points where $g_{ij}$ and $\tilde{g}_{ij}$ denote the metric tensor coefficients in the respective coordinate charts. Further, with the right choice of orientation on $\Sigma$ we may assume that $\partial_{\mu^1}(x)\times \partial_{\mu^2}(x)=\mathcal{N}(x)$ where $\mathcal{N}$ denotes the outward pointing unit normal. Using the musical isomorphism we can then identify the forms $\alpha$ and $\beta$ with vector fields $X$ and $Z$ respectively (note that this identification differs from identifying a $1$-form by contracting the area-form). We can then expand $(x-y)$ in the basis $\partial_{\mu^1}(x),\partial_{\mu^2}(x),\mathcal{N}(x)$ and using the properties of the chart we find
	\begin{gather}
		\nonumber
	(x-y)\times (\beta_2(x)\partial_{\mu^1}(x)-\beta_1(x)\partial_{\mu^2}(x))=[(y-x)\cdot Z(x)]\mathcal{N}(x)+[(x-y)\cdot \mathcal{N}(x)]Z(x).
	\end{gather}
	Applying the cyclic properties of the inner product we find
	\begin{gather}
		\nonumber
		(x-y)\cdot [(\beta_2(x)\partial_{\mu^1(x)}-\beta_1(x)\partial_{\mu^2}(x))\times (\alpha_1(y)\partial_{\eta^2}(y)-\alpha_2(y)\partial_{\eta^1}(y))]
		\\
		\nonumber
		=(\alpha_1(y)\partial_{\eta^2}(y)-\alpha_2(y)\partial_{\eta^1}(y))\cdot \left([(y-x)\cdot Z(x)]\mathcal{N}(x)+[(x-y)\cdot \mathcal{N}(x)]Z(x)\right).
	\end{gather}
	Now we note that if $a\in \mathbb{R}^3$ is an arbitrary vector, then upon expanding it in the (orthonormal) basis $\{\partial_{\eta^1}(y),\partial_{\eta^2}(y),\mathcal{N}(y)\}$, we obtain
	\begin{gather}
		\nonumber
	\mathcal{N}(y)\cdot [X(y)\times a]=\alpha_1(y)a^2-\alpha_2(y)a^1=(\alpha_1(y)\partial_{\eta^2}(y)-\alpha_2(y)\partial_{\eta^1}(y))\cdot a.
	\end{gather}
	We hence arrive at
	\begin{gather}
		\nonumber
		\beta_x(x,y)\wedge \alpha_y(x,y)\wedge g^\#\omega^2_{S^2}(x,y)
		\\
		\nonumber
		=\frac{\mathcal{N}(y)}{4\pi|x-y|^3}\cdot\left[[((y-x)\cdot Z(x))\mathcal{N}(x)+((x-y)\cdot \mathcal{N}(x))Z(x)]\times X(y)\right]\omega_\Sigma(x)\wedge \omega_\Sigma(y)
		\\
		\nonumber
		=\frac{X(y)\times \mathcal{N}(y)}{4\pi|x-y|^3}\cdot [((y-x)\cdot Z(x))\mathcal{N}(x)+((x-y)\cdot \mathcal{N}(x))Z(x)]\omega_\Sigma(x)\wedge \omega_\Sigma(y)
		\\
		\nonumber
		=\frac{X(y)\times \mathcal{N}(y)}{4\pi|x-y|^3}\cdot [(Z(x)\times \mathcal{N}(x))\times (y-x)]\omega_\Sigma(x)\wedge \omega_\Sigma(y)
	\end{gather}
	where we used the vector-triple product rule in the last step. Lastly, we observe that the vector field associated with a $1$-form $\alpha$ by contracting the area-form can be obtained from the vector field associated with the $1$-form $\alpha$ via the musical isomorphism by taking the cross product with the outward pointing unit normal. So if we let $v,w$ be the vector fields associated with the $1$-forms $\alpha$ and $\beta$ by contracting the area-form respectively, we arrive at
	\begin{gather}
		\nonumber
	\beta_x(x,y)\wedge \alpha_y(x,y)\wedge (g^\#\omega^2_{S^2})(x,y)=v(y)\cdot \left(w(x)\times \frac{y-x}{4\pi|x-y|^3}\right)\omega_\Sigma(x)\wedge \omega_\Sigma(y).
	\end{gather}
	Consequently
	\begin{gather}
		\nonumber
	\int_{C_2[\Sigma]}\beta_x\wedge \alpha_y\wedge g^\#\omega^2_{S^2}=\int_\Sigma v(y)\cdot \operatorname{BS}_\Sigma(w)(y)d\sigma(y)
	\end{gather}
	and hence $\mathcal{H}_{(0,2,3)}(\alpha)=\mathcal{H}(v)$.
\end{proof}
\subsection{Proof of \Cref{T25}}
We recall that we want to prove that for a given closed, connected surface $\Sigma\subset\mathbb{R}^3$ there exists some $v\in L^2\mathcal{V}_0(\Sigma)$ with $\mathcal{H}(v)\neq 0$ if and only if $g(\Sigma)\geq 1$, where $g(\Sigma)$ denotes the genus of $\Sigma$. In addition, we also need to show that $\mathcal{H}_c(v,w)=0$ for all $w\in L^2\mathcal{V}_0(\Sigma)$ whenever $v$ is a co-exact vector field, where $\mathcal{H}_c$ denotes the cross-helicity and $\mathcal{H}$ denotes the helicity of vector fields.
\begin{proof}[Proof of \Cref{T25}]
	$\quad$
	\newline
	\newline
	\underline{Step 1:} We first prove that if $g(\Sigma)\geq 1$, then there exists some element $v\in L^2\mathcal{V}_0(\Sigma)$ with $\mathcal{H}(v)\neq 0$.
	\newline
	\newline
	We first observe that each closed surface $\Sigma$ bounds a bounded domain $\Omega\subset\mathbb{R}^3$ with $\partial\Omega=\Sigma$. We then have the well-known relation $\dim\left(\mathcal{H}_N(\Omega)\right)=g(\Sigma)$, where $\mathcal{H}_N(\Omega)$ denotes the space of curl-free, div-free, $H^1$-vector fields on $\Omega$ which are tangent to the boundary $\partial\Omega$ and $g(\Sigma)$ denotes the genus of $\Sigma$, \cite[Hodge Decomposition Theorem]{CDG02}. In our situation $g(\Sigma)\geq 1$ and so we can fix some non-zero $\Gamma\in \mathcal{H}_N(\Omega)$. Since $\Gamma$ is tangent to $\Sigma$ we may view it as a vector field on $\Sigma$. The fact that $\Gamma$ is curl-free on $\Omega$ implies that so is its restriction to $\Sigma$ (in the weak sense). We recall that $\mathcal{H}_N(\Omega)\subset \bigcap_{0<\alpha<1}C^{0,\alpha}\mathcal{V}(\overline{\Omega})$ and hence we can write according to the Hodge decomposition theorem $\Gamma|_{\Sigma}=\operatorname{grad}_\Sigma(f)+\gamma$ for suitable $f\in \bigcap_{0<\alpha<1}C^{1,\alpha}(\Sigma)$ and $\gamma\in \mathcal{H}(\Sigma)$ where $\mathcal{H}(\Sigma)$ denotes the space of curl-free and div-free fields on $\Sigma$ of class $\bigcap_{1\leq p<\infty}W^{1,p}$. Since $\Gamma$ is non-zero so must be $\gamma$ (since if the restriction of a harmonic Neumann field to the boundary is a gradient field, then the harmonic Neumann field is already identically zero). We can now let $\mathcal{N}$ denote the outward pointing unit normal and define $\tilde{\gamma}:=\gamma\times \mathcal{N}$ (so that in turn $\gamma=\mathcal{N}\times \tilde{\gamma}$). This new field $\tilde{\gamma}$ is still divergence-free and curl-free (in the language of differential forms the $1$-form associated with $\tilde{\gamma}$ by the musical isomorphism can be obtained from that of $\gamma$ by applying the Hodge-star operator). If either $\mathcal{H}(\gamma)\neq 0$ or $\mathcal{H}(\tilde{\gamma})\neq 0$, then the theorem is proven (since both vector fields are div-free). If on the other hand $\mathcal{H}(\gamma)=0=\mathcal{H}(\tilde{\gamma})$, then we can consider the vector field $v:=\tilde{\gamma}+\gamma$ and notice that, due to the symmetry property of the Biot-Savart operator, we have
	\begin{gather}
		\nonumber
	\mathcal{H}(v)=\mathcal{H}(\gamma)+\mathcal{H}(\tilde{\gamma})+2\mathcal{H}_c(\tilde{\gamma},\gamma)=2\mathcal{H}_c(\tilde{\gamma},\gamma)
	\end{gather}
	where we recall that $\mathcal{H}_c(\tilde{\gamma},\gamma)=\left(\tilde{\gamma},\operatorname{BS}_{\Sigma}(\gamma)\right)_{L^2(\Sigma)}$ is the "cross-helicity". We note that since  $\tilde{\gamma}=\gamma\times \mathcal{N}$ we get by means of the cyclic properties of the Euclidean product
	\begin{gather}
		\nonumber
		4\pi\mathcal{H}_c(\tilde{\gamma},\gamma)=\int_{\Sigma}\int_{\Sigma}(\tilde{\gamma}(x)\times \gamma(y))\cdot \frac{x-y}{|x-y|^3}d\sigma(x)d\sigma(y)
		\\
		\nonumber
		=\int_{\Sigma}\int_{\Sigma}((\gamma(x)\times \mathcal{N}(x))\times \gamma(y))\cdot \frac{x-y}{|x-y|^3}d\sigma(x)d\sigma(y)
		\\
		\nonumber
		=\int_{\Sigma}\int_{\Sigma} [(\gamma(x)\cdot \gamma(y))\mathcal{N}(x)-(\gamma(y)\cdot \mathcal{N}(x))\gamma(x)]\cdot \frac{x-y}{|x-y|^3}d\sigma(y)d\sigma(x)
		\\
		\nonumber
		=\int_{\Sigma}\int_{\Sigma}\gamma(y)\cdot\left[\left(\mathcal{N}(x)\cdot \frac{x-y}{|x-y|^3}\right)\gamma(x)-\left(\gamma(x)\cdot\frac{x-y}{|x-y|^3} \right)\mathcal{N}(x)\right]d\sigma(y)d\sigma(x).
	\end{gather}
	We first claim that
	\begin{gather}
		\nonumber
	\mathbb{R}^3\rightarrow\mathbb{R}\text{, }y\mapsto \int_{\Sigma}\left(\gamma(x)\cdot \frac{x-y}{|x-y|^3}\right)(\mathcal{N}(x)-\tilde{\mathcal{N}}(y))d\sigma(x)
	\end{gather}
	is a continuous function, where $\tilde{\mathcal{N}}$ is any $C^{0,1}$-regular (compactly supported) extension of $\mathcal{N}$ to $\mathbb{R}^3$. But this follows easily from the generalised dominated convergence theorem because we have the estimate
	\begin{gather}
		\nonumber
	\left|\left(\gamma(x)\cdot \frac{x-y}{|x-y|^3}\right)(\mathcal{N}(x)-\tilde{\mathcal{N}}(y))\right|\leq \frac{c}{|x-y|}
	\end{gather}
	for a suitable constant $c>0$ which is independent of $y$ and from the fact that the function $\mathbb{R}^3\rightarrow\mathbb{R}$, $y\mapsto\int_{\Sigma}\frac{1}{|x-y|}d\sigma(x)$ is continuous.
	
	Therefore, if for some fixed $y\in \Sigma$ we let $(y_n)_n\subset \Omega$ be any sequence converging to $y$, we find
	\begin{gather}
		\nonumber
		\gamma(y)\cdot\int_{\Sigma}\left(\gamma(x)\cdot\frac{x-y}{|x-y|^3}\right)\mathcal{N}(x)d\sigma(x)
		\\
		\nonumber
		=\gamma(y)\cdot\int_{\Sigma}\left(\gamma(x)\cdot\frac{x-y}{|x-y|^3}\right)(\mathcal{N}(x)-\mathcal{N}(y))d\sigma(x)
		\\
		\label{E24}
		=\lim_{n\rightarrow\infty}\gamma(y)\cdot\int_{\Sigma}\left(\gamma(x)\cdot\frac{x-y_n}{|x-y_n|^3}\right)(\mathcal{N}(x)-\tilde{\mathcal{N}}(y_n))d\sigma(x)
	\end{gather}
	where we used that $\gamma(y)\cdot \mathcal{N}(y)=0$ because $\gamma$ is a tangent field of $\Sigma$. We observe that since $y_n\notin \Sigma$ and since $\tilde{\mathcal{N}}(y_n)$ is independent of $x$, the term involving $\tilde{\mathcal{N}}(y_n)$ vanishes because
	\begin{gather}
		\nonumber
	\int_{\Sigma}\gamma(x)\cdot \frac{x-y_n}{|x-y_n|^3}d\sigma(x)=-\int_{\Sigma}\gamma(x)\cdot \nabla^{\Sigma}_x\left(\frac{1}{|x-y_n|}\right)d\sigma(x)=0\text{ for all }n
	\end{gather}
	where we used that $\gamma$ is divergence-free and that $x\mapsto \frac{1}{|x-y_n|}$ defines a $C^1$-function on $\Sigma$ for all fixed $n$ because each $y_n$ has a positive distance to $\Sigma$. We conclude
	\begin{gather}
		\nonumber
	\gamma(y)\cdot\int_{\Sigma}\left(\gamma(x)\cdot\frac{x-y}{|x-y|^3}\right)\mathcal{N}(x)d\sigma(x)=\lim_{n\rightarrow\infty}\gamma(y)\cdot\int_{\Sigma}\left(\gamma(x)\cdot\frac{x-y_n}{|x-y_n|^3}\right)\mathcal{N}(x)d\sigma(x).
	\end{gather}
	On the other hand we have the well-known jump formula, \cite[Theorem 4.30]{RCM21} which tells us that we can find a suitable sequence $(y_n)_n\subset \Omega$ converging to $y$ such that
	\begin{gather}
		\nonumber
	\int_{\Sigma}\left(\mathcal{N}(x)\cdot \frac{x-y}{|x-y|^3}\right)\gamma(x)d\sigma(x)=-\frac{4\pi}{2}\gamma(y)+\lim_{n\rightarrow\infty}\int_{\Sigma}\left(\mathcal{N}(x)\cdot \frac{x-y_n}{|x-y_n|^3}\right)\gamma(x)d\sigma(x).
	\end{gather}
	Combining our considerations so far, we see that for all $y\in \Sigma$ there exists some sequence $(y_n)_n\subset \Omega$ converging to $y$ such that
	\begin{gather}
		\nonumber
		4\pi\mathcal{H}_c(\gamma,\tilde{\gamma})=-\frac{4\pi}{2}\|\gamma\|^2_{L^2(\Sigma)}
		\\
		\nonumber
		+\int_{\Sigma}\lim_{n\rightarrow\infty}\gamma(y)\cdot \left(\int_{\Sigma}\left(\mathcal{N}(x)\cdot \frac{x-y_n}{|x-y_n|^3}\right)\gamma(x)d\sigma(x)-\int_{\Sigma}\left(\gamma(x)\cdot\frac{x-y_n}{|x-y_n|^3}\right)\mathcal{N}(x)d\sigma(x)\right)d\sigma(y).
	\end{gather}
	We observe now that $\gamma$ admits a curl- and div-free extension to $\Omega$, i.e. there exists some $\tilde{v}\in \bigcap_{1\leq p<\infty}W^{1,p}\mathcal{V}(\Omega)$ with $\operatorname{curl}(\tilde{v})=0$, $\operatorname{div}(\tilde{v})=0$ in $\Omega$ and $\tilde{v}^\parallel=\gamma$, where $\tilde{v}^\parallel:=\tilde{v}-(\mathcal{N}\cdot \tilde{v})\mathcal{N}$. Indeed, we recall that $\Gamma|_{\Sigma}=\operatorname{grad}_{\Sigma}(f)+\gamma$ for a suitable function $f\in \bigcap_{0<\alpha<1}C^{1,\alpha}(\Sigma)$, and thus if we let $\tilde{f}$ denote the harmonic extension of $f$, c.f. \cite[Theorem 2.4.2.5]{Gris85}, then $\Gamma-\operatorname{grad}(\tilde{f})\in \bigcap_{1\leq p<\infty}W^{1,p}\mathcal{V}(\Omega)$ is the desired curl- and div-free extension. One can then use Gauss' theorem and the fact that $y_n\in \Omega$ for fixed $n$, to obtain
	\begin{gather}
		\nonumber
		\int_{\Sigma}\left(\mathcal{N}(x)\cdot \frac{x-y_n}{|x-y_n|^3}\right)\gamma(x)d\sigma(x)-\int_{\Sigma}\left(\gamma(x)\cdot\frac{x-y_n}{|x-y_n|^3}\right)\mathcal{N}(x)d\sigma(x)
		\\
		\nonumber
		=\int_{\Sigma}\left(\mathcal{N}(x)\cdot \frac{x-y_n}{|x-y_n|^3}\right)\tilde{v}(x)d\sigma(x)-\int_{\Sigma}\left(\tilde{v}(x)\cdot\frac{x-y_n}{|x-y_n|^3}\right)\mathcal{N}(x)d\sigma(x)
		\\
		\nonumber
		=4\pi \tilde{v}(y_n)+\int_{\Omega}\operatorname{div}(\tilde{v})(x)\frac{x-y_n}{|x-y_n|^3}d^3x+\int_{\Omega}\frac{y_n-x}{|y_n-x|^3}\times \operatorname{curl}(\tilde{v})(x)d^3x=4\pi\tilde{v}(y_n),
	\end{gather}
	where the first identity follows from the fact that the terms involving the normal part of $\tilde{v}$ cancel each other. Taking the limit $n\rightarrow\infty$ we conclude
	\begin{gather}
		\nonumber
	4\pi\mathcal{H}_c(\gamma,\tilde{\gamma})=\frac{4\pi}{2}\|\gamma\|^2_{L^2(\Sigma)}.
	\end{gather}
	We recall that $\mathcal{H}(\gamma+\tilde{\gamma})=2\mathcal{H}_c(\gamma,\tilde{\gamma})$ and therefore we arrive at
	\begin{gather}
		\nonumber
	\mathcal{H}(\gamma+\tilde{\gamma})=2\mathcal{H}_c(\gamma,\tilde{\gamma})=\|\gamma\|^2_{L^2(\Sigma)}>0
	\end{gather}
	because $\gamma\not\equiv 0$. Hence, in any case, there must exist some $\hat{\gamma}\in \mathcal{H}(\Sigma)$ satisfying $\mathcal{H}(\hat{\gamma})\neq 0$.
	\newline
	\newline
	\underline{Step 2:} In this step we prove that if $w\in L^2\mathcal{V}_0(\Sigma)$ and $f\in H^1(\Sigma)$, then $\mathcal{H}_c(\nabla^\perp f,w)=0$, where $\nabla^\perp f:=\nabla_{\Sigma}f\times \mathcal{N}$. In particular, as $\mathcal{H}(\nabla^\perp f)=\mathcal{H}_c(\nabla^\perp f,\nabla^\perp f)$ we will get $\mathcal{H}(\nabla^\perp f)=0$. Since on a genus zero surface all divergence-free fields are co-exact this will conclude the proof.
	\newline
	\newline
	First we observe that by a density argument we may without loss of generality assume that $w,\nabla_{\Sigma}f\in \bigcap_{1\leq p<\infty}W^{1,p}\mathcal{V}(\Sigma)$. We follow the arguments of the first step. We simply replace $\gamma(y)$ by $w(y)$ and $\gamma(x)$ by $\nabla_\Sigma f(x)$ in the arguments. The only step where the arguments need to be modified is in (\ref{E24}). More specifically, the argument that $\gamma(y)\cdot \tilde{\mathcal{N}}(y_n) \int_{\Sigma}\left(\gamma(x)\cdot \frac{x-y_n}{|x-y_n|^3}\right)d\sigma(x)$ vanishes no longer applies because in our new situation $\gamma(x)$ has to be replaced by $\nabla_{\Sigma} f(x)$ which is no longer divergence-free. However, we see that
	\begin{gather}
		\nonumber
		\int_{\Sigma}\left(\nabla_{\Sigma}f(x)\cdot \frac{x-y_n}{|x-y_n|^3}\right)d\sigma(x)=-\int_{\Sigma}\nabla_{\Sigma} f(x)\cdot \nabla^{\Sigma}_x\frac{1}{|x-y_n|}d\sigma(x)=\int_{\Sigma}\frac{\Delta_{\Sigma}f(x)}{|x-y_n|}d\sigma(x)
	\end{gather}
	where $\Delta_{\Sigma}:=\operatorname{div}_{\Sigma}\circ \nabla_{\Sigma}$ denotes the Laplace operator on $\Sigma$. We note that the function $\mathbb{R}^3\rightarrow\mathbb{R}$, $y\mapsto\int_{\Sigma}\frac{h(x)}{|x-y|}d\sigma(x)$ is continuous for any function $h\in L^p(\Sigma)$, for some $p>2$, and therefore
	\begin{gather}
		\nonumber
	\lim_{n\rightarrow\infty}w(y)\cdot \tilde{\mathcal{N}}(y_n)\int_{\Sigma}\left(\nabla_{\Sigma} f(x)\cdot \frac{x-y_n}{|x-y_n|^3}\right)d\sigma(x)=w(y)\cdot \mathcal{N}(y)\int_{\Sigma}\frac{\Delta_{\Sigma} f(x)}{|x-y|}d\sigma(x)=0
	\end{gather}
	because $\tilde{\mathcal{N}}(y)=\mathcal{N}(y)$ (since $\tilde{\mathcal{N}}$ is an extension of $\mathcal{N}$) and because $w(y)$ is tangent to $\Sigma$. The remaining arguments apply verbatim which leads us to
	\begin{gather}
		\nonumber
	2\mathcal{H}_c(\nabla^\perp f,w)=\left( \nabla_{\Sigma} f,w\right)_{L^2(\Sigma)}=0
	\end{gather}
	because $w$ is divergence-free.
\end{proof}
\subsection{Proof of \Cref{T26}}
\begin{proof}[Proof of\Cref{T26}]
	We recall that $v\in C^{0,1}\mathcal{V}_0(\Sigma)$ is a div-free Lipschitz vector field on $\Sigma$. Further, we denote by $\gamma_x$ the (unique) integral curve of $v$ starting at $x$ and for given $T>0$ we denote by $\gamma_x[0,T]$ the path along the field line $\gamma_x$ from time $t=0$ until $t=T$.
	\newline
	\newline
	\underline{Step 1:} In the first step we prove that for every $T,S\in (0,\infty)$ the two path $\gamma_x[0,T]$ and $\gamma_y[0,S]$ are disjoint almost surely (and hence by construction the artificially closed path $\sigma^{\tau}_{x,T}$ and $\sigma^{-\tau}_{y,S}$ will be also almost surely disjoint and have a well-defined linking number).
	
	For given $T,S>0$ we consider the set $B_{T,S}:=\{(x,y)\in \Sigma\times \Sigma|\gamma_x[0,T]\cap \gamma_y[0,S]\neq\emptyset\}$. Now we note that $(x,y)\in B_{T,S}\Leftrightarrow\exists z\in\gamma_x[0,T]\cap \gamma_y[0,S]$.	
	
	In conclusion we must have $\gamma_x(\tau)=z=\gamma_y(\lambda)\Leftrightarrow y=\gamma_x(\tau-\lambda)$ for suitable $0\leq \tau\leq T$, $0\leq \lambda\leq S$, where we used the properties of the flow. Hence, for any $T,S>0$ we have
	\begin{gather}
		\nonumber
	B_{T,S}\subset \bigcup_{n\in \mathbb{N}}\{(x,y)\in \Sigma\times \Sigma|y\in \gamma_x(-n,n)\}.
	\end{gather}
	We observe that the right hand side is independent of $T$ and $S$ and that by sigma-subadditivity it is enough to prove that $\{(x,y)\in \Sigma\times \Sigma|y\in \gamma_x(-n,n)\}$ is a null-set in order to show that for almost every $(x,y)\in \Sigma\times \Sigma$ the sets $\gamma_x[0,T]$ and $\gamma_y[0,S]$ do not intersect (in fact this null set is independent of the choice of $T$ and $S$).
	
	Denoting by $\mu$ the Riemannian measure on $\Sigma$ we find, using Fubini's theorem,
	\begin{gather}
		\nonumber
	(\mu\times \mu)\left(\{(x,y)\in \Sigma\times \Sigma|y\in \gamma_x(-n,n)\}\right)=\int_{\Sigma}\int_{\Sigma}\chi_{\{(x,y)\in \Sigma\times \Sigma|y\in \gamma_x(-n,n)\}}d\sigma(y)d\sigma(x).
	\end{gather}
	We observe that
	\begin{gather}
		\nonumber
	\chi_{\{(x,y)\in \Sigma\times \Sigma|y\in \gamma_x(-n,n)\}}(x,y)=\chi_{B_x}(y)
	\end{gather}
	where $B_x:=\{y\in \Sigma|y\in \gamma_x(-n,n)\}$ so that
	\begin{gather}
		\nonumber
	\int_{\Sigma}\chi_{B_x}(y)d\sigma(y)=\mu(B_x)=\mu\left(\gamma_x(-n,n)\right)=0
	\end{gather}
	where we used in the last step that $\gamma_x(-n,n)$ is either a point or otherwise a local embedding of a $1$-manifold and hence has Hausdorff-dimension at most $1$.
	\newline
	\newline
	\underline{Step 2:} In this step we prove the remaining claims of the theorem.
	\newline
	\newline
		In order to specify our choice of $\tau(\Sigma)>0$, we observe first that there exists some $c=c(\Sigma)>0$ such that, c.f. \cite[Lemma 41.12]{Ser17},
		\begin{equation}
			\label{EX3}
			|\mathcal{N}(x)\cdot (x-y)|\leq c|x-y|^2\text{ for all }x,y\in \Sigma.
		\end{equation}
		Further, we note that the function
		\begin{gather}
			\nonumber
		\kappa:\Sigma\times \Sigma\rightarrow \mathbb{R}\text{, }(x,y)\mapsto \mathcal{N}(x)\cdot \mathcal{N}(y)
		\end{gather}
		is uniformly continuous and thus
		\begin{equation}
			\label{EX4}
			\mathcal{N}(x)\cdot \mathcal{N}(y)\geq \frac{1}{2}\text{ for all }x,y\in \Sigma, |x-y|<\delta(\Sigma)
		\end{equation}
		for a suitable $\delta(\Sigma)>0$. We can then choose $\tau(\Sigma)>0$ such that $\tau(\Sigma)\leq \min\{\frac{\delta}{3},\frac{1}{6c}\}$ and such that $\Psi_{t}(x):=x+t\mathcal{N}(x)$ is a diffeomorphism from $\Sigma$ onto its image for every $|t|\leq \tau$.
		
		In order to obtain the desired linking interpretation we observe first that, c.f. \cite[Theorem 2]{TME67}:
		
		For every $f\in L^1(\Sigma\times \Sigma)$, the sequence of functions $\frac{\int_0^{S_n}\int_0^{T_n}f(\gamma_x(t),\gamma_y(s))dtds}{T_nS_n}$ admits a well-defined $L^1$-limit denoted by $\hat{f}\in L^1(\Sigma\times \Sigma)$ and \[
		\int_{\Sigma\times \Sigma}f(x,y)d\sigma(x)d\sigma(y)=\int_{\Sigma\times\Sigma}\hat{f}(x,y)d\sigma(x)d\sigma(y).
		\]
		We define $f(x,y):=\frac{1}{4\pi}(v(x)\times v(y))\cdot \frac{x-y}{|x-y|^3}\in L^1(\Sigma\times \Sigma)$, which can be seen by writing $v(x)\times v(y)=v(x)\times (v(y)-v(x))$, using the Lipschitz property of $v$ and the fact that $\frac{1}{|x-y|}$ is integrable over $\Sigma\times \Sigma$. We therefore arrive at
		\begin{equation}
			\label{EX5}
			\mathcal{H}(v)=\frac{1}{4\pi}\int_{\Sigma\times \Sigma}\lim^{L^1}_{n\rightarrow\infty}\frac{\int_0^{T_n}\int_0^{S_n}(\dot{\gamma}_x(t)\times \dot{\gamma}_y(s))\cdot \frac{\gamma_x(t)-\gamma_y(s)}{|\gamma_x(t)-\gamma_y(s)|^3}dtds}{T_nS_n}d\sigma(x)d\sigma(y).
		\end{equation}
		We recall the linking integral for two piecewise differentiable closed curves $\sigma_1,\sigma_2$ with $\sigma_i(0)=\sigma_i(T_i)$
		\begin{equation}
			\label{EX6}
			\operatorname{lk}(\sigma_1,\sigma_2)=\frac{1}{4\pi}\int_0^{T_1}\int_0^{T_2}(\dot{\sigma}_1(t)\times \dot{\sigma}_2(s))\cdot \frac{\sigma_1(t)-\sigma_2(s)}{|\sigma_1(t)-\sigma_2(s)|^3}dtds.
		\end{equation}
		Now, as discussed before the statement of \Cref{T26}, we close the curves $\gamma_x([0,T_n])$ and $\gamma_y([0,S_n])$ and obtain closed curves $\sigma^{\tau}_{x,T_n}$ and $\sigma^{-\tau}_{y,S_n}$. For notational simplicity we drop the index $n$ in $T_n$ and $S_n$. According to (\ref{EX5}) and (\ref{EX6}) we have to show that the additional terms involved in the expression of $\operatorname{lk}\left(\sigma^{\tau}_{x,T},\sigma^{-\tau}_{y,S}\right)$ converge to zero in the $L^1(\Sigma\times \Sigma)$-sense.
		
		To this end we first let $a\in \Sigma\setminus \gamma_x(\mathbb{R})$, $0\leq s\leq \tau$ be any fixed elements and we observe that
		\begin{gather}
			\nonumber
		|a-s\mathcal{N}(a)-\gamma_x(t)|^2=|a-\gamma_x(t)|^2+s^2-2s(a-\gamma_x(t))\cdot \mathcal{N}(a).
		\end{gather}
		Now, since $a,\gamma_x(t)\in \Sigma$ we can use the estimate $|(a-\gamma_x(t))\cdot \mathcal{N}(a)|\leq c|a-\gamma_x(t)|^2$ so that
		\begin{equation}
			\nonumber
			|a-s\mathcal{N}(a)-\gamma_x(t)|^2\geq |a-\gamma_x(t)|^2+s^2-2cs|a-\gamma_x(t)|^2
			\\
			\nonumber
			\geq |a-\gamma_x(t)|^2+\frac{s^2}{2}-2c^2|a-\gamma_x(t)|^4
		\end{equation}
		where we used the elementary inequality $2db\leq \frac{d^2}{\epsilon}+\epsilon b^2$ for all $d,b\in \mathbb{R}$, $\epsilon>0$ with $d=s$, $b=c|a-\gamma_x(t)|^2$ and $\epsilon=2$. Now we distinguish two cases: Either $2c^2|a-\gamma_x(t)|^2\leq \frac{1}{2}\Leftrightarrow |a-\gamma_x(t)|^2\leq \frac{1}{4c^2}$, in which case we obtain
		\begin{gather}
			\nonumber
		|a-s\mathcal{N}(a)-\gamma_x(t)|^2\geq \frac{|a-\gamma_x(t)|^2+s^2}{2}\geq \frac{|a-\gamma_x(t)|^2+s^2}{4},
		\end{gather}
		otherwise we find $|a-\gamma_x(t)|^2\geq \frac{1}{4c^2}$. But in this case we can use the estimate $|2s(a-\gamma_x(t))\cdot \mathcal{N}(a)|\leq 2s^2+\frac{|a-\gamma_x(t)|^2}{2}$ and therefore in this case
		\begin{gather}
			\nonumber
		|a-s\mathcal{N}(a)-\gamma_x(t)|^2\geq \frac{|a-\gamma_x(t)|^2}{2}-s^2\geq \frac{|a-\gamma_x(t)|^2}{4}+\frac{1}{16c^2}-s^2.
		\end{gather}
		To estimate the last term we recall that we are interested only in the situation $0\leq s\leq \tau$ and thus we can use the fact that $\tau\leq \frac{1}{6c}$, i.e. $\frac{1}{36c^2}\geq \tau^2$ and therefore $\frac{1}{16c^2}\geq 2\tau^2\geq 2s^2$ which yields
		\begin{gather}
			\nonumber
		|a-s\mathcal{N}(a)-\gamma_x(t)|^2\geq\frac{|a-\gamma_x(t)|^2+s^2}{4}
		\end{gather}
		in this case. We conclude that in any case we can estimate a part of the linking integral as follows:
		\begin{gather}
			\nonumber
		\left|\int_0^{T}\int_0^{\tau}\left(v(\gamma_x(t))\times\mathcal{N}(a)\right)\cdot \frac{a-s\mathcal{N}(a)-\gamma_x(t)}{|a-s\mathcal{N}(a)-\gamma_x(t)|^3}dsdt\right|\leq 4c_v\int_0^T\int_0^\tau\frac{1}{|a-\gamma_x(t)|^2+s^2}dsdt
		\end{gather}
		for a suitable constant $c_v>0$ which is given by the (finite) $C^0(\Sigma)$ norm of $v$. We can further explicitly compute and estimate
		\begin{gather}
			\nonumber
		\int_0^\tau\frac{1}{|a-\gamma_x(t)|^2+s^2}ds=\frac{\arctan\left(\frac{\tau}{|a-\gamma_x(t)|}\right)}{|a-\gamma_x(t)|}\leq \frac{\pi}{2|a-\gamma_x(t)|}.
		\end{gather}
		We recall that in our application we want to connect either the point $y$ or $\gamma_y(S)$ with the corresponding points on $\Sigma_{-\tau}$ via a straight line following the normal direction of the starting point. We note that $\gamma_y(S)=\psi_S(y)$ where $\psi$ denotes the flow of $v$ and because $v$ is divergence-free, the corresponding flow is area-preserving. Hence we want to consider the situation where $a=\psi(y)$ for a suitable area preserving diffeomorphism of $\Sigma$. But then we can estimate
		\begin{gather}
			\nonumber
			\int_{\Sigma\times\Sigma}\left|\int_0^T\int_0^\tau\left(v(\gamma_x(t))\times \mathcal{N}(\psi(y))\right)\cdot\frac{\psi(y)-s\mathcal{N}(\psi(y))-\gamma_x(t)}{|\psi(y)-s\mathcal{N}(\psi(y))-\gamma_x(t)|^3}dsdt\right|d\sigma(x)d\sigma(y)
			\\
			\nonumber
			\leq 2\pi c_v\int_{\Sigma\times \Sigma}\int_0^T\frac{1}{|\psi(y)-\gamma_x(t)|}dtd\sigma(x)d\sigma(y)=2\pi c_vT\int_{\Sigma\times \Sigma}\frac{1}{|x-y|}d\sigma(x)d\sigma(y)
		\end{gather}
		where we used Fubini's theorem in the last step and the fact that $\psi$ as well as $\gamma_x(t)=\psi_t(x)$ are area preserving diffeomorphisms for every $t$. Therefore, if we divide the above expression by $T$ and $S$ and take the limit $T,S\rightarrow\infty$ we see that the corresponding part of the linking integral converges to zero in the $L^1$-sense.
		
		Now we connect $\Psi_{-\tau}(y)$ and $\Psi_{-\tau}(\gamma_y(S))$ on the surface $\Sigma_{-\tau}$ by a curve $\gamma$ which is of unit speed and whose length is uniformly bounded independently of $y$ and $S$, for instance we can connect them by a length minimising geodesic on $\Sigma_{-\tau}$ in which case its length is always bounded by the intrinsic diameter of $\Sigma_{-\tau}$. In that case we can use the fact that $\Sigma_{-\tau}$ has a positive distance to $\Sigma$ and therefore $|\gamma_x(t)-\gamma(s)|\geq d(-\tau)>0$ for a suitable $d$ which is independent of $x,t,y,S$ and $s$. Consequently one can use the rough, pointwise, upper bound
		\begin{gather}
			\nonumber
		\left|v(\gamma_x(t))\times \dot{\gamma}(s)\cdot \frac{\gamma_x(t)-\gamma(s)}{|\gamma_x(t)-\gamma(s)|^3}\right|\leq \frac{c_v}{|\gamma_x(t)-\gamma(s)|^2}\leq 
		\frac{c_v}{d^2}
		\end{gather}
		in order to see that the corresponding part of the linking integral converges to zero in the $L^1$-sense.
		The only remaining part of the linking integral which is not covered by the considerations made so far is of the following type
		\begin{gather}
			\nonumber
		\int_0^{\tau}\int_0^\tau\mathcal{N}(b)\times \mathcal{N}(a)\cdot \frac{b+t\mathcal{N}(b)-a+s\mathcal{N}(a)}{|b+t\mathcal{N}(b)-a+s\mathcal{N}(a)|^3}dsdt
		\end{gather}
		where in our application $a\in \{y,\gamma_y(S)\}$ and $b\in \{x,\gamma_x(T)\}$. But here we can argue similar in spirit as in the first case, namely we can consider
		\begin{gather}
			\nonumber
			|a-s\mathcal{N}(a)-(b+t\mathcal{N}(b))|^2=|(b-a)+t\mathcal{N}(b)+s\mathcal{N}(a)|^2
			\\
			\nonumber
			=|b-a|^2+|s\mathcal{N}(a)+t\mathcal{N}(b)|^2+2s\mathcal{N}(a)\cdot (b-a)+2t\mathcal{N}(b)\cdot (b-a).
		\end{gather}
		We note that, since $a,b\in \Sigma$ and $0\leq t\leq \tau$, $|2t\mathcal{N}(b)\cdot (b-a)|\leq 2c\tau |b-a|^2$. We recall that $\tau \leq \frac{1}{6c}$ and thus $|2t\mathcal{N}(b)\cdot (b-a)|\leq \frac{|b-a|^2}{3}$ and so we arrive at
		\begin{gather}
			\nonumber
			|a-s\mathcal{N}(a)-(b+t\mathcal{N}(b))|^2\geq \frac{|b-a|^2}{3}+|s\mathcal{N}(a)+t\mathcal{N}(b)|^2=\frac{|b-a|^2}{3}+t^2+s^2+2st\mathcal{N}(a)\cdot \mathcal{N}(b).
		\end{gather}
		We recall that if $|a-b|\leq \delta(\Sigma)$ then $\mathcal{N}(a)\cdot \mathcal{N}(b)\geq \frac{1}{2}$ and hence, since $s,t\geq 0$ we arrive in this case at
		\begin{gather}
			\nonumber
			|a-s\mathcal{N}(a)-(b+t\mathcal{N}(b))|^2\geq \frac{|a-b|^2}{3}+t^2+s^2\geq \frac{|a-b|^2+t^2+s^2}{6}\geq \frac{|a-b|^2+t^2}{6}.
		\end{gather}
		On the other hand, if $|a-b|\geq \delta$, we can estimate $2st\leq \frac{t^2}{2}+2s^2$ and thus
		\begin{gather}
			\nonumber
			|a-s\mathcal{N}(a)-(b+t\mathcal{N}(b))|^2\geq \frac{|b-a|^2}{3}+\frac{t^2}{2}-s^2.
		\end{gather}
		Further, $\frac{|b-a|^2}{6}\geq \frac{\delta^2}{6}\geq \tau^2\geq s^2$ for all $0\leq s\leq \tau$ because by choice of $\tau$ we have $\tau \leq \frac{\delta}{3}$. Consequently we arrive at
		\begin{gather}
			\nonumber
			|a-s\mathcal{N}(a)-(b+t\mathcal{N}(b))|^2\geq \frac{|b-a|^2+t^2}{6}\text{ for all }a,b\in \Sigma.
		\end{gather}
		From here on out we can argue identically as in the first part of the proof that the corresponding part of the linking integral converges to zero in the $L^1$-sense. Hence, combining our findings with (\ref{EX5}) and the definition of the linking integral (\ref{EX6}) we conclude the validity of the theorem.
\end{proof}
\subsection{Proof of \Cref{T213}}
We start by proving \Cref{L212}.
\begin{proof}[Proof of \Cref{L212}]
	We will prove the statement only for $\overline{Q}(v)$, since an identical reasoning applies to $\overline{P}(v)$. We recall first that for given $v\in C^{0,1}\mathcal{V}_0(\Sigma)$ we denote its integral curves starting at a point $x\in\Sigma$ by $\gamma_x$. Further we have by definition $\hat{q}(x)=\lim_{T\rightarrow\infty}\frac{1}{T}\int_{\gamma_x[0,T]}\gamma_t$ where $\gamma_t\in \mathcal{H}(\Sigma)$ is a harmonic field on $\Sigma$ defined by the relations $\int_{\sigma_p}\gamma_t=0$ and $\int_{\sigma_t}\gamma_t=1$, where $\sigma_p$ and $\sigma_t$ are some fixed purely poloidal and a toroidal closed curves respectively. Finally, if $\hat{q}\in L^1(\Sigma)$, we had set $\overline{Q}(v):=\frac{\int_{\Sigma}\hat{q}(x)d\sigma(x)}{|\Sigma|}$.
	\newline
	\newline
	To see that $\hat{q}\in L^1(\Sigma)$ and $\overline{Q}(v)=\frac{\int_{\Sigma}v(x)\cdot \gamma_t(x)d\sigma(x)}{|\Sigma|}$ we spell out the definition of the line integral, which leads us to
	\begin{gather}
		\nonumber
		\int_{\gamma_x[0,T]}\gamma_t=\int_0^T \dot{\gamma}_x(\tau)\cdot \gamma_t(\gamma_x(\tau))d\tau=\int_0^Tv(\gamma_x(\tau))\cdot \gamma_t(\gamma_x(\tau))d\tau.
	\end{gather}
	Now we define the function $f:\Sigma\rightarrow\mathbb{R}$, $f(x):=v(x)\cdot \gamma_t(x)\in L^1(\Sigma)$ (recall that $\gamma_t\in L^2\mathcal{V}(\Sigma)$). We note that $\gamma_x(\tau)=\psi_{\tau}(x)$ where $\psi_{\tau}$ denotes the area-preserving flow of $v$. Hence, standard ergodic theoretical results, c.f. \cite[Theorem 2]{TME67}, imply that $\hat{q}\in L^1(\Sigma)$ and that $\overline{Q}(v)=\frac{\int_{\Sigma}f(x)d\sigma(x)}{|\Sigma|}=\frac{\int_{\Sigma}v(x)\cdot \gamma_t(x)d\sigma(x)}{|\Sigma|}$ as was to be shown.
\end{proof}
Before we come to the proof of \Cref{T213} we will need one additional lemma. To this end we recall that $\mathcal{H}_N(\Omega)$ denotes the space of curl-free, div-free, vector fields tangent to the boundary of a given domain $\Omega$.
\begin{lem}
	\label{L31}
	Let $\Sigma\subset\mathbb{R}^3$ be a closed, connected $C^{1,1}$-surface and let $\Omega\subset\mathbb{R}^3$ be the bounded domain enclosed by $\Sigma$, $\partial\Omega=\Sigma$. Given any $\Gamma\in \mathcal{H}_N(\Omega)$ we let $\gamma\in \mathcal{H}(\Sigma)$ denote the $L^2(\Sigma)$-orthogonal projection of $\Gamma|_{\Sigma}$ onto the space of harmonic fields $\mathcal{H}(\Sigma)$. Then $\mathcal{H}(\gamma\times \mathcal{N})=0$, where $\mathcal{N}$ denotes the outward pointing unit normal.
\end{lem}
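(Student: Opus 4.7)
The plan is to compute $\mathcal{H}(\tilde{\gamma})$, with $\tilde{\gamma}:=\gamma\times\mathcal{N}$, directly from the integral definition and to exploit the pointwise orthogonality $\tilde{\gamma}\cdot\gamma=0=\tilde{\gamma}\cdot\mathcal{N}$ on $\Sigma$ to force the result to vanish. The first step is to expand the cross-product appearing inside the Biot-Savart kernel: writing $r:=\frac{x-y}{|x-y|^{3}}$ and using the vector identity $(\gamma(y)\times\mathcal{N}(y))\times r=(\gamma(y)\cdot r)\mathcal{N}(y)-(\mathcal{N}(y)\cdot r)\gamma(y)$, Fubini's theorem yields
\[
4\pi\mathcal{H}(\tilde{\gamma})=\int_{\Sigma}\tilde{\gamma}(x)\cdot I(x)\,d\sigma(x),\qquad I(x):=\int_{\Sigma}\left[(\gamma(y)\cdot r)\mathcal{N}(y)-(\mathcal{N}(y)\cdot r)\gamma(y)\right]d\sigma(y).
\]

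The integral $I(x)$ is, up to a relabelling of variables, exactly the combination of single- and double-layer-type integrals that already appeared in Step 1 of the proof of \Cref{T25}. The plan is therefore to recycle that computation: one uses the Hodge decomposition $\Gamma|_{\Sigma}=\nabla_{\Sigma}f+\gamma$, constructs the curl- and divergence-free extension $\tilde{v}:=\Gamma-\nabla\tilde{f}\in\bigcap_{1\leq p<\infty}W^{1,p}\mathcal{V}(\Omega)$ (where $\tilde{f}$ is the harmonic extension of $f$) with tangential trace $\gamma$, and applies Gauss's theorem for an interior sequence $x_{n}\to x$ with $x_{n}\in\Omega$. This gives $I(x_{n})=4\pi\tilde{v}(x_{n})$. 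Passing to the limit $x_{n}\to x\in\Sigma$ and invoking the standard jump relation for the double-layer potential on the $(\mathcal{N}(y)\cdot r)\gamma(y)$ summand (while noting, as in the cited step, that the $(\gamma(y)\cdot r)\mathcal{N}(y)$ summand is continuous up to $\Sigma$ by the divergence-freeness of $\gamma$) identifies the principal-value integral as a linear combination $I(x)=4\pi\tilde{v}(x)+c\,\gamma(x)$ on $\Sigma$, for some universal constant $c$.

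The conclusion follows by pointwise orthogonality. On $\Sigma$ the extension has the form $\tilde{v}=\gamma-(\partial_{\mathcal{N}}\tilde{f})\mathcal{N}$, while $\tilde{\gamma}=\gamma\times\mathcal{N}$ is perpendicular to both $\gamma$ and $\mathcal{N}$ at every point. Hence $\tilde{\gamma}(x)\cdot\gamma(x)=0$ and $\tilde{\gamma}(x)\cdot\tilde{v}(x)=\tilde{\gamma}(x)\cdot\gamma(x)-(\partial_{\mathcal{N}}\tilde{f})(x)\,\tilde{\gamma}(x)\cdot\mathcal{N}(x)=0$ for every $x\in\Sigma$, so that $\tilde{\gamma}(x)\cdot I(x)\equiv 0$ and the integral above vanishes.

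The main obstacle is the careful book-keeping of the jump relation: one has to verify that the principal-value extraction produces only a linear combination of the boundary trace $\tilde{v}|_{\Sigma}$ and of $\gamma$ itself, without leaving behind any residual term whose direction is not orthogonal to $\tilde{\gamma}$. This is precisely what is already achieved in Step 1 of the proof of \Cref{T25}; the present computation is, in effect, the same calculation read with $\gamma$ in the role of the "test'' field, with the pleasant simplification that both candidate non-zero terms are automatically killed by the orthogonality of $\tilde{\gamma}$ with $\gamma$ and with $\mathcal{N}$.
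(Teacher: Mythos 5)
Your proof is correct and follows essentially the same route as the paper: both reduce $\mathcal{H}(\tilde{\gamma})$ to the single/double-layer computation already carried out in Step 1 of the proof of \Cref{T25} (with $\tilde{\gamma}$ now playing the role of the test field), identify the boundary value of the kernel integral as $4\pi\tilde{v}-2\pi\gamma$ via Gauss' theorem and the jump relation, and then kill both terms using the pointwise orthogonality of $\tilde{\gamma}$ to $\gamma$ and to $\mathcal{N}$. The only cosmetic difference is that you expand the vector triple product at the integration variable rather than at the evaluation point, which is equivalent by the antisymmetry of the kernel.
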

\begin{proof}[Proof of \Cref{L31}]
	We follow the proof of \Cref{T25}. Let us set $\tilde{\gamma}:=\gamma\times \mathcal{N}$. Then
	\begin{gather}
		\nonumber
		4\pi\mathcal{H}(\tilde{\gamma})=\int_{\Sigma}\int_{\Sigma}\tilde{\gamma}(x)\cdot \left(\tilde{\gamma}(y)\times \frac{x-y}{|x-y|^3}\right)d\sigma(x)d\sigma(y)
		\\
		\nonumber
		=\int_{\Sigma}\int_{\Sigma}\left(\tilde{\gamma}(x)\times \tilde{\gamma}(y)\right)\cdot\frac{x-y}{|x-y|^3}d\sigma(x)d\sigma(y)=\int_{\Sigma}\int_{\Sigma}\left((\gamma(x)\times \mathcal{N}(x))\times \tilde{\gamma}(y)\right)\cdot\frac{x-y}{|x-y|^3}d\sigma(y)d\sigma(x).
	\end{gather}
	From here on all the arguments in the computations of $4\pi\mathcal{H}_c(\gamma,\tilde{\gamma})$ apply verbatim, with the only caveat that $\gamma(y)$ has to be replaced by $\tilde{\gamma}(y)$ in the appropriate places during the computations. This leads us to the identity $2\mathcal{H}(\tilde{\gamma})=\int_{\Sigma}\tilde{\gamma}(y)\cdot \gamma(y)d\sigma(y)=0$ because $\tilde{\gamma}$ and $\gamma$ are pointwise everywhere orthogonal to each other.
\end{proof}
For potential future reference we also state the following immediate consequence.
\begin{cor}
	\label{C32}
	Let $\Sigma\subset\mathbb{R}^3$ be a closed, connected $C^{1,1}$-surface and let $\Omega\subset\mathbb{R}^3$ be the bounded domain enclosed by $\Sigma$, $\partial\Omega=\Sigma$. Given any $\Gamma\in \mathcal{H}_N(\Omega)$ we have $\Gamma\times\mathcal{N}\in L^2\mathcal{V}_0(\Sigma)$ and $\mathcal{H}(\Gamma\times\mathcal{N})=0$.
\end{cor}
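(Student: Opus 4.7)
The plan is to reduce the corollary to the two results that immediately precede it: \Cref{T25} (for the co-exact part) and \Cref{L31} (for the harmonic part). Write $\Gamma|_\Sigma=\nabla_\Sigma\alpha+\gamma$ via the Hodge decomposition on $\Sigma$, so that
\[
\Gamma\times\mathcal{N}=(\nabla_\Sigma\alpha)\times\mathcal{N}+\gamma\times\mathcal{N},
\]
and attack the two assertions (membership in $L^2\mathcal{V}_0(\Sigma)$, and vanishing of helicity) in that order.

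For the membership in $L^2\mathcal{V}_0(\Sigma)$, the trace theorem applied to $\Gamma\in H^1(\Omega,\mathbb{R}^3)$ yields $\Gamma|_\Sigma\in L^2(\Sigma)$, and pointwise $\Gamma\times\mathcal{N}\perp\mathcal{N}$ so tangency is automatic. The remaining point is divergence-freeness. The key identity, by the cyclic property of the scalar triple product, is
\[
\int_\Sigma(\Gamma\times\mathcal{N})\cdot\nabla_\Sigma\beta\,d\sigma=-\int_\Sigma\Gamma\cdot(\nabla_\Sigma\beta\times\mathcal{N})\,d\sigma\qquad\text{for every }\beta\in C^1(\Sigma),
\]
so $\Gamma\times\mathcal{N}\in L^2\mathcal{V}_0(\Sigma)$ reduces to $\operatorname{curl}_\Sigma(\Gamma|_\Sigma)=0$ in the weak sense, which holds because $\operatorname{curl}(\Gamma)=0$ on $\Omega$ and $\Gamma\parallel\Sigma$ (this is precisely the observation already used in the excerpt to justify the Hodge decomposition of $\Gamma|_\Sigma$). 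The same identity applied separately to $\nabla_\Sigma\alpha$ and $\gamma$ shows that both $(\nabla_\Sigma\alpha)\times\mathcal{N}$ and $\gamma\times\mathcal{N}$ independently lie in $L^2\mathcal{V}_0(\Sigma)$.

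For the helicity, expand using bilinearity and the symmetry of $\mathcal{H}_c$ (the Biot--Savart operator being self-adjoint):
\[
\mathcal{H}(\Gamma\times\mathcal{N})=\mathcal{H}\bigl((\nabla_\Sigma\alpha)\times\mathcal{N}\bigr)+2\mathcal{H}_c\bigl((\nabla_\Sigma\alpha)\times\mathcal{N},\gamma\times\mathcal{N}\bigr)+\mathcal{H}(\gamma\times\mathcal{N}).
\]
The vector field $(\nabla_\Sigma\alpha)\times\mathcal{N}$ is co-exact in the sense of \Cref{T25} (with potential $f=\alpha\in H^1(\Sigma)$), so the second assertion of \Cref{T25} forces $\mathcal{H}_c\bigl((\nabla_\Sigma\alpha)\times\mathcal{N},w\bigr)=0$ for every $w\in L^2\mathcal{V}_0(\Sigma)$; taking $w=(\nabla_\Sigma\alpha)\times\mathcal{N}$ and $w=\gamma\times\mathcal{N}$ kills the first two terms. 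The third term vanishes by \Cref{L31}, which is exactly the statement that $\mathcal{H}(\gamma\times\mathcal{N})=0$ for the harmonic projection $\gamma$. The conclusion $\mathcal{H}(\Gamma\times\mathcal{N})=0$ then follows.

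There is no genuine obstacle beyond careful bookkeeping; the heavy lifting was already done in \Cref{T25} and \Cref{L31}. The only technical point worth double-checking is the admissibility of applying \Cref{T25} with $f=\alpha$, which is valid because the Hodge decomposition invoked in the excerpt produces $\alpha\in H^1(\Sigma)$, matching the hypothesis of \Cref{T25} verbatim.
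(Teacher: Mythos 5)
Your proof is correct and follows essentially the same route as the paper: decompose $\Gamma|_\Sigma=\nabla_\Sigma\alpha+\gamma$ by the Hodge theorem, observe that $\Gamma\times\mathcal{N}$ differs from $\gamma\times\mathcal{N}$ by the co-exact field $(\nabla_\Sigma\alpha)\times\mathcal{N}$, and conclude from \Cref{T25} (co-exact fields have vanishing cross-helicity with every divergence-free field) together with \Cref{L31}. The only difference is presentational: you spell out the weak-divergence computation and the bilinear expansion of $\mathcal{H}$ that the paper compresses into the remark that taking the cross product with $\mathcal{N}$ is the Hodge star applied to the closed $1$-form $\Gamma|_\Sigma$.
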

\begin{proof}[Proof of \Cref{C32}]
	As mentioned in the proof of \Cref{T25} we can decompose $\Gamma|_{\Sigma}$ as $\Gamma|_{\Sigma}=\nabla_{\Sigma}f+\gamma$ for a suitable function $f\in\bigcap_{0<\alpha<1}C^{1,\alpha}(\Sigma)$ and $\gamma\in \mathcal{H}(\Sigma)$. Since taking the cross product with $\mathcal{N}$ corresponds, in the language of differential forms, to applying the Hodge star operator, it is immediate that $\Gamma\times \mathcal{N}\in L^2\mathcal{V}_0(\Sigma)$. In addition, $\Gamma\times \mathcal{N}$ and $\gamma\times \mathcal{N}$ differ only by a co-exact vector field, so that the corollary follows from \Cref{L31} and \Cref{T25}.
\end{proof}
\begin{proof}[Proof of \Cref{T213}]
	Since by assumption $T^2\cong \Sigma$ we know from standard Hodge-theory that $\dim(\mathcal{H}(\Sigma))=2$. Further, if we fix any $\Gamma\in \mathcal{H}_N(\Omega)\setminus\{0\}$ where $\Omega\subset\mathbb{R}^3$ is the solid torus enclosed by $\Sigma$, we see that its $L^2$-orthogonal projection $\gamma$ onto $\mathcal{H}(\Sigma)$ on $\Sigma$ defines a non-zero element of $\mathcal{H}(\Sigma)$. Setting $\tilde{\gamma}:=\gamma\times \mathcal{N}$ we note that $\tilde{\gamma}\in \mathcal{H}(\Sigma)$ because taking the cross product with the outer unit-normal corresponds to applying the Hodge star operator to the corresponding $1$-forms (obtained from the musical isomorphism). Now, if $v\in C^{0,1}\mathcal{V}_0(\Sigma)$ is an arbitrary vector field, we can decompose $v$ by means of the Hodge decomposition theorem as $v=\nabla_{\Sigma}f\times \mathcal{N}+\alpha \gamma+\beta\tilde{\gamma}$ for a suitable $C^1$-function $f\in C^1(\Sigma)$ and suitable constants $\alpha,\beta\in \mathbb{R}$. We observe that $\gamma$ and $\tilde{\gamma}$ have the same $L^2$-norms, so that $\tilde{\gamma}$ is $L^2$-normalised if $\gamma$ is. Assuming that $\gamma$ is $L^2$-normalised and keeping in mind that the Hodge decomposition is $L^2$-orthogonal we find
	\begin{gather}
		\label{ExtraEquation1}
		\alpha=\int_{\Sigma}\gamma\cdot vd\sigma(x)\text{, }\beta=\int_{\Sigma}\tilde{\gamma}(x)\cdot v(x)d\sigma(x).
	\end{gather}
	Utilising \Cref{T25} we conclude
	\begin{gather}
		\nonumber
		\mathcal{H}(v)=\mathcal{H}(\alpha\gamma+\beta\tilde{\gamma})=\alpha^2\mathcal{H}(\gamma)+2\alpha\beta\mathcal{H}_c(\gamma,\tilde{\gamma})+\beta^2\mathcal{H}(\tilde{\gamma}).
	\end{gather}
	According to \Cref{L31} we have $\mathcal{H}(\tilde{\gamma})=0$. In addition, we have shown in the proof of \Cref{T25} that $2\mathcal{H}_c(\gamma,\tilde{\gamma})=\|\gamma\|^2_{L^2(\Sigma)}=1$ due to the normalisation of $\gamma$. We arrive at
	\begin{gather}
		\label{ExtraEquation2}
		\mathcal{H}(v)=\alpha^2\mathcal{H}(\gamma)+\alpha\beta.
	\end{gather}
	Now we recall that we are given a purely poloidal curve $\sigma_p$ and a toroidal curve $\sigma_t$ which uniquely determine elements $\gamma_p,\gamma_t\in \mathcal{H}(\Sigma)$ according to the relations $\int_{\sigma_p}\gamma_t=0=\int_{\sigma_t}\gamma_p$ and $\int_{\sigma_p}\gamma_p=1=\int_{\sigma_t}\gamma_t$. We claim that $\gamma_t=\frac{\gamma}{\int_{\sigma_t}\gamma}$. To see this we use the fact that $\Gamma|_{\Sigma}-\gamma$ is the gradient of a $C^1$-function so that $\int_{\sigma_p}\gamma=\int_{\sigma_p}\Gamma$. Further, we may assume that the $C^1$-curve $\sigma_p$, being purely poloidal, bounds a $C^1$ disc $D\subset \Omega$, $\partial D=\sigma_p$. The idea now is to apply Stokes' theorem to deduce $\int_{\sigma_p}\Gamma=\int_D\operatorname{curl}(\Gamma)\cdot n d\sigma=0$ where $n$ denotes the corresponding normal to $D$ and where we used that $\Gamma$ is curl-free. However, due to the boundary regularity, we only know that $\Gamma\in \bigcap_{1\leq p<\infty}W^{1,p}\mathcal{V}(\Omega)$ so that the classical Stokes' theorem is not immediately applicable. To bypass this problem one can exploit the fact that $\Gamma$ is in fact analytic within $\Omega$, since it is a weak solution of $\Delta \Gamma=0$ in $\Omega$. We can then fix a (compactly supported) $C^{\infty}$-vector field $X$ defined on $\mathbb{R}^3$ which is everywhere inward pointing along $\Sigma$. If we let $\Xi_{\tau}$ denote the flow of $X$ we obtain a $C^{1}$-curve $\sigma_p(\tau):=\Xi_\tau\circ\sigma_p\subset \Omega$ which still bounds a $C^1$-disc within $\Omega$ since $X$ is inward pointing. It then follows from an application of Stokes' theorem and interior regularity of $\Gamma$ that $\int_{\sigma_p(\tau)}\Gamma=0$ for all $\tau>0$. On the other hand, Sobolev embeddings tell us that $\Gamma$ is continuous up to the boundary, so that it is not hard to see that $\int_{\sigma_p}\Gamma=\lim_{\tau\searrow 0}\int_{\sigma_p(\tau)}\Gamma=0$ as previously suggested so that $\int_{\sigma_p}\gamma=0$. Finally, if $\int_{\sigma_t}\gamma=0$ would be true, it would follow, because $\sigma_p$ and $\sigma_t$ generate the first fundamental group, that $\int_{\sigma}\gamma=0$ for every closed loop $\sigma$ so that a standard construction would imply that $\gamma$ is a gradient field implying that $\gamma=0$ since $\gamma$ is div-free and hence $L^2$-orthogonal to the gradient fields. Since $\gamma\neq 0$ we conclude that $\frac{\gamma}{\int_{\sigma_t}\gamma}$ satisfies the defining equations of $\gamma_t$ and so by uniqueness $\gamma_t=\frac{\gamma}{\int_{\sigma_t}\gamma}$. Using this, we obtain from (\ref{ExtraEquation1}) and \Cref{L212}
	\begin{gather}
		\label{ExtraEquation3}
		\alpha=\overline{Q}(v)|\Sigma|\int_{\sigma_t}\gamma.
	\end{gather}
	Now, since $\gamma$ and $\tilde{\gamma}$ form a basis of $\mathcal{H}(\Sigma)$ we can also express $\gamma_p=\mu \gamma+\lambda \tilde{\gamma}$ for suitable $\mu,\lambda\in \mathbb{R}$. We find $1=\int_{\sigma_p}\gamma_p=\lambda\int_{\sigma_p}\tilde{\gamma}$ so that $\lambda=\frac{1}{\int_{\sigma_p}\tilde{\gamma}}$ where $\int_{\sigma_p}\tilde{\gamma}\neq 0$ because otherwise $\gamma$ and $\tilde{\gamma}$ would be linearly dependent. Lastly, $0=\int_{\sigma_t}\gamma_p=\mu\int_{\sigma_t}\gamma+\frac{\int_{\sigma_t}\tilde{\gamma}}{\int_{\sigma_p}\tilde{\gamma}}$. We can solve this for $\mu$ and conclude from (\ref{ExtraEquation1}) and \Cref{L212}
	\begin{gather}
		\label{ExtraEquation4}
		\beta=\int_{\Sigma}\frac{\gamma_p}{\lambda}\cdot v-\frac{\mu}{\lambda}\gamma\cdot vd\sigma(x)=|\Sigma|\left(\int_{\sigma_p}\tilde{\gamma}\right)\left(\overline{P}(v)+\frac{\int_{\sigma_t}\tilde{\gamma}}{\int_{\sigma_p}\tilde{\gamma}}\overline{Q}(v)\right)
	\end{gather}
	where we used that $\gamma_t=\frac{\gamma}{\int_{\sigma_t}\gamma}$. Inserting the expressions (\ref{ExtraEquation3}) and (\ref{ExtraEquation4}) into (\ref{ExtraEquation2}) yields the result.
	
	Finally, if $\sigma_t$ bounds a surface $\mathcal{A}$ outside of $\Omega$, the result follows from \Cref{CP2}.
\end{proof}
\subsection{Proof of \Cref{T218}}
\begin{proof}[Proof of \Cref{T218}]
	We will divide the proof in several steps. We at first consider the divergence-free vector field $w\in C^{0,1}\mathcal{V}_0(\Sigma)$. We recall that by assumption there exists a positive function $f\in C^{0,1}(\Sigma,(0,\infty))$ such that $v=\frac{w}{f}$ is rectifiable. Then in the last step we show that the rotational transform of $v$ coincides with that of $w$ which will conclude the proof.
	\newline
	\newline
	\underline{Step 1:} We observe that $v$ being rectifiable implies that either all of its field lines are closed or none of them are. Assume in the first step that the field lines of $v$ are all closed. Then by the rectifiability property it follows further that all field lines are of the same type, i.e. if $\sigma_x$ is a field line of $v$ starting at $x\in \Sigma$ and we write $\sigma_x=P_x\sigma_p\oplus Q_x\sigma_t$ for suitable $P_x,Q_x\in \mathbb{Z}$ (and where $\sigma_p,\sigma_t$ denote the fixed purely poloidal and toroidal curve respectively), then in fact $P_x$ and $Q_x$ are independent of the chosen point $x$. Since $w$ and $v$ have the same field lines, which are merely traced out with a different speed, we conclude that also all field lines of $w$ are of the type $P\sigma_p\oplus Q\sigma_t$ with the same $P,Q\in \mathbb{Z}$ (which are independent of $x$ and we thus drop the subscript $x$). We recall (\ref{E2}) and see that if for a given $x\in \Sigma$ we let $\tau_x>0$ denote the period of the field line of $w$ starting at $x$, then $\hat{q}(x)=\frac{Q}{\tau_x}$ and with the same reasoning $\hat{p}(x)=\frac{P}{\tau_x}$ where $\hat{q},\hat{p}$ are the quantities associated with $w$. Further, by assumption, $\overline{Q}(w)\neq 0$ which by definition implies $0\neq \int_{\Sigma}\hat{q}(x)d\sigma(x)=Q\int_{\Sigma}\frac{1}{\tau_x}d\sigma(x)$ and consequently $Q\neq 0$ so that $w$ has a well-defined rotational transform at every $x\in \Sigma$ and by definition $\iota_w(x)=\frac{\hat{p}(x)}{\hat{q}(x)}=\frac{P}{Q}$ since $\hat{q}(x)=\frac{Q}{\tau_x}$, $\hat{p}(x)=\frac{P}{\tau_x}$. Finally, since $\hat{p}(x)=\frac{P}{Q}\hat{q}$ and $\frac{P}{Q}$ is a constant, we conclude $\overline{P}(w)=\frac{P}{Q}\overline{Q}(w)$ so that $\iota_w(x)=\frac{\overline{P}(w)}{\overline{Q}(w)}$ is independent of $x$ and the statement follows from \Cref{L212}.
	\newline
	\newline
	\underline{Step 2:} Now we assume that the field lines of $w$ are not closed. We recall that if we let $\mu$ denote the normalised surface measure on $\Sigma$, then an area preserving flow $\psi_t$ is ergodic if for any (Borel-)measurable $A\subset \Sigma$ with $\psi_{-t}(A)\subset A$ for all $t\geq 0$ we have $\mu(A)=0$ or $\mu(A)=1$.
	
	We claim first that the flow $\psi_t$ of $w$ is ergodic. Since $w$ is div-free the flow is clearly area preserving. On the other hand, since $v=\frac{w}{f}$ where $f$ is a strictly positive function, it is easy to see that if $\Psi_t$ denotes the flow of $v$, the condition $\psi_{-t}(A)\subset A$ for all $t\geq 0$ implies $\Psi_{-t}(A)\subset A$ for all $t\geq 0$. We have to prove that this implies that either $A$ or $\Sigma\setminus A$ is a null-set. However, since diffeomorphisms preserve null-sets it is enough, due to the rectifiability of $v$, to see that the linear flow with irrational slope is ergodic on the flat torus which is a well-known fact, see \cite[Chapter II Theorem 3.2]{Mane87}. Then the ergodicity and the $\psi_t$ invariance of $\hat{q}$ and $\hat{p}$ respectively imply that $\hat{q}$ and $\hat{p}$ are both constant a.e.. Then by definition we find $\overline{P}(w)=\hat{p}(x)$, $\overline{Q}(w)=\hat{q}(x)$ for a.e. $x$ and since by assumption $\overline{Q}(w)\neq 0$ we see that $w$ has a well-defined rotational transform given by $\iota_w(x)=\frac{\hat{p}(x)}{\hat{q}(x)}=\frac{\overline{P}(w)}{\overline{Q}(w)}$ so that the statement now follows once more from \Cref{L212}.
	\newline
	\newline
	\underline{Step 3:} Here we prove that $\iota_v(x)=\iota_w(x)$ for a.e. $x\in \Sigma$, which will complete the proof.
	
	We recall the definition $\hat{p}_v(x)=\lim_{T\rightarrow\infty}\frac{1}{T}\int_{\sigma_x^v[0,T]}\gamma_p$, where the subscript indicates that $\hat{p}$ refers to the poloidal twists of $v$ and where $\sigma_x^v$ denotes the field line of $v$ starting at $x$. Further recall that $w=fv$ for a strictly positive function $f\in C^{0,1}(\Sigma,(0,\infty))$. We can then consider 
	\begin{gather}
		\nonumber
	\hat{f}(x):=\lim_{S\rightarrow\infty}\frac{1}{S}\int_0^Sf(\sigma_x^w(t))dt,
\end{gather}
where $\sigma_x^w$ denotes the field line of $w$ starting at $x$. By standard ergodic theory, c.f. \cite[Theorem 2]{TME67}, this defines an element of $L^1(\Sigma)$ since the flow of $w$ is area-preserving. We observe that $f$ is bounded above and away from zero so that $0\leq c_1\leq \hat{f}(x)\leq c_2<\infty$ for suitable constants $0<c_1\leq c_2$ and all $x\in \Sigma$. Further, since $v$ and $w$ have the same field lines we may parametrise $\sigma^v_x(t)$ via $w$ so that we can write $\sigma_x^v[0,T]=\sigma_x^w[0,S]$ for a suitable $S>0$ (in case that $\sigma_x^v$ is periodic we pick $S$ such that $\sigma_x^w[0,S]$ passed $x$ the same amount of times as $\sigma_x[0,T]$ did). We therefore obtain
	\begin{gather}
		\label{ErgodicEquation1}
		\hat{p}_v(x)=\lim_{T\rightarrow\infty}\frac{1}{T}\int_{\sigma_x^v[0,T]}\gamma_p=\lim_{T\rightarrow\infty}\frac{1}{T}\int_{\sigma_x^w[0,S]}\gamma_p
	\end{gather}
	where we note that $S$ depends on $T$. To establish a relation between $T$ and $S$ we observe that $T=\int_{\sigma_x^v[0,T]}\frac{v}{|v|^2}$ and therefore we can write by choice of $S$
	\begin{gather}
		\nonumber
		T=\int_{\sigma_x^w[0,S]}\frac{v}{|v|^2}=\int_0^Sf(\sigma_x^w(t))dt
	\end{gather}
	where we used that $w=fv$. Since $f$ is bounded away from zero and bounded above we see that $T\rightarrow\infty \Leftrightarrow S\rightarrow\infty$. Inserting this relation into (\ref{ErgodicEquation1}) we find
	\begin{gather}
		\nonumber
		\hat{p}_v(x)=\lim_{T\rightarrow\infty}\frac{S}{T}\cdot \frac{1}{S}\int_{\sigma_x^w[0,S]}\gamma_p=\frac{\hat{p}_w(x)}{\hat{f}(x)}.
	\end{gather}
	In the same way one proves that $\hat{q}_v(x)=\frac{\hat{q}_w(x)}{\hat{f}(x)}$ so that in particular $v$ has a well-defined rotational transform if and only if $w$ has a well-defined rotational transform and we obtain $\iota_v(x)=\frac{\hat{p}_v(x)}{\hat{q}_v(x)}=\frac{\hat{p}_w(x)}{\hat{q}_w(x)}=\iota_w(x)$ as claimed.
\end{proof}
\subsection{Proof of \Cref{T223}}
\begin{proof}[Proof of \Cref{T223}]
	\underline{Case (i):} If $\Sigma\cong S^2$ it follows from \Cref{T25} that $\mathcal{H}(w)=0$ for all $w\in L^2\mathcal{V}_0(\Sigma)$ which implies the claim.
	\newline
	\newline
	\underline{Case (ii):} Given any $w\in L^2\mathcal{V}_0(\Sigma)$ we can express $w$ by means of the Hodge decomposition theorem as $w=\operatorname{grad}_{\Sigma}(f)\times\mathcal{N}+\gamma$ for a suitable $f\in H^1(\Sigma)$ and $\gamma\in \mathcal{H}(\Sigma)$. It then follows from \Cref{T25} that if $\mathcal{H}(w)\neq 0$, we must have $\gamma\neq 0$. In addition, if $\mathcal{H}(w)>0$, the $L^2(\Sigma)$-orthogonality of the Hodge-decomposition implies that if $\operatorname{grad}_{\Sigma}(f)\neq 0$ we must have $\frac{\mathcal{H}(w)}{\|w\|^2_{L^2(\Sigma)}}<\frac{\mathcal{H}(w)}{\|\gamma\|^2_{L^2(\Sigma)}}=\frac{\mathcal{H}(\gamma)}{\|\gamma\|^2_{L^2(\Sigma)}}$ and that therefore $w$ could not have been a maximiser. We will now at first show that there exists some $w\in L^2\mathcal{V}_0(\Sigma)$ with $\mathcal{H}(w)>0$ which by the previous argument will allow us to look for a maximiser within the space $\mathcal{H}(\Sigma)$. To see this we let $\Omega$ denote the bounded domain bounded by $\Sigma$ and fix any element $\Gamma\in \mathcal{H}_N(\Omega)\setminus \{0\}$ (the space of curl- and div-free $H^1(\Omega)$ vector fields which are tangent to $\Sigma$). Further, we let $\gamma$ denote the $L^2(\Sigma)$-orthogonal projection of $\Gamma|_{\Sigma}$ onto $\mathcal{H}(\Sigma)$ and we define $\tilde{\gamma}:=\gamma\times \mathcal{N}$. We recall that we have shown in the proof of \Cref{T25} that $2\mathcal{H}_c(\gamma,\tilde{\gamma})=\|\gamma\|^2_{L^2(\Sigma)}$ ($\mathcal{H}_c$ being the cross-helicity) and that \Cref{L31} states that $\mathcal{H}(\tilde{\gamma})=0$. We can now let $\beta\neq 0$ be any constant and set $\alpha:=\frac{1}{|\beta|}$. We define $\hat{\gamma}:=\alpha\gamma+\beta\tilde{\gamma}$ and observe that the previous considerations yield
	\begin{gather}
		\nonumber
		\mathcal{H}(\hat{\gamma})=\alpha^2\mathcal{H}(\gamma)+2\alpha\beta\mathcal{H}_c(\gamma,\tilde{\gamma})+\beta^2\mathcal{H}(\tilde{\gamma})=\alpha^2\mathcal{H}(\gamma)+\alpha\beta\|\gamma\|^2_{L^2(\Sigma)}=\frac{1}{\beta^2}\mathcal{H}(\gamma)+\operatorname{sign}(\beta)\|\gamma\|^2_{L^2(\Sigma)}
	\end{gather}
	where $\operatorname{sign}(\beta)$ equals $+1$ if $\beta>0$ and equals $-1$ if $\beta<0$. Letting $\beta \gg1$ or $\beta \ll -1$ we see that there is some $\hat{\gamma}_{\pm}$ with $\mathcal{H}(\hat{\gamma}_{\pm})\cdot (\pm1)>0$. Then we are left with observing that the functional $\{\hat{\gamma}\in \mathcal{H}(\Sigma)\mid \|\hat{\gamma}\|_{L^2(\Sigma)}=1\}\rightarrow\mathbb{R}$,$ \hat{\gamma}\mapsto \mathcal{H}(\hat{\gamma})$ admits a (necessarily positive) global maximum since the unit sphere in finite dimensional vector spaces is compact. The claim follows then from the scaling properties of the helicity, i.e. $\mathcal{H}(\lambda v)=\lambda^2\mathcal{H}(v)$ for all $\lambda\in \mathbb{R}$ and $v\in L^2\mathcal{V}_0(\Sigma)$.
\end{proof}
\subsection{Proof of \Cref{T224}}
\begin{proof}[Proof of \Cref{T224}]
	We recall that we let $\pi:L^2\mathcal{V}(\Sigma)\rightarrow L^2\mathcal{V}_0(\Sigma)$ denote the $L^2(\Sigma)$-orthogonal projection from the space of square integrable vector fields on $\Sigma$ into the space of square-integrable div-free fields on $\Sigma$. We prove first that $\pi\circ \operatorname{BS}_{\Sigma}:L^2\mathcal{V}_0(\Sigma)\rightarrow L^2\mathcal{V}_0(\Sigma)$ is self-adjoint with respect to $L^2(\Sigma)$. This is easy to see because for every $v,w\in L^2\mathcal{V}_0(\Sigma)$ we have $\langle v,(\pi\circ \operatorname{BS}_{\Sigma})(w)\rangle_{L^2(\Sigma)}=\langle v,\operatorname{BS}_{\Sigma}(w)\rangle_{L^2(\Sigma)}$ and one can then write out the definitions, use the cyclic property of the Euclidean inner product and Fubini's theorem to conclude the self-adjointness.
	
	We will now show that the image of $\pi\circ \operatorname{BS}_{\Sigma}$ is finite dimensional and therefore this operator must be compact. Indeed, it follows from \Cref{T25} that for any $f\in H^1(\Sigma)$ and $w\in L^2\mathcal{V}_0(\Sigma)$, we have $\langle \operatorname{grad}_{\Sigma}(f)\times \mathcal{N},(\pi\circ \operatorname{BS}_{\Sigma}(w))\rangle_{L^2(\Sigma)}=\langle \operatorname{grad}_{\Sigma}(f)\times \mathcal{N},\operatorname{BS}_{\Sigma}(w)\rangle_{L^2(\Sigma)}=\mathcal{H}_c(\operatorname{grad}_{\Sigma}(f)\times \mathcal{N},w)=0$ and so the Hodge decomposition theorem implies that $(\pi\circ \operatorname{BS}_{\Sigma})(w)\in \mathcal{H}(\Sigma)$ for every $w\in L^2\mathcal{V}_0(\Sigma)$ which is a finite dimensional space. It now follows from the spectral theorem for compact, self-adjoint operators that $\pi\circ \operatorname{BS}_{\Sigma}$ admits a discrete spectrum which accumulates at most at zero and a corresponding eigenbasis which together with $\operatorname{Ker}(\pi\circ \operatorname{BS}_{\Sigma})$ spans the space $L^2\mathcal{V}_0(\Sigma)$. According to the proof of \Cref{T223} we see that there exist elements $v,w\in L^2\mathcal{V}_0(\Sigma)$ with $\mathcal{H}(v)>0$ and $\mathcal{H}(w)<0$. Since $\mathcal{H}(v)=\langle v,(\pi\circ \operatorname{BS}_{\Sigma})(v)\rangle_{L^2(\Sigma)}$ and similarly for $w$, it must be the case that $\pi\circ \operatorname{BS}_{\Sigma}$ admits a positive and negative eigenvalue and since the spectrum accumulates at most at zero there must exist a largest positive and smallest negative eigenvalue. It then follows from the eigenspace decomposition that $v\in L^2\mathcal{V}_0(\Sigma)$ with $\|v\|_{L^2(\Sigma)}=1$ maximises helicity among all other such fields if and only if $v$ is an eigenfield of $\pi\circ\operatorname{BS}_{\Sigma}$ corresponding to the largest positive eigenvalue. Due to the scaling properties of helicity the same remains true for the maximisation of the Rayleigh quotient in (\ref{E4}).
	\end{proof}
	\subsection{Proof of \Cref{T225}}
	Before we come to the proof of \Cref{T225} we prove the claim in \Cref{R227}.
	\begin{lem}
		\label{LemmaExtra1}
		Let $T^2\cong\Sigma\subset \mathbb{R}^3$ be a closed, connected $C^{1,1}$-surface. Then
		\begin{gather}
			\nonumber
			\inf_{T^2\cong \Sigma\in C^{1,1}}\Lambda(\Sigma)=\inf_{T^2\cong \Sigma\in C^{1,1},|\Sigma|=1}\Lambda(\Sigma)\text{ and }\sup_{T^2\cong \Sigma\in C^{1,1}}\Lambda(\Sigma)=\sup_{T^2\cong \Sigma\in C^{1,1},|\Sigma|=1}\Lambda(\Sigma).
		\end{gather}
	\end{lem}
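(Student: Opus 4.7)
The plan is to exploit a pure scale invariance of the functional $\Lambda$. Given $\lambda>0$ and a $C^{1,1}$ torus $\Sigma$, let $\phi_\lambda\colon\Sigma\to\lambda\Sigma$, $x\mapsto\lambda x$, and define the induced correspondence $v\mapsto\tilde v$ on vector fields by $\tilde v(\lambda x):=v(x)$. First I would record the elementary scaling facts: $|\lambda\Sigma|=\lambda^2|\Sigma|$, $d\sigma_{\lambda\Sigma}=\lambda^2\phi_{\lambda\ast}d\sigma_\Sigma$, tangentiality is preserved because the tangent space at $\lambda x\in\lambda\Sigma$ is the tangent space at $x\in\Sigma$, and $\nabla_{\lambda\Sigma}\tilde\alpha(\lambda x)=\lambda^{-1}\nabla_\Sigma\alpha(x)$ whenever $\tilde\alpha(\lambda x)=\alpha(x)$. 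Combining these, $v\in L^2\mathcal V_0(\Sigma)$ if and only if $\tilde v\in L^2\mathcal V_0(\lambda\Sigma)$, so $v\mapsto\tilde v$ is a bijection between the admissible classes for the two Rayleigh quotients.

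Next I would verify how the surface Biot–Savart operator, helicity, and $L^2$-norm transform. A direct substitution $y=\phi_\lambda^{-1}(y')$ in the defining integral shows $\operatorname{BS}_{\lambda\Sigma}(\tilde v)(\lambda x)=\operatorname{BS}_\Sigma(v)(x)$, because the kernel $\tfrac{x'-y'}{|x'-y'|^3}$ scales by $\lambda^{-2}$ and is exactly compensated by the $\lambda^2$ from $d\sigma$. Consequently
\begin{gather*}
\mathcal H_{\lambda\Sigma}(\tilde v)=\int_{\lambda\Sigma}\tilde v\cdot\operatorname{BS}_{\lambda\Sigma}(\tilde v)\,d\sigma=\lambda^2\mathcal H_\Sigma(v),\qquad \|\tilde v\|_{L^2(\lambda\Sigma)}^2=\lambda^2\|v\|_{L^2(\Sigma)}^2,
\end{gather*}
so the Rayleigh quotient $|\mathcal H(v)|/\|v\|_{L^2}^2$ is invariant under $v\mapsto\tilde v$. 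Taking suprema over the bijective correspondence of admissible fields yields $\Lambda(\lambda\Sigma)=\Lambda(\Sigma)$ for every $\lambda>0$.

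Finally, given any $T^2\cong\Sigma\in C^{1,1}$, set $\lambda:=|\Sigma|^{-1/2}$ so that $|\lambda\Sigma|=1$; this associates to $\Sigma$ a unit-area $C^{1,1}$ torus $\lambda\Sigma$ with $\Lambda(\lambda\Sigma)=\Lambda(\Sigma)$. Conversely, every unit-area torus is already in the larger class. Hence the two infima (resp.\ suprema) in the statement are taken over sets on which the function $\Lambda$ agrees up to rescaling, and they coincide. I expect no real obstacle here beyond the bookkeeping of the scaling exponents and the verification that tangentiality and the integral definition of divergence-freeness are preserved under the pushforward by $\phi_\lambda$; both follow immediately from the identities above.
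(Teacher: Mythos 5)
Your proposal is correct and follows essentially the same route as the paper: both establish the scale invariance $\Lambda(\lambda\Sigma)=\Lambda(\Sigma)$ via the correspondence $v\mapsto\tilde v$ (the paper writes $w_\lambda(x)=w(x/\lambda)$), verify that the $\lambda^2$ scaling of helicity and of the $L^2$-norm cancel in the Rayleigh quotient, and then rescale to unit area. The only cosmetic difference is that you track the scaling through the Biot--Savart operator while the paper substitutes directly in the double integral for $\mathcal{H}$; these are the same computation.
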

	\begin{proof}[Proof of \Cref{LemmaExtra1}]
	We first prove that for given $\Sigma$ and $\lambda>0$ we have $\Lambda(\Sigma)=\Lambda(\Sigma_{\lambda})$ where $\Sigma_{\lambda}:=\{\lambda x\mid x\in \Sigma\}$. To see this we can start with an arbitrary vector field $w\in L^2\mathcal{V}_0(\Sigma)$ and define $w_{\lambda}(x):=w\left(\frac{x}{\lambda}\right)$ for $x\in \Sigma_{\lambda}$. One easily verifies that $w_{\lambda}\in L^2\mathcal{V}_0(\Sigma_{\lambda})$ and that accordingly the map $w\mapsto w_{\lambda}$ gives rise to an isomorphism between $L^2\mathcal{V}_0(\Sigma)$ and $L^2\mathcal{V}_0(\Sigma_{\lambda})$ (note that $\Sigma=(\Sigma_{\lambda})_{\frac{1}{\lambda}}$). Then, using the change of variable formula and noting that $\psi_{\lambda}:\Sigma\rightarrow \Sigma_{\lambda}\text{, }x\mapsto \lambda x$ provides an (orientation-preserving) diffeomorphism between $\Sigma$ and $\Sigma_{\lambda}$, we find $\|w_{\lambda}\|^2_{L^2(\Sigma_{\lambda})}=\lambda^2\|w\|^2_{L^2(\Sigma)}$ for all $w\in L^2\mathcal{V}_0(\Sigma)$. As for helicity, we can write, keeping in mind that we perform a double integration,
	\begin{gather}
		\nonumber
		\mathcal{H}(w_{\lambda})=\frac{1}{4\pi}\int_{\Sigma_{\lambda}}\int_{\Sigma_{\lambda}}w_{\lambda}(x)\cdot \left(w_{\lambda}(y)\times \frac{x-y}{|x-y|^3}\right)d\sigma(y)d\sigma(x)
		\\
		\nonumber
		=\frac{\lambda^4}{4\pi}\int_{\Sigma}\int_{\Sigma}w(q)\cdot \left(w(p)\times \frac{\lambda q-\lambda p}{\left|\lambda q-\lambda p\right|^3}\right)d\sigma(p)d\sigma(q)=\lambda^2\mathcal{H}(w).
	\end{gather}
	Since $w\mapsto w_{\lambda}$ defines an isomorphism we obtain
	\begin{gather}
		\nonumber
		\Lambda_{\pm}(\Sigma_{\lambda})=\max_{\substack{\tilde{w}\neq 0\\ \tilde{w}\in L^2\mathcal{V}_0(\Sigma_{\lambda})}}\frac{\pm\mathcal{H}(\tilde{w})}{\|\tilde{w}\|^2_{L^2(\Sigma_{\lambda})}}=\max_{\substack{w\neq 0\\ w\in L^2\mathcal{V}_0(\Sigma)}}\frac{\pm\mathcal{H}(w_{\lambda})}{\|w_{\lambda}\|^2_{L^2(\Sigma_{\lambda})}}=\max_{\substack{w\neq 0\\ w\in L^2\mathcal{V}_0(\Sigma)}}\frac{\pm\mathcal{H}(w)}{\|w\|^2_{L^2(\Sigma)}}=\Lambda_{\pm}(\Sigma)
	\end{gather}
	and therefore $\Lambda(\Sigma_{\lambda})=\Lambda(\Sigma)$ for all $\lambda>0$. We can now take sequences $(\Sigma_n)_n$ approaching either the infimum or the supremum on the left hand side of the statement of the lemma respectively and rescale the $\Sigma_n$ by appropriate $\lambda_n>0$ to normalise their area which in combination with $\Lambda(\Sigma_{\lambda})=\Lambda(\Sigma)$ immediately implies the claim.
	\end{proof}
	\begin{proof}[Proof of \Cref{T225}]
		We recall that according to \Cref{T223} and \Cref{T224} the $L^2(\Sigma)$-normalised vector fields $\gamma_+,\gamma_-$ realising the Rayleigh quotient (\ref{E4}) are elements of $\mathcal{H}(\Sigma)$ and eigenfields of $\pi\circ \operatorname{BS}_{\Sigma}$ where $\pi:L^2\mathcal{V}(\Sigma)\rightarrow L^2\mathcal{V}_0(\Sigma)$ denotes the $L^2(\Sigma)$-orthogonal projection onto $L^2\mathcal{V}_0(\Sigma)$. In addition, while proving the compactness of $\pi\circ \operatorname{BS}_{\Sigma}$ during the course of the proof of \Cref{T224}, we also showed that $\pi\circ \operatorname{BS}_{\Sigma}$ maps $L^2\mathcal{V}_0(\Sigma)$ onto $\mathcal{H}(\Sigma)$. Therefore, the operator $\pi\circ \operatorname{BS}_{\Sigma}:\mathcal{H}(\Sigma)\rightarrow \mathcal{H}(\Sigma)$ is well-defined and the Rayleigh quotient (\ref{E4}) is realised by some eigenfields of this restricted operator. However, the Hodge isomorphism \cite[Theorem 2.6.1]{S95}, implies $\dim\left(\mathcal{H}(\Sigma)\right)=\dim\left(H^1_{\operatorname{dR}}(\Sigma)\right)=2$, where $H^1_{\operatorname{dR}}(\Sigma)$ denotes the first de Rham cohomology group and where we used that $\Sigma\cong T^2$. It follows further from \Cref{T224} that $\pi\circ \operatorname{BS}_{\Sigma}$ admits at least one positive and one negative eigenvalue. Consequently, $\pi\circ \operatorname{BS}_{\Sigma}|_{\mathcal{H}(\Sigma)}$ is a self-adjoint operator on the $2$-dimensional Hilbert space $\left(\mathcal{H}(\Sigma),\langle\cdot,\cdot\rangle_{L^2(\Sigma)}\right)$. It therefore admits precisely two eigenvalues and thus one eigenvalue is strictly positive $\lambda_+(\Sigma)>0$ and one eigenvalue is negative $\lambda_-(\Sigma)<0$ with corresponding ($L^2(\Sigma)$-orthonormal) eigenfields $\gamma_+(\Sigma)$ and $\gamma_-(\Sigma)$ respectively. It follows immediately from the eigenspace decomposition that $\Lambda_+(\Sigma)=\mathcal{H}(\gamma_+)=\lambda_+(\Sigma)$ and $\Lambda_-(\Sigma)=-\mathcal{H}(\gamma_-)=-\lambda_-(\Sigma)=|\lambda_-(\Sigma)|$. In order to obtain a relation between $\lambda_+(\Sigma)$ and $\lambda_-(\Sigma)$ we will now find an explicit matrix representation of $\left(\pi\circ \operatorname{BS}_{\Sigma}\right)|_{\mathcal{H}(\Sigma)}$. To this end we let $\Omega\subset\mathbb{R}^3$ denote the bounded domain which is bounded by $\Sigma$, $\partial\Omega=\Sigma$. Then we can fix any $\Gamma\in \mathcal{H}_N(\Omega)\setminus \{0\}$ and as usual let $\gamma\in \mathcal{H}(\Sigma)$ denote the $L^2(\Sigma)$-orthogonal projection of $\Gamma|_{\Sigma}$ onto $\mathcal{H}(\Sigma)$. Upon rescaling we may assume that $\gamma$ is $L^2(\Sigma)$-normalised. We then define $\tilde{\gamma}:=\gamma\times\mathcal{N}$ where $\mathcal{N}$ is the outward unit normal and note that $\tilde{\gamma}\in \mathcal{H}(\Sigma)$ is also $L^2(\Sigma)$-normalised. We then observe that
		\begin{gather}
			\nonumber
			(\pi\circ \operatorname{BS}_{\Sigma})(\gamma)=\langle (\pi\circ \operatorname{BS}_{\Sigma})(\gamma),\gamma\rangle_{L^2(\Sigma)}\gamma+\langle (\pi\circ \operatorname{BS}_{\Sigma})(\gamma),\tilde{\gamma}\rangle_{L^2(\Sigma)}\tilde{\gamma}=\mathcal{H}(\gamma)\gamma+\mathcal{H}_c(\gamma,\tilde{\gamma})\tilde{\gamma}.
		\end{gather}
		Similarly we find $(\pi\circ\operatorname{BS}_{\Sigma})(\tilde{\gamma})=\mathcal{H}_c(\tilde{\gamma},\gamma)\gamma+\mathcal{H}(\tilde{\gamma})\tilde{\gamma}$ and therefore $\pi\circ \operatorname{BS}_{\Sigma}$ has the following matrix representation with respect to the basis $\mathcal{B}:=\{\gamma,\tilde{\gamma}\}$
		\begin{gather}
			\nonumber
			\mathcal{M}:=(\pi\circ \operatorname{BS}_{\Sigma})_{\mathcal{B}}^{\mathcal{B}}=\begin{pmatrix}
				\mathcal{H}(\gamma)      & \mathcal{H}_c(\gamma,\tilde{\gamma})  \\
				\mathcal{H}_c(\tilde{\gamma},\gamma)       & \mathcal{H}(\tilde{\gamma}) 
			\end{pmatrix}.
		\end{gather}
		It now follows from \Cref{L31} that $\mathcal{H}(\tilde{\gamma})=0$ and it follows from the first step in the proof of \Cref{T25} and the symmetry of the Biot-Savart operator that $2\mathcal{H}_c(\tilde{\gamma},\gamma)=2\mathcal{H}_c(\gamma,\tilde{\gamma})=\|\gamma\|^2_{L^2(\Sigma)}=1$ due to the normalisation of $\gamma$. We obtain $\mathcal{M}=\begin{pmatrix}
			\mathcal{H}(\gamma)      & \frac{1}{2}  \\
			\frac{1}{2}       & 0 
		\end{pmatrix}$ and conclude 
		\begin{gather}
			\label{ProofMinimisation1}
			\lambda_+(\Sigma)\lambda_-(\Sigma)=\det(\mathcal{M})=-\frac{1}{4}\Leftrightarrow \Lambda_+(\Sigma)\Lambda_-(\Sigma)=\frac{1}{4}.
		\end{gather}
		Now if both $\Lambda_+(\Sigma)$ and $\Lambda_-(\Sigma)$ would be strictly smaller than $\frac{1}{2}$ it would contradict (\ref{ProofMinimisation1}) and therefore we must have $\Lambda(\Sigma)\geq \frac{1}{2}$.
		
		Lastly we observe that $\Lambda_+(\Sigma)-\Lambda_-(\Sigma)=\lambda_+(\Sigma)+\lambda_-(\Sigma)=\operatorname{Trace}(\mathcal{M})=\mathcal{H}(\gamma)$. So if $\mathcal{H}(\gamma)=0$ we see that $\Lambda_+(\Sigma)=\Lambda_-(\Sigma)$ and then (\ref{ProofMinimisation1}) becomes $\Lambda^2(\Sigma)=\frac{1}{4}\Rightarrow \Lambda(\Sigma)=\frac{1}{2}$. Conversely, if $\Lambda(\Sigma)=\frac{1}{2}$ we note that (\ref{ProofMinimisation1}) can be equivalently expressed as $\Lambda(\Sigma)\min\{\Lambda_+(\Sigma),\Lambda_-(\Sigma)\}=\frac{1}{4}$ and that therefore $\min\{\Lambda_+(\Sigma),\Lambda_-(\Sigma)\}=\frac{1}{2}=\Lambda(\Sigma)$ and so $\Lambda_+(\Sigma)=\Lambda_-(\Sigma)$ so that $0=\operatorname{Trace}(\mathcal{M})=\mathcal{H}(\gamma)$.
	\end{proof}
	\subsection{Proof of \Cref{C226}}
	\begin{proof}[Proof of \Cref{C226}]
		According to \Cref{T225} we only need to prove that $\mathcal{H}(\gamma)=0$ whenever $\Sigma$ is a $C^{1,1}$-rotationally symmetric torus where we recall that $\gamma$ is the $L^2(\Sigma)$-orthogonal projection of any element $\Gamma\in \mathcal{H}_N(\Omega)\setminus \{0\}$ onto $\mathcal{H}(\Sigma)$ and where $\Omega$ denotes the bounded domain with boundary $\Sigma$. Upon applying isometries to $\Sigma$ we may assume that the axis of symmetry of $\Sigma$ is the $z$-axis. One can then consider the vector field $Y(x,y,z):=(-y,x,0)^{\operatorname{tr}}$ defined on all of $\mathbb{R}^3$ and which induces isometries. Since $\Sigma$ is a rotationally symmetric regular torus, it does not intersect the axis of symmetry so that in particular $\Omega$ does not intersect it either. One can therefore consider the vector field $\Gamma:=\frac{Y}{|Y|^2}$ on $\Omega$ which can be easily verified to satisfy $\operatorname{curl}(\Gamma)=0$ and $\operatorname{div}(\Gamma)=0$. In addition, since $Y$ generates rotations around the $z$-axis, we find $Y\parallel \Sigma$ and hence $\Gamma\parallel \Sigma$ so that $\Gamma\in \mathcal{H}_N(\Omega)\setminus \{0\}$. Since $\Gamma$ is curl-free and tangent to $\Sigma$ it is standard that $\Gamma|_{\Sigma}$ is also curl-free as a vector field on $\Sigma$. Finally we note that $Y$ induces isometries so that it is a Killing field. But the restriction of Killing fields to invariant surfaces remain Killing so that one finds that $Y|_{\Sigma}$ is div-free. In addition, a direct calculation yields $Y\cdot \operatorname{grad}(|Y|^2)=0$ on all of $\mathbb{R}^3$ so that the product rule gives us $\operatorname{div}_{\Sigma}(\Gamma|_{\Sigma})=0$. Consequently $\Gamma|_{\Sigma}\in \mathcal{H}(\Sigma)$ and so $\gamma=\Gamma|_{\Sigma}$. Lastly we note that $\Gamma|_{\Sigma}$ and $Y|_{\Sigma}$ have the same field lines which are all mutually unlinked and periodic with a uniformly bounded period. It then follows from the linking interpretation of helicity, \Cref{T26} and \Cref{R27}, that $\mathcal{H}(\gamma)=0$ so that the claim follows from \Cref{T225}.
	\end{proof}
	\subsection{Proof of \Cref{T230}}
	\begin{proof}[Proof of \Cref{T230}]
		We recall that we call an element $j\in L^2\mathcal{V}_0(\Sigma)$ simple, if it satisfies $\int_{\Sigma}j\cdot \gamma_td\sigma=0$ or equivalently, c.f. \Cref{L212}, $\overline{Q}(j)=0$ where $\gamma_t$ is the unique element of $\mathcal{H}(\Sigma)$ satisfying $\int_{\sigma_p}\gamma_t=0$ and $\int_{\sigma_t}\gamma_t=1$ (recall also that the notion of being simple is independent of the chosen curves $\sigma_p,\sigma_t$). We have to show that for any given $j\in L^2\mathcal{V}_0(\Sigma)$ there exists some $j^\prime\in L^2\mathcal{V}_0(\Sigma)$ which is simple and satisfies $\operatorname{BS}(j)(x)=\operatorname{BS}(j^\prime)(x)$ for all $x\in \Omega$, where $\operatorname{BS}(j)(x):=\frac{1}{4\pi}\int_{\Sigma}j(y)\times \frac{x-y}{|x-y|^3}d\sigma(y)$ corresponds physically to the magnetic field inside the bounded domain $\Omega$ bounded by $\Sigma$ which is induced by the current density $j\in L^2\mathcal{V}_0(\Sigma)$. So let $j\in L^2\mathcal{V}_0(\Sigma)$ be given, it then follows from \cite[Theorem 5.1]{G24} that there exists some $j_0\in \operatorname{Ker}(\operatorname{BS})\setminus\{0\}\subset L^2\mathcal{V}_0(\Sigma)\cap \bigcap_{0<\alpha<1}C^{0,\alpha}\mathcal{V}(\Sigma)$ so that $\operatorname{BS}(j+\alpha j_0)=\operatorname{BS}(j)$ for every $\alpha\in \mathbb{R}$. Further, it was shown in the first step of the proof of \cite[Proposition 5.8]{G24} that if we fix any $\Gamma\in \mathcal{H}_N(\Omega)\setminus \{0\}$, then $j_0$ can be chosen such that $\int_{\Sigma}j_0(y)\cdot \Gamma(y)d\sigma(y)=\|\Gamma\|^2_{L^2(\Omega)}\neq 0$. Now we observe first that $\operatorname{curl}(\Gamma)=0$ in $\Omega$ implies that $\Gamma|_{\Sigma}$ is curl-free on $\Sigma$ and hence we can use the Hodge decomposition theorem to express $\Gamma=\operatorname{grad}_{\Sigma}(f)+\gamma$ for suitable $f\in \bigcap_{0<\alpha<1}C^{1,\alpha}(\Sigma)$ and $\gamma\in \mathcal{H}(\Sigma)$. Since $j_0$ is div-free it is $L^2(\Sigma)$-orthogonal to all gradient fields, which yields $\int_{\Sigma}j_0\cdot \gamma d\sigma=\int_{\Sigma}j_0\cdot \Gamma d\sigma\neq 0$. Finally we recall that we have shown in the proof of \Cref{T213} that $\gamma_t=\frac{\gamma}{\int_{\sigma_t}\gamma}$ and consequently $\int_{\Sigma}j_0\cdot \gamma_td\sigma\neq 0$. Hence, by linearity, we may always find a suitable $\alpha_j\in \mathbb{R}$ such that $\int_{\Sigma}(j+\alpha_j j_0)\cdot \gamma_td\sigma=0$ so that we may set $j^\prime:=j+\alpha_jj_0$ and the regularity claims follow immediately from the regularity of $j_0$. Finally, we need to show that such a $j^\prime$ is unique. To this end, suppose $j^\prime,\hat{j}\in L^2\mathcal{V}_0(\Sigma)$ are simple and satisfy $\operatorname{BS}(j^\prime)=\operatorname{BS}(\hat{j})$. Then $\bar{j}:=j^\prime-\hat{j}$ is also simple and contained in the kernel of $\operatorname{BS}$. Therefore $\bar{j}\in \operatorname{Ker}(\operatorname{BS})$ and consequently there exists some $\beta\in \mathbb{R}$ with $\bar{j}=\beta j_0$ where $j_0$ is chosen like above. In particular, $\int_{\Sigma}j_0\cdot \gamma_td\sigma\neq 0$ so that the simplicity of $\bar{j}$ implies $\beta=0$ and consequently $j^\prime=\hat{j}$.
	\end{proof}
	\subsection{Proof of \Cref{C231}}
	\begin{proof}[Proof of \Cref{C231}]
		It follows from \cite[Corollary 3.10 (iii,b)]{G24} that for every $B_T\in L^2\mathcal{H}(P)$ and every $\epsilon>0$ there exists some $j\in L^2\mathcal{V}_0(\Sigma)$ with $\|\operatorname{BS}(j)-B_T\|_{L^2(P)}\leq \epsilon$. Then \Cref{T230} implies that there exists $j^\prime\in L^2\mathcal{V}_0(\Sigma)$ which is simple and with $\operatorname{BS}(j)=\operatorname{BS}(j^\prime)$ which concludes the proof.
	\end{proof}
	\subsection{Proof of \Cref{T232}}
	\begin{proof}[Proof of \Cref{T232}]
		The proof is in essence identical to the proof of \cite[Proposition 4.1]{G24}. For convenience of the reader we present it here nonetheless. Given $B_T\in L^2\mathcal{H}(P)$ we can fix any $\epsilon>0$. According to \Cref{C231} we can find some $j_{\epsilon}\in L^2\mathcal{V}_0(\Sigma)$ which is simple and such that $\|\operatorname{BS}(j_{\epsilon})-B_T\|_{L^2(P)}^2\leq \epsilon$. Then, by definition, $j_{\lambda}^0$ is the unique global minimiser of $C_0(\lambda;B_T)$ (recall (\ref{E25})) so that
		\begin{gather}
			\nonumber
			C_0(\lambda;B_T)\leq \|\operatorname{BS}(j_{\epsilon})-B_T\|^2_{L^2(P)}+\lambda \|j_{\epsilon}\|^2_{L^2(\Sigma)}\leq \epsilon+\lambda\|j_{\epsilon}\|^2_{L^2(\Sigma)}
		\end{gather}
		by choice of $j_{\epsilon}$. Further, we note that
		\begin{gather}
			\nonumber
			\|\operatorname{BS}(j_{\lambda}^0)-B_T\|^2_{L^2(P)}\leq \|\operatorname{BS}(j_{\lambda}^0)-B_T\|^2_{L^2(P)}+\lambda\|j^0_{\lambda}\|^2_{L^2(\Sigma)}=C_0(\lambda;B_T)
		\end{gather}
		and hence
		\begin{gather}
			\nonumber
			\|\operatorname{BS}(j^0_{\lambda})-B_T\|^2_{L^2(P)}\leq \epsilon+\lambda\|j_{\epsilon}\|^2_{L^2(\Sigma)}.
		\end{gather}
		We recall that $j_{\epsilon}$ depends only on $B_T$ and $\epsilon$ but is independent of $\lambda$. Therefore we may take the limit and obtain
		\begin{gather}
			\nonumber
			0\leq \liminf_{\lambda \searrow 0}	\|\operatorname{BS}(j^0_{\lambda})-B_T\|^2_{L^2(P)}\leq \limsup_{\lambda \searrow 0}	\|\operatorname{BS}(j^0_{\lambda})-B_T\|^2_{L^2(P)}\leq \epsilon.
		\end{gather}
		Since $\epsilon>0$ was arbitrary we conclude $\lim_{\lambda\searrow 0}\|\operatorname{BS}(j^0_{\lambda})-B_T\|_{L^2(P)}=0$.
	\end{proof}
\section*{Acknowledgements}
I would like to thank Ugo Boscain and Mario Sigalotti for fruitful discussions about the interpretation of surface helicity, in particular for suggesting to try to artificially close the field lines as depicted in \Cref{ClosedCurve}. This work has been supported by the Inria AEX StellaCage.
\appendix
\section{Continuity of surface Biot-Savart operator}
\label{AS1}
\begin{lem}
	\label{AL1}
	\Cref{D21} is well-defined, i.e. the Biot-Savart operator indeed defines a $q$-integrable vector field for any choice of orthonormal frame and it is independent of the chosen frame.
\end{lem}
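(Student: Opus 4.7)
The plan is to reduce both claims—frame independence and $L^q$-integrability of the output—to a single pointwise identity together with a single pointwise kernel estimate.

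First I would observe that the local formula $\sum_{i=1}^{2}(w\cdot E_i(x))E_i(x)$ is nothing but the orthogonal projection of any vector $w\in\mathbb{R}^3$ onto the tangent plane $T_x\Sigma$, namely
\[
\sum_{i=1}^{2}(w\cdot E_i(x))E_i(x)=w-(w\cdot\mathcal{N}(x))\mathcal{N}(x).
\]
Since the right-hand side is intrinsic, any two $C^{0,1}$-orthonormal frames defined on overlapping open sets produce the same tangent vector pointwise on the overlap, and consequently any two covers of $\Sigma$ induce the same global vector field. This already settles the frame-independence part of the lemma. It also reduces the operator to the form $\operatorname{BS}_\Sigma(v)(x)=K(v)(x)-(K(v)(x)\cdot \mathcal{N}(x))\mathcal{N}(x)$ with $K(v)(x):=\frac{1}{4\pi}\int_\Sigma v(y)\times\frac{x-y}{|x-y|^3}d\sigma(y)$, so that the output is automatically tangent to $\Sigma$ wherever it is defined.

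The core step is then the pointwise kernel estimate
\[
\left|\left(v(y)\times\tfrac{x-y}{|x-y|^3}\right)^{\parallel}\right|\leq \frac{C(\Sigma)}{|x-y|}\,|v(y)|,\qquad x,y\in\Sigma,\ x\neq y,
\]
where $(\cdot)^{\parallel}$ denotes the tangential projection onto $T_x\Sigma$. To obtain it I would split $x-y=(x-y)^{T_y}+(\mathcal{N}(y)\cdot(x-y))\mathcal{N}(y)$. The $C^{1,1}$-regularity of $\Sigma$ yields the classical bound $|\mathcal{N}(y)\cdot(x-y)|\leq C|x-y|^2$, so the normal-at-$y$ contribution to $v(y)\times(x-y)/|x-y|^3$ is already of size $O(|v(y)|/|x-y|)$. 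The remaining piece $v(y)\times(x-y)^{T_y}/|x-y|^3$ lies along $\mathcal{N}(y)$, since both factors are tangent at $y$; its tangential projection onto $T_x\Sigma$ therefore picks up a factor of $|\mathcal{N}(y)-(\mathcal{N}(y)\cdot \mathcal{N}(x))\mathcal{N}(x)|$, which is $O(|x-y|)$ by the Lipschitz continuity of $\mathcal{N}$ on a $C^{1,1}$-surface. Combined with the trivial bound $|(x-y)^{T_y}|\leq|x-y|$ this part is also bounded by $O(|v(y)|/|x-y|)$, and the claimed estimate follows.

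With this estimate, $\operatorname{BS}_\Sigma$ is pointwise dominated by a Riesz potential of order one in dimension two on the compact surface $\Sigma$, and the advertised $L^p\to L^q$ mapping properties become classical. Specifically, the Hardy--Littlewood--Sobolev inequality (applied in local charts and patched by a finite partition of unity) delivers $L^p(\Sigma)\to L^q(\Sigma)$ for $1<p<2$ with $q=2p/(2-p)$; for $p=2$ one first embeds $L^2(\Sigma)\hookrightarrow L^{2-\epsilon}(\Sigma)$ and applies HLS to reach every finite $q$; and for $2<p\leq\infty$ Hölder's inequality together with the uniform bound $\sup_{x\in\Sigma}\int_\Sigma |x-y|^{-p'}d\sigma(y)<\infty$ (valid because $p'<2$) yields $L^p(\Sigma)\to L^\infty(\Sigma)$. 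The only genuine obstacle is the cancellation-based kernel estimate above; once that is in place, tangentiality of the output is automatic from the projection step and the continuity statement reduces to classical tools.
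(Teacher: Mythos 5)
Your proposal is correct and follows essentially the same route as the paper: frame independence via the intrinsic tangential projection, the key $O(|v(y)|/|x-y|)$ bound on the projected kernel obtained from the two geometric facts $|\mathcal{N}(y)\cdot(x-y)|\leq C|x-y|^2$ and the Lipschitz continuity of $\mathcal{N}$, and then Hardy--Littlewood--Sobolev (with Hölder for $p>2$ and the embedding $L^2\hookrightarrow L^{2-\epsilon}$ for $p=2$). The only difference is cosmetic: you decompose $x-y$ into its tangential and normal parts at $y$, whereas the paper writes $v(y)=u(y)\times\mathcal{N}(y)$ and applies the vector triple product rule, but both bookkeepings isolate exactly the same cancellations.
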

\begin{proof}[Proof of \Cref{AL1}]
	First let us suppose that we have shown that
	\[
	\sum_{i=1}^2\left(\int_\Sigma v(y)\times \frac{x-y}{|x-y|^3}d\sigma(y)\cdot E_i(x)\right)E_i(x)
	\]
	is finite for a.e. $x$ in the domain of definition of any fixed orthonormal frame $E_i$. Then, if $E_i$ and $\tilde{E}_j$ are any two orthonormal frames whose domains of definition overlap, then on this overlap the integrals will exist almost surely for any $x$ in the overlap. For any such fixed $x$ we then have
	\begin{gather}
		\nonumber
		\sum_{i=1}^2\left(\int_\Sigma v(y)\times \frac{x-y}{|x-y|^3}d\sigma(y)\cdot E_i(x)\right)E_i(x)
		\\
		\nonumber
		=\sum_{j=1}^2\sum_{i=1}^2\left(\int_\Sigma v(y)\times \frac{x-y}{|x-y|^3}d\sigma(y)\cdot E_i(x)\right)(E_i(x)\cdot \tilde{E_j}(x))\tilde{E}_j(x)
		\\
		\nonumber
		=\sum_{j=1}^2\left(\int_\Sigma v(y)\times \frac{x-y}{|x-y|^3}d\sigma(y)\cdot \sum_{i=1}^2\left[(E_i(x)\cdot \tilde{E_j}(x))E_i(x)\right]\right)\tilde{E}_j(x)
		\\
		\nonumber
		=\sum_{j=1}^2\left(\int_\Sigma v(y)\times \frac{x-y}{|x-y|^3}d\sigma(y)\cdot \tilde{E}_j(x)\right)\tilde{E}_j(x)
	\end{gather}
	and so the value of the Biot-Savart operator is independent of the chosen local orthonormal frames.
	
	In order to establish the $q$-integrability we define $u(y):=\mathcal{N}(y)\times v(y)$ and fix any $w\in T_x\Sigma$. We observe that because $v(y)$ is tangent to $\Sigma$ we have $v(y)=u(y)\times \mathcal{N}(y)$. Then, using the vector triple product rule, we find
	\begin{gather}
		\nonumber
		\int_\Sigma v(y)\times \frac{x-y}{|x-y|^3}d\sigma(y)\cdot w=\int_\Sigma \left(u(y)\cdot \frac{x-y}{|x-y|^3}\right)\mathcal{N}(y)-\left(\mathcal{N}(y)\cdot \frac{x-y}{|x-y|^3}\right)u(y)d\sigma(y)\cdot w.
	\end{gather}
	It is well-known that there is some $c>0$ satisfying $\left|\frac{x-y}{|x-y|^3}\cdot \mathcal{N}(y)\right|\leq \frac{c}{|x-y|}$ for all $y,x\in \Sigma$ with $y\neq x$, c.f. \cite[Lemma 41.12]{Ser17}. On the other hand, we have, since $w$ is orthogonal to $\mathcal{N}(x)$,
	\[
	\left|\left(u(y)\cdot \frac{x-y}{|x-y|^3}\right)(\mathcal{N}(y)\cdot w)\right|=\left|\left(u(y)\cdot \frac{x-y}{|x-y|^3}\right)(\mathcal{N}(y)-\mathcal{N}(x))\cdot w\right|\leq L\frac{|v(y)||w|}{|x-y|}
	\]
	where $L$ is the Lipschitz character of $\mathcal{N}$ and we used that $|u(y)|=|v(y)|$. Therefore, if $|w|=1$, then we see that there is some constant $c_0>0$ which is independent of $v$ such that
	\[
	\left|\int_\Sigma v(y)\times \frac{x-y}{|x-y|^3}d\sigma(y)\cdot w\right|\leq c_0\int_\Sigma \frac{|v(y)|}{|x-y|}d \sigma(y).
	\]
	It follows easily from the dominated convergence theorem that if $1\leq \beta <2$ then the map $\Sigma\rightarrow\mathbb{R}$, $x\mapsto \int_\Sigma \frac{1}{|x-y|^\beta}d\sigma(y)$ is continuous so that it is globally bounded on $\Sigma$. Hence, the above bound ensures that $\operatorname{BS}_\Sigma(v)\in L^\infty\mathcal{V}(\Sigma)$ for any $v\in L^p\mathcal{V}(\Sigma)$ with $p>2$ by an application of H\"{o}lder's inequality.
	
	If $v\in L^2\mathcal{V}(\Sigma)$, then in particular $v\in L^p\mathcal{V}(\Sigma)$ for any $1<p<2$ and so the claim will follow from the case $p<2$ after observing that $\frac{2p}{2-p}$ converges to infinity as $p\nearrow 2$.
	
	Suppose now that $1<p<2$. In that case one can use a partition of unity to localise the problem and hence reduce the situation to an application of the standard Hardy-Littlewood-Sobolev inequality, c.f. \cite[Chapter V]{S70}.
\end{proof}
\section{Physical Rotational transform}
\label{SB}
To simplify the discussion, we consider here only "unknotted" smooth solid tori $\Omega\subset \mathbb{R}^3$ in the sense that we demand that there exists an orientation preserving $C^{\infty}$-diffeomorphism $\Psi:\mathbb{R}^3\rightarrow\mathbb{R}^3$ such that $\Omega=\Phi(\Omega_S)$ where $\Omega_S:=\{(x,y,z)^{\operatorname{tr}}\in \mathbb{R}^3\mid (\sqrt{x^2+y^2}-2)^2+z^2\leq1\}$ is the standard rotationally symmetric solid torus bounded by a torus with minor radius $r=1$ and major radius $R=2$. We have the following parametrisation of $\Sigma_S:=\partial\Omega_S$, $(\phi,\theta)\mapsto \mu(\phi,\theta)=\left((2+\cos(\theta))\cos(\phi),(2+\cos(\theta))\sin(\phi),\sin(\theta)\right)^{\operatorname{tr}}$, $0\leq \phi,\theta\leq 2\pi$ where $\phi$ is called the "toroidal" angle and $\theta$ is called the "poloidal angle". Further, we consider the following curves $\sigma^S_p(\tau):=\left(2+\cos(\tau),0,\sin(\tau)\right)^{\operatorname{tr}}$, $\sigma^S_t(\tau):=\left(\cos(\tau),\sin(\tau),0\right)^{\operatorname{tr}}$, $0\leq \tau \leq 2\pi$ and call them the standard purely poloidal and standard purely toroidal loops. We then refer to $\sigma_p:=\Phi\circ \sigma^S_p$ and $\sigma_t:=\Phi\circ \sigma^S_t$ as a purely poloidal and a purely toroidal curve on $\Sigma=\partial\Omega$.

In addition, we observe that for any fixed $\phi\in [0,2\pi)$, the curve $\sigma_{\phi}(\tau):=\mu(\phi,\tau)$, $0\leq \tau\leq 2\pi$, is a closed circle (which bounds a disc within $\Omega_S$). We call the family of images, $\Sigma^{\phi}_S:=\sigma_{\phi}([0,2\pi])$, $(\Sigma^{\phi}_S)_{0\leq \phi<2\pi}$ the standard foliation of $\Sigma_S$ into poloidal sections. Accordingly we call $(\Sigma_{\phi})_{0\leq \phi<2\pi}$ with $\Sigma_{\phi}:=\Phi(\Sigma^S_{\phi})$ a foliation of $\Sigma$ into poloidal sections.

Now one often assumes in plasma physics that the magnetic field lines of plasma equilibria $B$ are transversal to the poloidal sections, i.e. never tangent to any such section, recall also \Cref{R222} regarding the flow structure of plasma equilibria. Then, given a field line of $B$ starting at some initial poloidal section $\Sigma_{\phi_0}$, it will, due to the transversality assumption, continue to move along these poloidal sections $\Sigma_{\phi}$ until it returns to the initial poloidal section $\Sigma_{\phi_0}$. During this toroidal turn, the field line will also have made a certain amount of poloidal turns $(\Delta \theta)_1$. More precisely, we may write $\sigma(\tau)=\Phi(\sigma_S(\tau))$ where $\sigma_S(\tau)$ is a curve on $\Sigma_S$ parametrised by $\sigma_S(\tau)=\mu(\phi(\tau),\theta(\tau))$ for suitable smooth functions $\phi(\tau)$ and $\theta(\tau)$. The transversality condition demands that $\phi(\tau)$ is a strictly increasing function in $\tau$. Therefore there will be a unique $0<\tau_1<\infty$ with $\phi(\tau_1)=2\pi$ which corresponds to a full toroidal turn upon which we returned to $\Sigma_{\phi_0}$. Then we set $(\Delta \theta)_1:=\theta(\tau_1)$. Similarly we can continue to follow our field line until we arrive again at $\Sigma_{\phi_0}$ which uniquely determines a smallest time $\tau_1<\tau_2<\infty$ with $\phi(\tau_2)=4\pi$. Accordingly we define $(\Delta \theta)_2:=\theta(\tau_2)-\theta(\tau_1)$ to be the amount of poloidal turns made during the second toroidal turn. Similarly we define $(\Delta \theta)_k$ to be the amount of poloidal turns during the $k$-th toroidal turn.

One way to define "physical rotational transform" of a magnetic field line $\sigma$ starting at $\Sigma_{\phi_0}$ is then as follows, c.f. \cite[Chapter 11.7.8]{Frei07},
\begin{gather}
	\nonumber
	\iota_P(\sigma):=\lim_{n\rightarrow\infty}\frac{\sum_{k=1}^n(\Delta\theta)_k}{2\pi n}
\end{gather}
whenever the limit exists.

Suppose for now that $\sigma$ is periodic, then it will be homotopic to $P\sigma_p\oplus Q\sigma_t$ for suitable $P,Q\in \mathbb{Z}$. We observe that this means that $\sigma$ performs precisely $Q$ toroidal turns and $P$ poloidal turns within one period. We are assuming (without loss of generality) that $\sigma$ moves in the same toroidal direction as $\sigma_t$ which corresponds to $Q\geq 0$ and the transversality assumption ensures $Q\geq 1$. We know that after every $Q$ toroidal turns, we performed $P$ poloidal turns so that $\sum_{i=1}^{kQ}(\Delta\theta)_i=2\pi kP$. Thus, if $kQ\leq n<(k+1)Q$, we find $\sum_{i=1}^n(\Delta\theta)_i=2\pi kP+\sum_{i=kQ+1}^{n}(\Delta\theta)_i$ and we observe that $-2\pi P\leq \sum_{i=kQ+1}^{n}(\Delta\theta)_i\leq 2\pi P$ because we made less than $Q$ additional toroidal turns and hence at most $P$ poloidal turns. We note further that $Q\leq \frac{n}{k}\leq (1+\frac{1}{k})Q$ so that $\frac{n}{k}\rightarrow Q$ as $n\rightarrow\infty$. Hence, we overall arrive at
\begin{gather}
	\nonumber
\iota_P(\sigma)=\lim_{n\rightarrow\infty}\frac{\sum_{k=1}^n(\Delta\theta)_k}{2\pi n}=\lim_{n\rightarrow\infty}\left(\frac{k}{n}P+\frac{R(n)}{n}\right)=\frac{P}{Q}
\end{gather}
where $R(n)$ is the remainder term which we have shown to be bounded in absolute value by $2\pi P$. We recall that with our "mathematical" rotational transform $\iota$ we have, c.f. \Cref{R222}, $\iota(\sigma)=\frac{\int_{\Sigma}\widetilde{B}\cdot \gamma_pd\sigma}{\int_{\Sigma}\widetilde{B}\cdot \gamma_t d\sigma}$ where $\widetilde{B}=fB$ and $f>0$ is a suitable positive function related to the pressure function of the plasma equilibrium and $B$ is the underlying magnetic field. Most importantly, a field line $\sigma$ of $B$ starting at some point in $\Sigma$ is of type $P\sigma_p\oplus Q\sigma_t$ if and only if the same is true for $\widetilde{B}$. But then \Cref{L212} and (\ref{E2}) imply that $\iota(\sigma)=\frac{P}{Q}=\iota_P(\sigma)$ so that the rotational transform agrees in this case, recall also \Cref{T218}.

If the field line $\sigma$ is not closed, we can close it artificially. The idea is that at time $\tau_n>0$ when we return for the $n$-th time to the initial poloidal section $\Sigma_{\phi_0}$ we close the curve by following the path $\sigma_{\phi_0}$ from $\sigma(\tau_n)$ along the poloidal section until we reach the first point on $\Sigma_{\phi_0}$ which was intersected by $\sigma$ previously (recall that we will have visited $\Sigma_{\phi_0}$ by that time precisely $(n-1)$-times resulting (due to the non-periodicity) in $(n-1)$ intersection points and since $\Sigma_{\phi_0}$ is a circle, we must necessarily arrive at such a point, provided $n\geq 2$). The important observation is that by means of this closing procedure we add at most an angle $|\alpha_n|\leq 2\pi$ to the total toroidal angle. If we denote by $\tilde{\sigma}_n=\sigma[0,\tau_n]\oplus \sigma_{\phi_0}$ the corresponding concatenated path, we see that it will have made precisely $n$ toroidal turns and $P_n\in \mathbb{Z}$ poloidal turns. Hence, if we let $\iota^n_P(\sigma):=\frac{\sum_{k=1}^n(\Delta\theta)_k}{2\pi n}$ then we obtain
\begin{gather}
	\nonumber
	\iota^n_P(\sigma)=\frac{P_n}{n}-\frac{\alpha_n}{2\pi n}\text{, }|\alpha_n|\leq 2\pi\text{, }\iota_P(\sigma)=\lim_{n\rightarrow\infty}\iota^n_P(\sigma).
\end{gather}
On the other hand, with the "mathematical" definition of rotational transform we find
\begin{gather}
	\nonumber
	\iota(\sigma)=\lim_{T\rightarrow\infty}\frac{\int_{\sigma[0,T]}\gamma_p}{\int_{\sigma[0,T]}\gamma_t}
\end{gather}
and know that this limit exists for a.e. $x\in \Sigma$ at which the field line $\sigma$ starts and that the limit is independent of the chosen sequence $T\rightarrow\infty$, c.f. \Cref{T218}. We can therefore consider the sequence $T=\tau_n$ which diverges to $+\infty$ as $n\rightarrow\infty$. But we observe that $\int_{\sigma[0,\tau_n]}\gamma_p=\int_{\tilde{\sigma}_n}\gamma_p-\int_{\sigma_{\phi_0}[t^1_n,t^2_n]}\gamma_p=P_n-r_n$ where we define the remainder $r_n:=\int_{\sigma_{\phi_0}[t^1_n,t^2_n]}\gamma_p$ and where we used that $\tilde{\sigma}_n$ is closed and makes $P_n$ poloidal turns. We observe further that $|r_n|$ can be uniformly bounded because $\gamma_p$ is globally bounded on $\Sigma$, $\dot{\sigma}_{\phi}$ is globally bounded along any poloidal section and because by construction $t^2_n-t^1_n=|\alpha_n|\leq 2\pi$ is the additional toroidal angle which we need to sweep out to close our curve. Identically we can argue that $\int_{\sigma[0,\tau_n]}\gamma_t=n+R_n$ where $R_n$ is a uniformly (in $n$) bounded error term and we made use of the fact that $\tilde{\sigma}_n$ makes $n$ toroidal turns. We therefore arrive at
\begin{gather}
	\nonumber
	\frac{\int_{\sigma[0,\tau_n]}\gamma_p}{\int_{\sigma[0,\tau_n]}\gamma_t}=\frac{P_n-r_n}{n+R_n}=\frac{\frac{P_n}{n}-\frac{r_n}{n}}{1+\frac{R_n}{n}}=\frac{\iota^n_P(\sigma)+\frac{\alpha_n}{2\pi n}-\frac{r_n}{2\pi}}{1+\frac{R_n}{n}}.
\end{gather}
Since the limit on the left hand side exists we conclude that also the physical rotational transform is well-defined and that $\iota(\sigma)=\lim_{n\rightarrow\infty}\frac{\int_{\sigma[0,\tau_n]}\gamma_p}{\int_{\sigma[0,\tau_n]}\gamma_t}=\lim_{n\rightarrow\infty}\iota^n_P(\sigma)=\iota_P(\sigma)$ and therefore the physical rotational transform and mathematical rotational transform always coincide and hence \Cref{T218} provides a way to compute the rotational transform of a plasma equilibrium $B$, see also \Cref{R222}.
\section{Eigenfields of the surface Biot-Savart operator}
\label{SC}
\begin{prop}[Biot-Savart eigenfields]
	\label{CP1}
	Let $T^2\cong \Sigma\subset\mathbb{R}^3$ be a closed, connected $C^{1,1}$-torus and let $\Omega\subset\mathbb{R}^3$ be the unique bounded domain whose boundary is $\Sigma$. Let further $\Gamma\in \mathcal{H}_N(\Omega)\setminus\{0\}$ and let $\gamma$ denote the $L^2(\Sigma)$-orthogonal projection of $\Gamma|_{\Sigma}$ onto $\mathcal{H}(\Sigma)$ and set $\tilde{\gamma}:=\gamma\times \mathcal{N}$ where $\mathcal{N}$ denotes the outward unit normal. Further we scale $\Gamma$ such that $\gamma$ is $L^2(\Sigma)$-normalised. Then the set of vector fields $v_{\pm}\in \mathcal{H}(\Sigma)$ realising the Rayleigh quotient $\Lambda_{\pm}(\Sigma)$, c.f. (\ref{E4}), are $1$-dimensional subspaces $E_{\pm}(\Sigma)$ of $\mathcal{H}(\Sigma)$ respectively and spanned by the following vector fields
	\begin{gather}
		\nonumber
		E_+(\Sigma)=\operatorname{span}\left\{\left(\sqrt{\mathcal{H}^2(\gamma)+1}+\mathcal{H}(\gamma)\right)\gamma+\tilde{\gamma}\right\}\text{, }E_-(\Sigma)=\operatorname{span}\left\{\gamma-\left(\sqrt{\mathcal{H}^2(\gamma)+1}+\mathcal{H}(\gamma)\right)\tilde{\gamma}\right\}
	\end{gather}
	and we have
	\begin{gather}
		\nonumber
		\Lambda_+(\Sigma)=\frac{\sqrt{\mathcal{H}^2(\gamma)+1}+\mathcal{H}(\gamma)}{2}\text{, }\Lambda_-(\Sigma)=\frac{\sqrt{\mathcal{H}^2(\gamma)+1}-\mathcal{H}(\gamma)}{2}.
	\end{gather}
\end{prop}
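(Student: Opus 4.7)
The plan is to diagonalise the two-by-two matrix representation of $\pi\circ \operatorname{BS}_{\Sigma}$ on $\mathcal{H}(\Sigma)$ that was already computed in the course of the proof of \Cref{T225}, and then invoke \Cref{T224} to transfer the eigenspace information to the statement about the Rayleigh quotients.

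More precisely, I would first recall from \Cref{T225} that, because $\Sigma\cong T^2$ and hence $\dim\mathcal{H}(\Sigma)=2$, the pair $\mathcal{B}=\{\gamma,\tilde{\gamma}\}$ is an $L^2(\Sigma)$-orthonormal basis of $\mathcal{H}(\Sigma)$ (using that taking the cross product with $\mathcal{N}$ is an $L^2$-isometry on $\mathcal{H}(\Sigma)$). The same computation carried out there, combined with $\mathcal{H}(\tilde{\gamma})=0$ (\Cref{L31}) and $2\mathcal{H}_c(\gamma,\tilde{\gamma})=\|\gamma\|^2_{L^2(\Sigma)}=1$ (from Step 1 of the proof of \Cref{T25}), yields
\[
\mathcal{M}=(\pi\circ \operatorname{BS}_{\Sigma})_{\mathcal{B}}^{\mathcal{B}}=\begin{pmatrix} \mathcal{H}(\gamma) & 1/2 \\ 1/2 & 0 \end{pmatrix}.
\]
The characteristic polynomial is $\lambda^2-\mathcal{H}(\gamma)\lambda-\tfrac{1}{4}=0$, whose roots are
\[
\lambda_{\pm}=\frac{\mathcal{H}(\gamma)\pm\sqrt{\mathcal{H}^2(\gamma)+1}}{2},
\]
so that $\lambda_+>0>\lambda_-$ unconditionally.

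Next I would identify the eigenvectors by straightforward linear algebra: from the second row of $\mathcal{M}-\lambda_{\pm}I$ the eigenvector condition $\tfrac{1}{2}v_1-\lambda_{\pm}v_2=0$ yields $v_2=v_1/(2\lambda_{\pm})$. Using the relation $\lambda_+\lambda_-=-\tfrac{1}{4}$, equivalently $1/(2\lambda_-)=-2\lambda_+$, the eigenvectors can be cleaned up to $(2\lambda_+,1)$ for $\lambda_+$ and $(1,-2\lambda_+)$ for $\lambda_-$, i.e.
\[
(\sqrt{\mathcal{H}^2(\gamma)+1}+\mathcal{H}(\gamma))\gamma+\tilde{\gamma}\quad\text{and}\quad \gamma-(\sqrt{\mathcal{H}^2(\gamma)+1}+\mathcal{H}(\gamma))\tilde{\gamma},
\]
matching the statement. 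Finally, I would appeal to \Cref{T224}: since $\mathcal{H}(\Sigma)$ is exactly the image of $\pi\circ \operatorname{BS}_{\Sigma}$ (as shown while proving compactness in \Cref{T224}), the largest positive and smallest negative eigenvalues of $\pi\circ \operatorname{BS}_{\Sigma}$ on $L^2\mathcal{V}_0(\Sigma)$ coincide with $\lambda_+$ and $\lambda_-$, and the corresponding eigenspaces coincide with those of $\mathcal{M}$. Thus $\Lambda_+(\Sigma)=\lambda_+$ and $\Lambda_-(\Sigma)=-\lambda_-$, which unfolds to the stated formulas, and the one-dimensionality of $E_{\pm}(\Sigma)$ follows from the distinctness of $\lambda_+$ and $\lambda_-$ in the two-dimensional space $\mathcal{H}(\Sigma)$.

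There is no real obstacle here — the result is essentially a corollary of the matrix computation already performed for \Cref{T225} together with \Cref{T224}; the only point that requires a small amount of care is the sign bookkeeping in expressing the $\lambda_-$-eigenvector in a form that avoids a denominator, which is handled cleanly via the determinant identity $\lambda_+\lambda_-=-\tfrac{1}{4}$.
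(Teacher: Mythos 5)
Your proposal is correct and follows essentially the same route as the paper: restrict $\pi\circ\operatorname{BS}_{\Sigma}$ to $\mathcal{H}(\Sigma)$, use the matrix $\mathcal{M}=\begin{pmatrix}\mathcal{H}(\gamma) & 1/2\\ 1/2 & 0\end{pmatrix}$ already obtained in the proof of \Cref{T225}, and diagonalise. The paper merely asserts that the eigenvalue/eigenvector computation is a standard exercise, whereas you carry it out explicitly (including the clean handling of the $\lambda_-$-eigenvector via $\lambda_+\lambda_-=-\tfrac14$), which checks out.
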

\begin{proof}[Proof of \Cref{CP1}]
	We recall that we have shown in the proof of \Cref{T225} that we may restrict attention to the operator $(\pi\circ\operatorname{BS}_{\Sigma})|_{\mathcal{H}(\Sigma)}$ which maps into $\mathcal{H}(\Sigma)$ and has the following matrix representation with respect to the basis $\mathcal{B}:=\{\gamma,\tilde{\gamma}\}$
	\begin{gather}
		\nonumber
		\mathcal{M}:=(\pi\circ\operatorname{BS}_{\Sigma})^{\mathcal{B}}_{\mathcal{B}}=\begin{pmatrix}
			\mathcal{H}(\gamma) & \frac{1}{2} \\
			\frac{1}{2} & 0
		\end{pmatrix}.
	\end{gather}
	Computing the eigenvalues and eigenvectors of this matrix is a standard exercise and yields the result upon recalling that $\Lambda_+(\Sigma)=\lambda_+(\Sigma)$ and $\Lambda_-(\Sigma)=-\lambda_-(\Sigma)$ where $\lambda_+(\Sigma),\lambda_-(\Sigma)$ denote the positive and negative eigenvalue of $\mathcal{M}$ respectively and that the eigenvectors of $\mathcal{M}$ precisely coincide with the vector fields $v_{\pm}$ of interest.
\end{proof}
\begin{rem}
	We note that we have already argued in the proof of \Cref{C226} that $\mathcal{H}(\gamma)=0$ whenever $\Sigma$ is a rotationally symmetric $C^{1,1}$-torus. In addition, upon applying isometries and therefore assuming that the $z$-axis is the symmetry axis, we had found that $\gamma=\Gamma|_{\Sigma}$ where $\Gamma=\frac{Y}{|Y|^2}$ and $Y(x,y,z)=(-y,x,0)^{\operatorname{tr}}$ it the vector field inducing rotations around the $z$-axis. Hence \Cref{CP1} tells us that in this case $\Lambda_{\pm}(\Sigma)=\frac{1}{2}$ and that $E_{\pm}(\Sigma)=\left\{\gamma\pm\tilde{\gamma}\right\}$ where $\gamma$ is explicitly known and $\tilde{\gamma}=\gamma\times \mathcal{N}$ is determined by the outward unit normal field of $\Sigma$ which can for instance be obtained from a globally defining function of $\Sigma$.
\end{rem}
The downside of \Cref{CP1} is that we still need to compute $\mathcal{H}(\gamma)$ which is again a non-trivial task. The following proposition reduces the problem to compute certain (simple) line and surface integrals.
\begin{prop}
	\label{CP2}
	Let $T^2\cong \Sigma\subset\mathbb{R}^3$ bound a $C^{1,1}$-solid torus $\Omega\subset\mathbb{R}^3$ and suppose that $\sigma_t$ is a toroidal curve which bounds a bounded $C^{1,1}$-surface $\mathcal{A}$ outside of $\Omega$, i.e. $\mathcal{A}\subset \mathbb{R}^3\setminus \overline{\Omega}$ and $\partial\mathcal{A}=\sigma_t$. Let $\Gamma\in \mathcal{H}_N(\Omega)\setminus \{0\}$ be fixed and let $\gamma$ denote the $L^2(\Sigma)$-orthogonal projection of $\Gamma|_{\Sigma}$ onto $\mathcal{H}(\Sigma)$. Further let $\tilde{\gamma}:=\gamma\times \mathcal{N}$. Then we have
	\begin{gather}
		\nonumber
		\mathcal{H}(\gamma)=-\frac{\int_{\sigma_t}\tilde{\gamma}}{\int_{\sigma_t}\gamma}\|\gamma\|^2_{L^2(\Sigma)}.
	\end{gather}
\end{prop}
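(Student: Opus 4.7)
My plan is to exploit the hypothesis that $\sigma_t$ bounds a surface $\mathcal{A}\subset \mathbb{R}^3\setminus\overline{\Omega}$ in order to construct an \emph{exterior} harmonic Neumann field on $\Omega':=\mathbb{R}^3\setminus\overline{\Omega}$ and reduce $\mathcal{H}(\gamma)$ to the exterior analog of \Cref{L31}. Since the class $[\sigma_t]$ becomes null-homologous in $\Omega'$, the space of curl-free, divergence-free $H^1_{\text{loc}}$-vector fields on $\Omega'$ that are tangent to $\Sigma$ and decay at infinity is one-dimensional; fix a non-zero element $\Gamma'$ of it. Hodge decomposing $\Gamma'|_{\Sigma}=\nabla_{\Sigma}f'+\gamma'$ with $\gamma'\in \mathcal{H}(\Sigma)$, Stokes' theorem on $\mathcal{A}$ combined with $\operatorname{curl}(\Gamma')=0$ on $\Omega'$ gives $\int_{\sigma_t}\Gamma'=0$ and therefore $\int_{\sigma_t}\gamma'=0$.

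Next I would establish the exterior analog of \Cref{L31}, namely $\mathcal{H}(\gamma'\times \mathcal{N})=0$, by transcribing the argument used in Step~1 of the proof of \Cref{T25} and in \Cref{L31} but approaching $y\in \Sigma$ from the \emph{outside} and extending $\gamma'$ as $\Gamma'-\nabla\tilde{f'}$, where $\tilde{f'}$ is a harmonic Dirichlet extension of $f'$ to $\Omega'$. Two sign flips occur compared to the interior case: the jump formula for the double-layer potential from the exterior produces $+\tfrac{4\pi}{2}\gamma'(y)$ rather than $-\tfrac{4\pi}{2}\gamma'(y)$, and the representation formula on $\Omega'$ carries the opposite outward normal, producing $-4\pi\tilde{v}'(y_n)$. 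These two sign flips cancel in the final expression and yield, exactly as in \Cref{L31}, $2\mathcal{H}(\gamma'\times\mathcal{N})=\pm \int_{\Sigma}(\gamma'\times \mathcal{N})\cdot \gamma'\,d\sigma=0$ by pointwise orthogonality.

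The rest is linear algebra on the two-dimensional space $\mathcal{H}(\Sigma)$. Expand $\gamma'=a\gamma+b\tilde{\gamma}$; using $\tilde{\gamma}\times \mathcal{N}=-\gamma$ one computes $\gamma'\times \mathcal{N}=a\tilde{\gamma}-b\gamma$, so
\[
0=\mathcal{H}(\gamma'\times \mathcal{N})=a^2\mathcal{H}(\tilde{\gamma})-2ab\,\mathcal{H}_c(\gamma,\tilde{\gamma})+b^2\mathcal{H}(\gamma)=b^2\mathcal{H}(\gamma)-ab\,\|\gamma\|^2_{L^2(\Sigma)},
\]
where I invoked $\mathcal{H}(\tilde{\gamma})=0$ from \Cref{L31} and $2\mathcal{H}_c(\gamma,\tilde{\gamma})=\|\gamma\|^2_{L^2(\Sigma)}$ from Step~1 of the proof of \Cref{T25}. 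The period constraint $\int_{\sigma_t}\gamma'=0$ reads $a\int_{\sigma_t}\gamma+b\int_{\sigma_t}\tilde{\gamma}=0$, i.e.\ $a/b=-\int_{\sigma_t}\tilde{\gamma}/\int_{\sigma_t}\gamma$ when $b\neq 0$, and substituting into the displayed identity yields the claim. The degenerate case $b=0$ forces $\int_{\sigma_t}\tilde{\gamma}=0$ and then $a=0$ would contradict $\gamma'\neq 0$, so one has $a=0$, giving $\gamma'\propto \tilde{\gamma}$, in which case the relation reduces to $0=b^2\mathcal{H}(\gamma)$, forcing $\mathcal{H}(\gamma)=0$ and matching the right hand side, which is also zero.

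The main obstacle is making the exterior step genuinely rigorous: one must show that $\Gamma'$ and $\tilde{f'}$ decay sufficiently at infinity so that all surface integrals involving the Biot-Savart kernel converge, that the representation formula on the unbounded domain $\Omega'$ produces no contribution from the sphere at infinity, and that the Hodge decomposition $\Gamma'|_\Sigma=\nabla_{\Sigma}f'+\gamma'$ together with the construction of $\tilde{f'}$ has enough boundary regularity ($\bigcap_{1\leq p<\infty}W^{1,p}$) for the jump formulae and Gauss' theorem to apply verbatim. Standard exterior elliptic regularity, for instance via a Kelvin transform to reduce to an interior problem on the one-point compactification, should provide all of this, but it is the only place where the argument is not a direct transcription of its interior counterpart.
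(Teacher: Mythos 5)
Your proof takes a genuinely different route from the paper's, and in outline it works. The paper never introduces an exterior harmonic Neumann field: instead it reruns the boundary-layer computation of Step 1 of the proof of \Cref{T25} for $\mathcal{H}(\gamma)=\mathcal{H}_c(\mathcal{N}\times\tilde{\gamma},\gamma)$ to obtain the explicit identity $\mathcal{H}(\gamma)=\frac{\|\gamma\|^2_{L^2(\Sigma)}}{\|\Gamma\|^2_{L^2(\Omega)}}\int_{\Sigma}\gamma\cdot\operatorname{BS}_{\Omega}(\Gamma)\,d\sigma$, and then uses $\operatorname{BS}_{\Omega}(\Gamma)$ itself as the ``exterior curl-free field'': since $\operatorname{curl}(\operatorname{BS}_{\Omega}(\Gamma))=0$ on $\mathbb{R}^3\setminus\overline{\Omega}$, Stokes' theorem over $\mathcal{A}$ gives $\int_{\sigma_t}\operatorname{BS}_{\Omega}(\Gamma)=0$, and expanding $\pi_{\mathcal{H}(\Sigma)}\left(\left(\operatorname{BS}_{\Omega}(\Gamma)\right)^\parallel\right)$ in the basis $\{\gamma,\tilde{\gamma}\}$ (its $\tilde{\gamma}$-coefficient is computed to be $\|\Gamma\|^2_{L^2(\Omega)}$ via the identity $\operatorname{div}(X\times Y)=\operatorname{curl}(X)\cdot Y-X\cdot\operatorname{curl}(Y)$) yields the formula. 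In your language, $\pi_{\mathcal{H}(\Sigma)}\left(\left(\operatorname{BS}_{\Omega}(\Gamma)\right)^\parallel\right)$ is precisely a multiple of your $\gamma'$, and the explicit coefficient computation replaces your exterior analogue of \Cref{L31}. What your route buys is conceptual symmetry between interior and exterior harmonic Neumann fields; what it costs is exactly the machinery you flag as deferred: existence, decay and $W^{1,p}$ boundary regularity of exterior harmonic Neumann fields, the exterior Dirichlet extension, the vanishing of the boundary contribution at infinity in the representation formula, and the verification that $\gamma'\neq 0$ --- none of which is needed for $\operatorname{BS}_{\Omega}(\Gamma)$, whose decay and normalisation come for free. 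Two small repairs: your degenerate-case discussion swaps $a$ and $b$ --- the case that threatens the division is $b=0$, and it simply cannot occur, since then $a\int_{\sigma_t}\gamma=0$ together with $\int_{\sigma_t}\gamma\neq 0$ would force $\gamma'=0$; and you should record explicitly that $\int_{\sigma_t}\gamma\neq 0$ (this is established in the proof of \Cref{T213}: otherwise all periods of $\gamma$ vanish and $\gamma$ would be an exact, hence zero, field), since both your constraint $a/b=-\int_{\sigma_t}\tilde{\gamma}/\int_{\sigma_t}\gamma$ and the denominator in the statement rely on it.
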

\begin{proof}[Proof of \Cref{CP2}]
	We follow the ideas of the proof of \Cref{T25}. We observe that we want to compute $4\pi\mathcal{H}(\gamma)=4\pi\mathcal{H}_c(\gamma,\gamma)$. We recall that $\tilde{\gamma}=\gamma\times \mathcal{N}$ and consequently $\gamma=\mathcal{N}\times \tilde{\gamma}$. We can then follow the reasoning of Step 1 in the proof of \Cref{T25} where we computed $\mathcal{H}_c(\tilde{\gamma},\gamma)$. We observe that since $\gamma=-(\tilde{\gamma}\times \mathcal{N})$ we will get an additional minus sign in the final expression. Apart from that only $\gamma(x)$ has to be replaced by $\tilde{\gamma}(x)$ throughout the computations which eventually leads us to the following identity
	\begin{gather}
		\nonumber
		4\pi\mathcal{H}(\gamma)=-\int_{\Sigma}\gamma(y)\cdot \left[\frac{4\pi\tilde{\gamma}(y)}{2}+\int_{\Omega}\operatorname{div}(\tilde{v})(x)\frac{x-y}{|x-y|^3}d^3x+\int_{\Omega}\operatorname{curl}(\tilde{v})(x)\times \frac{x-y}{|x-y|^3}d^3x\right]d\sigma(y)
		\\
		\nonumber
		=-\int_{\Sigma}\gamma(y)\cdot \left[\int_{\Omega}\operatorname{div}(\tilde{v})(x)\frac{x-y}{|x-y|^3}d^3x+\int_{\Omega}\operatorname{curl}(\tilde{v})(x)\times \frac{x-y}{|x-y|^3}d^3x\right]d\sigma(y)
	\end{gather}
	where we used that $\gamma$ and $\tilde{\gamma}$ are perpendicular and where $\tilde{v}$ is any $H^1$-vector field on $\Omega$ such that $\tilde{v}^\parallel=\tilde{\gamma}$. We note that if $\mathcal{N}\times \tilde{v}=\gamma$, then due to the relation of $\gamma$ and $\tilde{\gamma}$ this implies $\tilde{v}^\parallel=\tilde{\gamma}$. It then follows from \cite[Lemma 5.4]{G24} and its proof and the regularity of $\gamma$ that there exists some $\tilde{v}\in W^{1,2}\mathcal{V}(\Omega)$ satisfying $\mathcal{N}\times\tilde{v}=\gamma$, $\operatorname{div}(\tilde{v})=0$ in $\Omega$ and $\operatorname{curl}(\tilde{v})=\widetilde{\Gamma}$ where $\widetilde{\Gamma}\in \mathcal{H}_N(\Omega)$ satisfies $\|\widetilde{\Gamma}\|^2_{L^2(\Omega)}=\int_{\Sigma}\gamma\cdot \widetilde{\Gamma}d\sigma$. We note that in particular $\int_{\Sigma}\gamma\cdot \widetilde{\Gamma}d\sigma\geq 0$ and since $\int_{\Sigma}\gamma\cdot \Gamma d\sigma=\|\gamma\|^2_{L^2(\Sigma)}>0$ we must have, since $\mathcal{H}_N(\Omega)$ is spanned by $\Gamma$, $\widetilde{\Gamma}=\frac{\|\widetilde{\Gamma}\|_{L^2(\Omega)}}{\|\Gamma\|_{L^2(\Omega)}}\Gamma$ from which we conclude $\|\widetilde{\Gamma}\|_{L^2(\Omega)}=\frac{\int_{\Sigma}\gamma\cdot \widetilde{\Gamma}d\sigma}{\|\widetilde{\Gamma}\|_{L^2(\Omega)}}=\frac{\int_{\Sigma}\gamma\cdot \Gamma d\sigma}{\|\Gamma\|_{L^2(\Omega)}}$. We hence arrive at $\operatorname{curl}(\tilde{v})=\frac{\int_{\Sigma}\gamma\cdot \Gamma d\sigma}{\|\Gamma\|^2_{L^2(\Omega)}}\Gamma=\frac{\|\gamma\|^2_{L^2(\Sigma)}}{\|\Gamma\|^2_{L^2(\Omega)}}\Gamma$ where we used that $\gamma$ is the $L^2(\Sigma)$-orthogonal projection of $\Gamma|_{\Sigma}$ onto $\mathcal{H}(\Sigma)$.
	
	Combining these considerations, we arrive at
	\begin{AppA}
	\label{EquationC1}
	\mathcal{H}(\gamma)=\frac{\|\gamma\|^2_{L^2(\Sigma)}}{\|\Gamma\|^2_{L^2(\Omega)}}\int_{\Sigma}\gamma(y)\cdot \operatorname{BS}_{\Omega}(\Gamma)(y)d\sigma(y)
\end{AppA}
	where $\operatorname{BS}_{\Omega}(\Gamma)(y)=\frac{1}{4\pi}\int_{\Omega}\Gamma(x)\times \frac{y-x}{|y-x|^3}d^3x$ denotes the volume Biot-Savart operator.
	
	Since $\Gamma$ is div-free and tangent to the boundary of $\Omega$ it is standard, \cite[Proposition 1]{CDG01}, that $\operatorname{curl}(\operatorname{BS}_{\Omega}(\Gamma))=\Gamma$ in $\Omega$ and $\operatorname{curl}(\operatorname{BS}_{\Omega}(\Gamma))=0$ outside of $\Omega$. We further observe that since $\operatorname{curl}(\operatorname{BS}_{\Omega}(\Gamma))=\Gamma\parallel \Sigma$, the tangent part of the restriction of $\operatorname{BS}_{\Omega}(\Gamma)$ to $\Sigma$ is curl-free. In particular, according to the Hodge decomposition theorem it follows that the $L^2(\Sigma)$-orthogonal projection of $\left(\operatorname{BS}_{\Omega}(\Gamma)\right)^\parallel$ onto $\mathcal{H}(\Sigma)$ differs from the original tangential field $\left(\operatorname{BS}_{\Omega}(\Gamma)\right)^\parallel$ only by a gradient field. Consequently, if we let $\sigma_t$ denote the toroidal curve bounding a surface $\mathcal{A}$ outside of $\Omega$, we obtain, by virtue of Stokes' theorem,
	\begin{AppA}
		\label{EquationC2}
		\int_{\sigma_t}\pi_{\mathcal{H}(\Sigma)}\left(\left(\operatorname{BS}_{\Omega}(\Gamma)\right)^\parallel\right)=\int_{\sigma_t}\operatorname{BS}_{\Omega}(\Gamma)=\int_{\mathcal{A}}\operatorname{curl}(\operatorname{BS}_{\Omega}(\Gamma))\cdot n d\sigma=0,
	\end{AppA}
	where $\pi_{\mathcal{H}(\Sigma)}$ denote the projection onto $\mathcal{H}(\Sigma)$, where we used that $\sigma_t$ is tangent to $\Sigma$, where $n$ denotes the corresponding unit normal and where we used that $\operatorname{curl}(\operatorname{BS}_{\Omega}(\Gamma))=0$ on $\mathbb{R}^3\setminus \overline{\Omega}$.
	
	Due to the scaling properties of helicity we may now without loss of generality assume that $\gamma$ (and therefore $\tilde{\gamma}$) are $L^2(\Sigma)$-normalised and hence form an $L^2(\Sigma)$-orthonormal basis of $\mathcal{H}(\Sigma)$. We can then express
	\begin{AppA}
		\label{EquationC3}
		\pi_{\mathcal{H}(\Sigma)}\left(\left(\operatorname{BS}_{\Omega}(\Gamma)\right)^\parallel\right)=\int_{\Sigma}\gamma\cdot \operatorname{BS}_{\Omega}(\Gamma)d\sigma\gamma+\int_{\Sigma}\tilde{\gamma}\cdot \operatorname{BS}_{\Omega}(\gamma)d\sigma\tilde{\gamma}
	\end{AppA}
	where we used that $\gamma$ and $\tilde{\gamma}$ are tangent to $\Sigma$. Combining (\ref{EquationC1}),(\ref{EquationC2}) and (\ref{EquationC3}) (keeping in mind that we assume $\gamma$ to be normalised) we arrive at
	\begin{AppA}
		\label{EquationC4}
		0=\int_{\sigma_t}\pi_{\mathcal{H}(\Sigma)}\left(\left(\operatorname{BS}_{\Omega}(\Gamma)\right)^\parallel\right)=\|\Gamma\|^2_{L^2(\Omega)}\mathcal{H}(\gamma)\int_{\sigma_t}\gamma+\int_{\Sigma}\tilde{\gamma}\cdot \operatorname{BS}_{\Omega}(\Gamma)d\sigma\int_{\sigma_t}\tilde{\gamma}.
	\end{AppA}
	Finally we recall that $\left(\operatorname{BS}_{\Omega}(\Gamma)\right)^\parallel$ is curl-free in the weak sense on $\Sigma$ and that so is $\Gamma|_{\Sigma}$ because $\operatorname{curl}(\operatorname{BS}_{\Omega}(\Gamma))=\Gamma$ and $\operatorname{curl}(\Gamma)=0$ in $\Omega$ are both tangent to $\Sigma$. Accordingly $\Gamma\times \mathcal{N}$ and $\gamma\times\mathcal{N}=\tilde{\gamma}$ differ only by a co-exact vector field and keeping in mind that $\left(\operatorname{BS}_{\Omega}(\Gamma)\right)^\parallel$ is $L^2(\Sigma)$-orthogonal to the co-exact fields (as it is weakly curl-free on $\Sigma$) we may calculate
	\begin{AppA}
		\nonumber
		\int_{\Sigma}\tilde{\gamma}\cdot\operatorname{BS}_{\Omega}(\Gamma)d\sigma=\int_{\Sigma}\tilde{\gamma}\cdot \left(\operatorname{BS}_{\Omega}(\Gamma)\right)^\parallel d\sigma=\int_{\Sigma}(\Gamma\times \mathcal{N})\cdot \left(\operatorname{BS}_{\Omega}(\Gamma)\right)^\parallel d\sigma=\int_{\Sigma}(\Gamma\times \mathcal{N})\cdot \operatorname{BS}_{\Omega}(\Gamma)d\sigma
		\end{AppA}
		\begin{AppA}
		\label{EquationCExtra5}
		=\int_{\Sigma}\left(\operatorname{BS}_{\Omega}(\Gamma)\times \Gamma\right)\cdot \mathcal{N}d\sigma=\int_{\Omega}\operatorname{div}\left(\operatorname{BS}_{\Omega}(\Gamma)\times \Gamma\right)d^3x=\|\Gamma\|^2_{L^2(\Omega)},
	\end{AppA}
	where we used the cyclic properties of the Euclidean product, that $\tilde{\gamma}$ it tangent to $\Sigma$ and the standard calculus formula $\operatorname{div}(X\times Y)=\operatorname{curl}(X)\cdot Y-X\cdot \operatorname{curl}(Y)$ for arbitrary $C^1$-vector fields $X,Y$ on $\Omega$. We can insert this identity in (\ref{EquationC4}) and finally arrive at
	\begin{gather}
		\nonumber
		0=\|\Gamma\|^2_{L^2(\Omega)}\left(\mathcal{H}(\gamma)\int_{\sigma_t}\gamma+\int_{\sigma_t}\tilde{\gamma}\right)
	\end{gather}
	which immediately implies the claimed identity.
\end{proof}
The following final result allows us to express the helicity of a vector field through $\Gamma$ alone.
\begin{prop}
	\label{PropProp}
	Let $T^2\cong \Sigma\subset\mathbb{R}^3$ bound a $C^{1,1}$-solid torus $\Omega\subset\mathbb{R}^3$. Suppose that $\sigma_p$ is a purely poloidal curve which bounds a $C^{1,1}$-disc $D\subset\Omega$ and suppose that $\sigma_t$ is a toroidal curve bounding a $C^{1,1}$-surface $\mathcal{A}\subset \overline{\Omega}^c$. Let further $\Gamma\in \mathcal{H}_N(\Omega)\setminus \{0\}$ and let $\gamma$ denote the projection of $\Gamma|_{\Sigma}$ onto $\mathcal{H}(\Sigma)$ and set $\tilde{\gamma}:=\gamma\times \mathcal{N}$. Then
	\begin{AppA}
		\label{EquationC5}
		\int_{\sigma_t}\gamma=\int_{\sigma_t}\Gamma\text{, }\int_{\sigma_p}\tilde{\gamma}=\frac{\operatorname{Flux}(\Gamma)}{\|\Gamma\|^2_{L^2(\Omega)}}\|\gamma\|^2_{L^2(\Sigma)}
	\end{AppA}
	where $\operatorname{Flux}(\Gamma):=\int_D\Gamma\cdot n d\sigma$ and $n$ is the unit normal on the disc $D$ compatible with a chosen orientation of $\sigma_p$ with regards to Stokes' theorem. In particular, if $v\in C^{0,1}\mathcal{V}_0(\Sigma)$, then
	\begin{AppA}
		\label{EquationC6}
		\frac{\mathcal{H}(v)}{|\Sigma|^2}=\overline{Q}(v)\overline{P}(v)\frac{\left(\int_{\sigma_t}\Gamma\right)\operatorname{Flux}(\Gamma)}{\|\Gamma\|^2_{L^2(\Omega)}}=\frac{\int_{\Sigma}v\cdot \Gamma d\sigma\int_{\Sigma}v\cdot \operatorname{BS}_{\Omega}(\Gamma)d\sigma}{\|\Gamma\|^2_{L^2(\Omega)}|\Sigma|^2}.
	\end{AppA}
\end{prop}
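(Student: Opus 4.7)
The plan is to deduce both identities in (\ref{EquationC5}) and then combine them with \Cref{T213} and \Cref{L212} to obtain (\ref{EquationC6}). The first identity $\int_{\sigma_t}\gamma = \int_{\sigma_t}\Gamma$ follows immediately from the Hodge decomposition $\Gamma|_{\Sigma} = \gamma + \nabla_{\Sigma}f$ used throughout the preceding proofs, since the line integral of a gradient over a closed loop vanishes.

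For the second identity in (\ref{EquationC5}), I introduce the vector potential $A := \operatorname{BS}_{\Omega}(\Gamma)$ of $\Gamma$, which satisfies $\operatorname{curl}(A) = \Gamma$ inside $\Omega$ and $\operatorname{curl}(A) = 0$ outside. Applying Stokes' theorem to the disc $D$ yields $\operatorname{Flux}(\Gamma) = \int_{\sigma_p}A$. Since $\operatorname{curl}(A) = \Gamma$ is tangent to $\Sigma$, the restriction $A^{\parallel}$ is weakly curl-free on $\Sigma$, and the Hodge decomposition gives $A^{\parallel} = \nabla_{\Sigma}h + \alpha\gamma + \beta\tilde{\gamma}$ for some $h \in H^1(\Sigma)$ and constants $\alpha,\beta \in \mathbb{R}$. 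Using that $\int_{\sigma_p}\gamma = 0$ (established in the proof of \Cref{T213} via Stokes applied to curl-free $\Gamma$), one reads off $\operatorname{Flux}(\Gamma) = \beta \int_{\sigma_p}\tilde{\gamma}$. The coefficient $\beta$ is then identified by pairing $A^{\parallel}$ with $\tilde{\gamma}$ in $L^2(\Sigma)$: the already-established identity (\ref{EquationCExtra5}) from the proof of \Cref{CP2}, namely $\int_{\Sigma}\tilde{\gamma}\cdot A\, d\sigma = \|\Gamma\|^2_{L^2(\Omega)}$, together with the $L^2(\Sigma)$-orthogonality of the Hodge decomposition and $\|\tilde{\gamma}\|_{L^2(\Sigma)} = \|\gamma\|_{L^2(\Sigma)}$, forces $\beta = \|\Gamma\|^2_{L^2(\Omega)}/\|\gamma\|^2_{L^2(\Sigma)}$, which inverts to the claimed formula for $\int_{\sigma_p}\tilde{\gamma}$.

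For the first equality in (\ref{EquationC6}), I rescale $\Gamma$ by $\|\gamma\|_{L^2(\Sigma)}$ so that the associated harmonic field is normalised, apply the ``in particular'' part of \Cref{T213}, and substitute (\ref{EquationC5}); the factors $\|\gamma\|^2_{L^2(\Sigma)}$ then cancel. For the last equality, the $\overline{Q}(v)$-factor is handled directly via \Cref{L212} and $\gamma_t = \gamma/\int_{\sigma_t}\gamma$ (from the proof of \Cref{T213}), combined with $\int_{\Sigma}v\cdot\Gamma\, d\sigma = \int_{\Sigma}v\cdot\gamma\, d\sigma$ (since divergence-free $v$ is $L^2(\Sigma)$-orthogonal to gradients); this gives $\overline{Q}(v)\int_{\sigma_t}\Gamma = \int_{\Sigma}v\cdot\Gamma\, d\sigma/|\Sigma|$. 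For the $\overline{P}(v)$-factor, I further use that $\sigma_t$ bounds $\mathcal{A}$ outside $\Omega$: Stokes then yields $0 = \int_{\sigma_t}A = \alpha\int_{\sigma_t}\gamma + \beta\int_{\sigma_t}\tilde{\gamma}$. Combining this with the explicit expression $\gamma_p = \mu\gamma + \lambda\tilde{\gamma}$ from the proof of \Cref{T213} (with $\lambda = 1/\int_{\sigma_p}\tilde{\gamma}$ and $\mu = -\int_{\sigma_t}\tilde{\gamma}/(\int_{\sigma_p}\tilde{\gamma}\int_{\sigma_t}\gamma)$) and with $\operatorname{Flux}(\Gamma) = \beta\int_{\sigma_p}\tilde{\gamma}$ yields the matching identities $\operatorname{Flux}(\Gamma)\mu = \alpha$ and $\operatorname{Flux}(\Gamma)\lambda = \beta$. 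Hence $A^{\parallel} - \operatorname{Flux}(\Gamma)\gamma_p = \nabla_{\Sigma}h$, and pairing with $v$ (again $L^2(\Sigma)$-orthogonal to gradients) delivers $|\Sigma|\overline{P}(v)\operatorname{Flux}(\Gamma) = \int_{\Sigma}v\cdot A\, d\sigma$, as required.

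The main obstacle I anticipate is this last algebraic alignment: recognising that the two Stokes constraints (on $D$ and on $\mathcal{A}$) together with the explicit formulae for $\mu,\lambda$ in the proof of \Cref{T213} are exactly what is needed to identify $A^{\parallel}$ with $\operatorname{Flux}(\Gamma)\gamma_p$ modulo a gradient on $\Sigma$. All other ingredients—Stokes on $D$ and on $\mathcal{A}$, the $L^2(\Sigma)$-Hodge decomposition, the identity $\int_{\Sigma}\tilde{\gamma}\cdot A\, d\sigma = \|\Gamma\|^2_{L^2(\Omega)}$ from the proof of \Cref{CP2}, and \Cref{L212}—have already been established in the excerpt, so no additional analytic work should be required.
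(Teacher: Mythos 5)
Your proposal is correct and follows essentially the same route as the paper's own proof: Stokes' theorem on $D$ and on $\mathcal{A}$, the Hodge decomposition of $\left(\operatorname{BS}_{\Omega}(\Gamma)\right)^{\parallel}$ in the basis $\{\gamma,\tilde{\gamma}\}$, the identity $\int_{\Sigma}\tilde{\gamma}\cdot \operatorname{BS}_{\Omega}(\Gamma)\,d\sigma=\|\Gamma\|^2_{L^2(\Omega)}$ from the proof of \Cref{CP2}, and the identification $\operatorname{Flux}(\Gamma)\gamma_p=\pi_{\mathcal{H}(\Sigma)}\bigl(\left(\operatorname{BS}_{\Omega}(\Gamma)\right)^{\parallel}\bigr)$ combined with \Cref{L212} and \Cref{T213}. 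The only difference is cosmetic: you keep $\|\gamma\|_{L^2(\Sigma)}$ explicit and match coefficients $\alpha,\beta$ against $\mu,\lambda$ by hand, whereas the paper normalises $\gamma$ and invokes the defining properties of $\gamma_p$ directly.
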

\begin{proof}[Proof of \Cref{PropProp}]
	The first identity in (\ref{EquationC5}) follows from the fact that $\Gamma|_{\Sigma}$ is curl-free in the weak sense and therefore $\Gamma|_{\Sigma}$ and $\gamma$ differ only by a gradient field, so that the fact that $\sigma_t$ is a closed curve yields the result. We assume now without loss of generality that $\gamma$ is $L^2(\Sigma)$-normalised. Then for the second identity in (\ref{EquationC5}) we may utilise (\ref{EquationC3}),(\ref{EquationCExtra5}) and the fact that $\int_{\sigma_p}\gamma=0$ to obtain
	\begin{gather}
		\nonumber
		\operatorname{Flux}(\Gamma)=\int_D\Gamma\cdot nd\sigma=\int_{D}\operatorname{curl}(\operatorname{BS}_{\Omega}(\Gamma))\cdot nd\sigma=\int_{\sigma_p}\operatorname{BS}_{\Omega}(\Gamma)=\int_{\sigma_p}\pi_{\mathcal{H}(\Sigma)}\left(\left(\operatorname{BS}_{\Omega}(\Gamma)\right)^\parallel\right)=\|\Gamma\|^2_{L^2(\Omega)}\int_{\sigma_p}\tilde{\gamma}
	\end{gather}
	where we used that $D\subset \Omega$, $\partial D=\sigma_p$, that $\sigma_p$ is tangent to $\Sigma$ and that $\left(\operatorname{BS}_{\Omega}(\Gamma)\right)^\parallel$ is curl-free in the weak sense.
	
	The first equality in (\ref{EquationC6}) is a direct consequence of (\ref{EquationC5}) and \Cref{T213}. As for the second identity we recall that we have argued on several occasions that $\left(\int_{\sigma_t}\gamma\right) \gamma_t=\gamma$ and we have seen that $\int_{\sigma_p}\pi_{\mathcal{H}(\Sigma)}\left(\left(\operatorname{BS}_{\Omega}(\Gamma)\right)^\parallel\right)=\operatorname{Flux}(\Gamma)$ and according to (\ref{EquationC2}) $\int_{\sigma_t}\pi_{\mathcal{H}(\Sigma)}\left(\left(\operatorname{BS}_{\Omega}(\Gamma)\right)^\parallel\right)=0$ so that by the defining properties of $\gamma_p$ we find $\operatorname{Flux}(\Gamma)\gamma_p=\pi_{\mathcal{H}(\Sigma)}\left(\left(\operatorname{BS}_{\Omega}(\Gamma)\right)^\parallel\right)$. We can then utilise \Cref{L212} and use the fact that $v$ is tangent to $\Sigma$ and div-free and that $\Gamma$ and $\gamma$ as well as $\left(\operatorname{BS}_{\Omega}(\Gamma)\right)^\parallel$ and $\pi_{\mathcal{H}(\Sigma)}\left(\left(\operatorname{BS}_{\Omega}(\Gamma)\right)^\parallel\right)$ only differ by a gradient field respectively which yields the last equality.
\end{proof}
\bibliographystyle{plain}
\bibliography{mybibfileNOHYPERLINK}
\footnotesize
\end{document}